\pgfplotsset{compat=1.18}
\newtheorem{theorem}{Theorem}[section]
\newtheorem{proposition}{Proposition}[section]
\newtheorem{corollary}{Corollary}[section]
\newtheorem{conjecture}{Conjecture}[section]
\newtheorem{lemma}{Lemma}[section]
\newtheorem{definition}{Definition}[section]
\newtheorem{example}{Example}[section]
\newtheorem{proof}{Proof}[section]
\begin{document}

%%%%%%%%%%%%%%%%%%%%%%%%%%%%%%%%%%%%%%%%%%%%%%%%%%%%%%%%%%%%%%%%%%%%%%%
\title{\Large\bf Dimension of Bi-degree $(d,d)$ Spline Spaces with the Highest Order of Smoothness over Hierarchical T-Meshes}
\author{Bingru~Huang \quad and \quad Falai Chen\thanks{Corresponding author.\\E-mail address: chenfl@ustc.edu.cn}\\\small School of Mathematical Sciences\\ \small University of Science and Technology of China\\ \small Hefei, Anhui 230026, P.R. of China}
\date{}
\maketitle
%%%%%%%%%%%%%%%%%%%%%%%%%%%%%%%%%%%%%%%%%%%%%%%%%%%%%%%%%%%%%%%%%%%%%%%%
\begin{center}
\begin{minipage}{160mm}
{\bf Abstract.} 
In this article, we study the dimension of the spline space of di-degree $(d,d)$ with the highest order of smoothness over a hierarchical T-mesh $\mathscr T$ using the smoothing cofactor-conformality method. Firstly, we obtain a dimensional formula for the conformality vector space over a tensor product T-connected component. Then, we prove that the dimension of the conformality vector space over a T-connected component of a hierarchical T-mesh under the tensor product subdivision can be calculated in a recursive manner. Combining these two aspects, we obtain a dimensional formula for the bi-degree $(d,d)$ spline space  with the highest order of smoothness over a hierarchical T-mesh $\mathscr T$ with mild assumption. Additionally, we provide a strategy to modify an arbitrary hierarchical T-mesh such that the dimension of the bi-degree $(d,d)$ spline space is stable over the modified hierarchical T-mesh. Finally, we prove that the dimension of the spline space over such a hierarchical T-mesh is the same as that of a lower-degree spline space over its CVR graph. Thus, the proposed solution can pave the way for the subsequent construction of basis functions for spline space over such a hierarchical T-mesh.
\end{minipage}
\end{center}
%%%%%%%%%%%%%%%%%%%%%%%%%%%%%%%%%%%%%%%%%%%%%%%%%%%%%%%%%%%%%%%%%%%%%%%
\section{Introduction}
Locally refinable splines have gained much attention in the past two decades due to its promising applications in iso-geometric analysis, and several types of such splines have been developed, such as hierarchical B-splines~\cite{HB1, HB2, HB3, HB4}, T-splines~\cite{ts1,ts2}, analysis-suitable T-splines~\cite{ast}, polynomial splines over hierarchical T-meshes~\cite{deng2008} (PHT-splines in short) and LR-splines~\cite{lr} etc. A survey article regarding the type of splines can be found in~\cite{li2016survey}. One fundamental challenge in understanding these locally refinable splines is understanding the piecewise polynomial spline space over T-meshes, called polynomial splines over T-meshes. In this study, we primarily study the dimensions of the polynomial splines over T-meshes.

The notion of polynomial splines over T-meshes $S_{\textbf{d}}^{\textbf{r}}(\mathscr{T})$ was first proposed in~\cite{dim2006} as a bi-degree $\textbf{d}=(d_1,d_2)$ piecewise polynomial spline space over a T-mesh $\mathscr{T}$ with a pair of smoothness orders $\textbf{r}=(r_1,r_2)$ in the horizontal and vertical directions respectively. This type of splines is defined directly from the viewpoint of spline spaces. Thus, two fundamental theoretical problems arise with polynomial splines over T-meshes: dimension calculation and basis construction. Over the past two decades, a lot of research 
has investigated these two basic issues. 

Regarding dimension calculation, the authors in~\cite{dim2006} provided a dimension formula for $2\textbf{r}+1\le\textbf{d}$ using the B-net method. Later on, a dimension formula was provided in~\cite{dim20061} using the smoothness cofactor method and the same result as~\cite{dim2006} was obtained. However, one term in the general formula was given in a non-explicit form. Subsequently, the minimal determining set method was proposed in~\cite{MDS,schumaker2012} , and the implementation of the method showed the same result as in~\cite{dim2006}. The authors in~\cite{dim2014} presented a general dimension formula and provided the upper and lower bounds of the dimension formula by the homology method. Similar to the dimension formula presented in~\cite{dim20061}, the general dimension formula cannot be used to calculate the dimensions directly because it contains a term that cannot be determined for general T-meshes. Finally, the authors in~\cite{li20063D,zeng2018} discussed the dimension formulas in three-dimensional (3D) cases.

In contrast, several examples of T-meshes, over which the dimension of spline space was unstable i.e. the dimension depends on not only the topological information of the T-mesh but also the geometric information of the T-mesh, were provided in~\cite{Ins2011,Ins2012,tc2015,tc2017,huang2024stability}. The above studies show that obtaining a general dimension formula containing only the terms associated with some topological information of the T-mesh is challenging or impossible. Furthermore, numerous researchers have studied the dimension of spline space over some particular types of T-meshes, such as weighted T-meshes~\cite{dim2014}, diagonalizable T-meshes~\cite{dim2016}, and hierarchical T-meshes~\cite{jin2013, wu2012, zeng2015}, etc. Among these special types of T-meshes, hierarchical T-meshes are the most practical.

Over the past two decades, splines hierarchical T-meshes have been successfully developed in various applications, including surface modeling~\cite{surface, li2010polynomial, wang2011parallel} ,  solving numerical PDEs, and mechanical engineering problems~\cite{tian2011, nguyen2011, vuong2011hierarchical, wang2011, xu2015two}.  With respect to the dimension calculation, the dimension formula of the PHT-splines proposed in~\cite{deng2008} is derived directly from~\cite{dim2006}. Since the smoothness order of PHT-spline is only $\textbf{r}=(1,1)$, various researchers have turned to study the spline space with a higher order smoothness over a hierarchical T-mesh. Among these studies, splines with the highest order smoothness, i.e., $\textbf{r}=(d_1-1,d_2-1)$, is the most popular, and we denote the spline space by $S_d(\mathscr T)$. Deng et al. developed a method for calculating the dimensions of spline spaces over T-meshes with a hierarchical structure, namely, the space-embedding method. The study provided the dimension formula for $S_2(\mathscr{T})$ over a hierarchical T-mesh $\mathscr{T}$. Subsequently, Wu et al. proposed a special type of hierarchical T-meshes (called {\it $(m,n)$-subdivision T-meshes}) and obtained a dimension formula
using the homology method~\cite{wu2012}. Recently, Zeng et al. 
studied the biquadratic and bicubic spline spaces over hierarchical T-meshes with few restrictions using the smoothing cofactor method. The study provided the dimension formulas for these different types of hierarchical T-meshes.

In this paper, we are interested in the dimension of the spline space $S_d(\mathscr T) $ with the highest order of smoothness over a hierarchical T-mesh $\mathscr T$ for arbitrary degree $d$. In this regard, we first propose the concept of a tensor product T-connected component. In particular, considering the tensor product T-connected component as a method of subdivision, it's a generalization of many subdivision modes. Second, the dimension formula of the conformality vector space over tensor product T-connected is developed using the smoothing cofactor method.  Subsequently, the proof that the dimension of the conformality vector space over a T-connected component of a hierarchical T-mesh under tensor product subdivision can be calculated level by level is presented. After that, we propose a special type of hierarchical T-mesh under cross-subdivision based on the concept of the tensor product T-connected component,  and the dimensions of $S_{d}(\mathscr T)$ are given over such a hierarchical T-mesh. Finally, a conjecture regarding the relationship between the hierarchical T-mesh and its crossing-vertex-relationship graph (CVR graph in shore) is proved. This study represents a generalization of the results presented in~\cite{zeng2015} and contains the results related to the dimension of hierarchical B-spline space in~\cite{HB4}. 

The main contributions of this study can be summarized as follows:
\begin{itemize}
\item[1] We propose a generalized concept called the tensor product T-connected component. A dimensional formula of the conformality vector space over the tensor product T-connected component is provided.

\item[2] We prove that the dimension of the conformality vector space over a T-connected component of a hierarchical T-mesh under the tensor product subdivision can be calculated level by level.

\item[3] A special type of hierarchical T-mesh $\mathscr T$ under cross-subdivision is proposed. We provide the dimension formula of $S_d(\mathscr T)$ and prove a conjecture regarding the relationship between the hierarchical T-mesh and its CVR graph for such T-meshes.
\end{itemize}

The remainder of this paper is as follows. In Section 2, we recall some basic notions regarding T-meshes, hierarchical T-meshes, and spline spaces over T-meshes. In Section 3, we propose the concept of a tensor product T-connected component and provide the dimensional formula of the conformality vector space over the tensor product T-connected component. Subsequently, we prove that the dimension of the conformality vector space over a T-connected component of a hierarchical T-mesh under tensor product subdivision can be calculated level by level. In Section 4, we propose a special type of hierarchical T-mesh $\mathscr{T}$ under cross-subdivision,
and presents the dimension formula of $S_d(\mathscr T)$. Subsequently, we provide a strategy to modify a hierarchical T-mesh into the special type of hierarchical T-mesh. At last, in Section 5, we prove the conjecture regarding the relationship between the hierarchical T-mesh and its CVR graph. Finally, Section 6 concludes the paper and discusses future work.

%%%%%%%%%%%%%%%%%%%%%%%%%%%%%%%%%%%%%%%%%%%%%%%%%%%%%%%%%%%%%%%%%%%%%%%%%
\section{Preliminaries}\label{sec.prelim}
\setcounter{equation}{0}
\setcounter{definition}{0}
In this section, the concept of T-meshes, hierarchical T-meshes, spline spaces over T-meshes, extended T-meshes, homogeneous boundary conditions and some preliminary knowledge about the smoothing cofactor method are proposed.

\subsection{T-meshes and hierarchical T-meshes}

\begin{definition}(\cite{huang2024stability}) Let $\mathscr{T}$ be a set of axis-aligned rectangles and the intersection of any two distinct rectangles in $\mathscr{T}$ is either empty or consists of points on the boundaries of the rectangle. Then $\mathscr{T}$ is called a \textbf{T-mesh}. Furthermore, If the entire domain occupied by $\mathscr{T}$ is a simply connected rectangle, then $\mathscr{T}$ is called a \textbf{regular T-mesh}.
\end{definition}

 In this paper, we mainly discuss regular T-meshes.
The definitions of vertex, edge and cell of a T-mesh $\mathscr{T}$ can be found in~\cite{dim2016}. Here we briefly introduce the above basic concepts by classifying them. Based on the positions, there are two types of vertices--\textbf{boundary vertices} and \textbf{interior vertices}, and two types of edges--\textbf{boundary edges} and \textbf{interior edges}, and if a cell contains at least one boundary edge as its boundary the cell is called a $\textbf{boundary cell}$, otherwise, it called an \textbf{interior cell}. The interior vertex with 3 edges connecting it is called a \textbf{T-node} and the interior vertex with 4 edges connecting it is called a \textbf{cross-vertex}.   

The longest-edge (\textbf{$l$-edge} in short) is a line segment that consists of several edges. It's the longest possible line segment whose two endpoints are T-nodes or boundary vertices. There are three types of interior $l$-edge: \textbf{cross-cut}, \textbf{ray} and \textbf{T $l$-edge}. These definitions can be found in~\cite{zeng2015, huang2024stability}.
Then one can classify the interior vertices by what types of edges intersects: \textbf{mono-vertices} and \textbf{multi-vertices}. If an interior vertex is the intersection of two T $l$-edges, then it is called a multi-vertex. Otherwise, it is called a mono-vertex~\cite{dim20061}.

The union of all the T $l$-edges including their vertices are called a \textbf{T-connected component}~\cite{dim20061}, and is denoted by $L(\mathscr T)$. A set of T $l$-edges of $L(\mathscr{T})$ including all the vertices on them are called a \textbf{ sub-component of $L(\mathscr{T})$}. Since a T-connected component can be divided into several connected components and each connected component can be deal with separately,  thus,
In the rest of this paper, without further declaration, we assume that a T-connected component is connected. 

In Figure~\ref{fig1}, a regular T-mesh $\mathscr{T}$ and its T-connected component are illustrated. Among these marked edges, $v_1v_3$ is a ray, $v_4v_5$ is a cross-cut and $v_2v_9,v_6v_8$ are two T $l$-edges. The marked interior vertices $v_1,v_2,v_6,v_8$ and $v_9$ are mono-vertices and $v_7$ is the only one multi-vertex in the T-mesh. And $v_6v_8$ together with all vertices on it is a sub-component of $L(\mathscr{T})$.
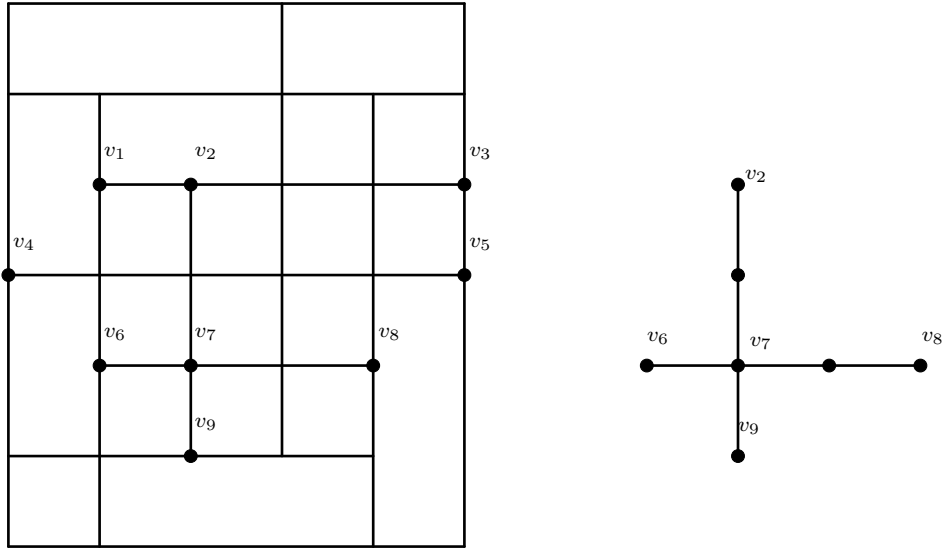
\begin{figure}
	\centering
\begin{tikzpicture}[line cap=round,line join=round,>=triangle 45,x=1.0cm,y=1.0cm,scale=1.2]
\draw [line width=1.pt] (1.,1.)-- (6.,1.);
\draw [line width=1.pt] (6.,1.)-- (6.,7.);
\draw [line width=1.pt] (6.,7.)-- (1.,7.);
\draw [line width=1.pt] (1.,7.)-- (1.,1.);
\draw [line width=1.pt] (1.,6.)-- (6.,6.);
\draw [line width=1.pt] (1.,4.)-- (6.,4.);
\draw [line width=1.pt] (2.,6.)-- (2.,1.);
\draw [line width=1.pt] (2.,5.)-- (6.,5.);
\draw [line width=1.pt] (4.,7.)-- (4.,2.);
\draw [line width=1.pt] (1.,2.)-- (5.,2.);
\draw [line width=1.pt] (5.,6.)-- (5.,1.);
\draw [line width=1.pt] (3.,5.)-- (3.,2.);
\draw [line width=1.pt] (2.,3.)-- (5.,3.);
\draw [line width=1.pt] (8.,3.)-- (11.,3.);
\draw [line width=1.pt] (9.,2.)-- (9.,5.);
\begin{scriptsize}
\draw [fill=black] (1.,4.) circle (2.0pt);
\draw[color=black] (1.1727948698137998,4.3475212856370185) node {$v_4$};
\draw [fill=black] (6.,4.) circle (2.0pt);
\draw[color=black] (6.176889747353783,4.3475212856370185) node {$v_5$};
\draw [fill=black] (2.,5.) circle (2.0pt);
\draw[color=black] (2.169987689613434,5.344714105436652) node {$v_1$};
\draw [fill=black] (6.,5.) circle (2.0pt);
\draw[color=black] (6.176889747353783,5.344714105436652) node {$v_3$};
\draw [fill=black] (3.,5.) circle (2.0pt);
\draw[color=black] (3.1671805094130683,5.344714105436652) node {$v_2$};
\draw [fill=black] (3.,2.) circle (2.0pt);
\draw[color=black] (3.1671805094130683,2.353135646037752) node {$v_9$};
\draw [fill=black] (2.,3.) circle (2.0pt);
\draw[color=black] (2.169987689613434,3.3503284658373853) node {$v_6$};
\draw [fill=black] (5.,3.) circle (2.0pt);
\draw[color=black] (5.179696927554149,3.3503284658373853) node {$v_8$};
\draw [fill=black] (3.,3.) circle (2.0pt);
\draw[color=black] (3.1671805094130683,3.3503284658373853) node {$v_7$};
\draw [fill=black] (8.,3.) circle (2.0pt);
\draw[color=black] (8.125948440598524,3.3050015194828566) node {$v_6$};
\draw [fill=black] (11.,3.) circle (2.0pt);
\draw[color=black] (11.135657678539237,3.3050015194828566) node {$v_8$};
\draw [fill=black] (9.,2.) circle (2.0pt);
\draw[color=black] (9.123141260398159,2.3078086996832234) node {$v_9$};
\draw [fill=black] (9.,5.) circle (2.0pt);
\draw[color=black] (9.2,5.1) node {$v_2$};
\draw [fill=black] (9.,3.) circle (2.0pt);
\draw [fill=black] (9.,4.) circle (2.0pt);
\draw [fill=black] (10.,3.) circle (2.0pt);
\draw[color=black] (9.25,3.25) node {$v_7$};
\end{scriptsize}
\end{tikzpicture}
\caption{\label{fig1} A T-mesh and its T-connected component}
\end{figure}

A \textbf{hierarchical T-mesh} is a special type of T-mesh that has a natural level structure~\cite{jin2013,wu2012,zeng2015}. It is defined in a recursive manner. Generally, we start from a tensor-product mesh (level $0$), then some cells of level $k$ are each divided under a subdivision mode, where the new cells, new edges and new vertices are of level $k+1$. The maximal level number that appears is called the \textbf{level} of the hierarchical T-mesh $\mathscr{T}$, and is denoted as $lev(\mathscr{T})$. For a hierarchical T-mesh $\mathscr{T}$, we use $T_k$ to denote the set of all of the level $k$ T $l$-edges and the level $k$ vertices of $\mathscr{T}$, and $N_k$ to denote the set of all the level $k+1$ to level $lev(\mathscr{T})$ T $l$-edges and the level $k+1$ to level $lev(\mathscr{T})$ vertices of $\mathscr{T}$. In this paper, we mainly discuss the cross subdivision mode which is a subdivision manner dividing a cell into $2\times2$ subcells equally.

In Figure~\ref{fig2}, a hierarchical T-mesh $\mathscr{T}$ under the cross subdivision mode and the sets $T_1$, $N_1$ are illustrated. For $\mathscr{T}$, $lev(\mathscr{T})=2$. Among these marked edges, $AB$ is an edge in level 1 and $CD$ is an edge in level 2. All vertices marked with black circle are the vertices in level 1 and marked with black $\times$ are the vertices in level 2. $T_1$ and $N_1$ are shown in the rest two sufigures.

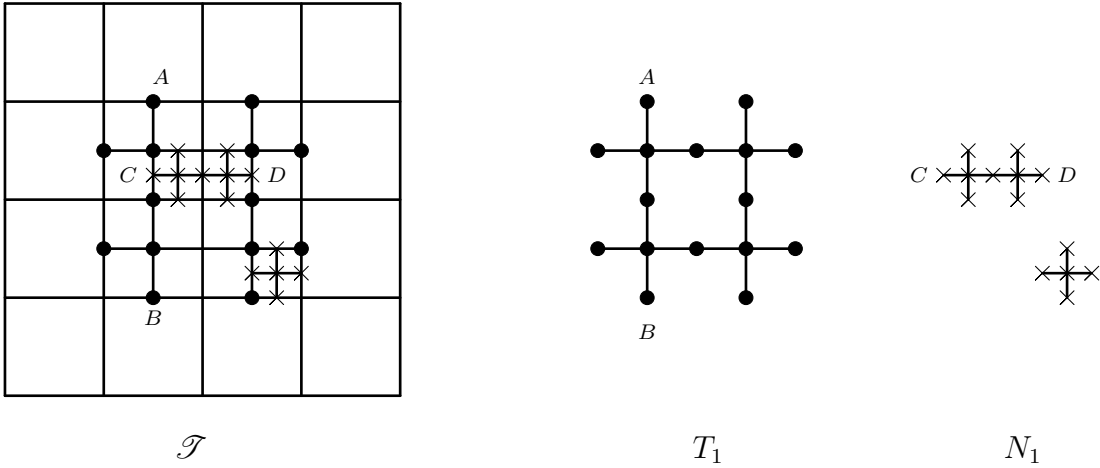
\begin{figure}
    \centering
\begin{tikzpicture}[line cap=round,line join=round,>=triangle 45,x=1.0cm,y=1.0cm,scale=1.3]
\draw [line width=1.pt] (1.,1.)-- (5.,1.);
\draw [line width=1.pt] (5.,1.)-- (5.,5.);
\draw [line width=1.pt] (5.,5.)-- (1.,5.);
\draw [line width=1.pt] (1.,5.)-- (1.,1.);
\draw [line width=1.pt] (1.,4.)-- (5.,4.);
\draw [line width=1.pt] (1.,3.)-- (5.,3.);
\draw [line width=1.pt] (1.,2.)-- (5.,2.);
\draw [line width=1.pt] (2.,1.)-- (2.,5.);
\draw [line width=1.pt] (3.,5.)-- (3.,1.);
\draw [line width=1.pt] (4.,1.)-- (4.,5.);
\draw [line width=1.pt] (2.,3.5)-- (4.,3.5);
\draw [line width=1.pt] (2.,2.5)-- (4.,2.5);
\draw [line width=1.pt] (2.5,4.)-- (2.5,2.);
\draw [line width=1.pt] (3.5,4.)-- (3.5,2.);
\draw [line width=1.pt] (3.5,2.25)-- (4.,2.25);
\draw [line width=1.pt] (3.75,2.)-- (3.75,2.5);
\draw [line width=1.pt] (2.5,3.25)-- (3.5,3.25);
\draw [line width=1.pt] (2.75,3.5)-- (2.75,3.);
\draw [line width=1.pt] (3.25,3.5)-- (3.25,3.);
\draw [line width=1.pt] (7,3.5)-- (9,3.5);
\draw [line width=1.pt] (7,2.5)-- (9,2.5);
\draw [line width=1.pt] (7.5,4)-- (7.5,2);
\draw [line width=1.pt] (8.5,2)-- (8.5,4);
\draw [line width=1.pt] (10.5,3.25)-- (11.5,3.25);
\draw [line width=1.pt] (10.75,3)-- (10.75,3.5);
\draw [line width=1.pt] (11.25,3)-- (11.25,3.5);
\draw [line width=1.pt] (11.5,2.25)-- (12,2.25);
\draw [line width=1.pt] (11.75,2)-- (11.75,2.5);
\draw (2.6,0.7) node[anchor=north west] {$\mathscr{T}$};
\draw (7.85,0.7) node[anchor=north west] {$T_1$};
\draw (11,0.7) node[anchor=north west] {$N_1$};
\begin{scriptsize}
\draw [fill=black] (7,3.5) circle (2.0pt);
\draw [fill=black] (9,3.5) circle (2.0pt);
\draw [fill=black] (7,2.5) circle (2.0pt);
\draw [fill=black] (9,2.5) circle (2.0pt);
\draw [fill=black] (7.5,4) circle (2.0pt);
\draw[color=black] (7.5,4.25) node {$A$};
\draw [fill=black] (7.5,2) circle (2.0pt);
\draw[color=black] (7.5,1.65) node {$B$};
\draw [fill=black] (8.5,2) circle (2.0pt);
\draw [fill=black] (8.5,4) circle (2.0pt);
\draw [fill=black] (7.5,3.5) circle (2.0pt);
\draw [fill=black] (8.5,3.5) circle (2.0pt);
\draw [fill=black] (7.5,2.5) circle (2.0pt);
\draw [fill=black] (7.5,3) circle (2.0pt);
\draw [fill=black] (8.5,2.5) circle (2.0pt);
\draw [fill=black] (8.5,3) circle (2.0pt);
\draw [fill=black] (8,3.5) circle (2.0pt);
\draw [fill=black] (8,2.5) circle (2.0pt);
\draw [color=black] (2.75,3.5)-- ++(-2.0pt,-2.0pt) -- ++(4.0pt,4.0pt) ++(-4.0pt,0) -- ++(4.0pt,-4.0pt);
\draw [color=black] (2.75,3.)-- ++(-2.0pt,-2.0pt) -- ++(4.0pt,4.0pt) ++(-4.0pt,0) -- ++(4.0pt,-4.0pt);
\draw [color=black] (2.5,3.25)-- ++(-2.0pt,-2.0pt) -- ++(4.0pt,4.0pt) ++(-4.0pt,0) -- ++(4.0pt,-4.0pt);
\draw[color=black] (2.25,3.25) node {$C$};
\draw [color=black] (3.5,3.25)-- ++(-2.0pt,-2.0pt) -- ++(4.0pt,4.0pt) ++(-4.0pt,0) -- ++(4.0pt,-4.0pt);
\draw[color=black] (3.75,3.25) node {$D$};
\draw [color=black] (3.25,3.5)-- ++(-2.0pt,-2.0pt) -- ++(4.0pt,4.0pt) ++(-4.0pt,0) -- ++(4.0pt,-4.0pt);
\draw [color=black] (3.25,3.)-- ++(-2.0pt,-2.0pt) -- ++(4.0pt,4.0pt) ++(-4.0pt,0) -- ++(4.0pt,-4.0pt);
\draw [color=black] (3.,3.25)-- ++(-2.0pt,-2.0pt) -- ++(4.0pt,4.0pt) ++(-4.0pt,0) -- ++(4.0pt,-4.0pt);
\draw [color=black] (3.75,2.5)-- ++(-2.0pt,-2.0pt) -- ++(4.0pt,4.0pt) ++(-4.0pt,0) -- ++(4.0pt,-4.0pt);
\draw [color=black] (3.5,2.25)-- ++(-2.0pt,-2.0pt) -- ++(4.0pt,4.0pt) ++(-4.0pt,0) -- ++(4.0pt,-4.0pt);
\draw [color=black] (3.75,2.)-- ++(-2.0pt,-2.0pt) -- ++(4.0pt,4.0pt) ++(-4.0pt,0) -- ++(4.0pt,-4.0pt);
\draw [color=black] (4.,2.25)-- ++(-2.0pt,-2.0pt) -- ++(4.0pt,4.0pt) ++(-4.0pt,0) -- ++(4.0pt,-4.0pt);
\draw [fill=black] (2.5,4.) circle (2.0pt);
\draw[color=black] (2.5791889022411962,4.251861774932848) node {$A$};
\draw [fill=black] (2.5,2.) circle (2.0pt);
\draw[color=black] (2.5,1.8) node {$B$};
\draw [fill=black] (3.5,4.) circle (2.0pt);
\draw [fill=black] (4.,3.5) circle (2.0pt);
\draw [fill=black] (2.,3.5) circle (2.0pt);
\draw [fill=black] (2.,2.5) circle (2.0pt);
\draw [fill=black] (4.,2.5) circle (2.0pt);
\draw [fill=black] (3.5,2.) circle (2.0pt);
\draw [fill=black] (2.5,3.5) circle (2.0pt);
\draw [fill=black] (3.5,3.5) circle (2.0pt);
\draw [fill=black] (3.5,3.) circle (2.0pt);
\draw [fill=black] (2.5,3.) circle (2.0pt);
\draw [fill=black] (2.5,2.5) circle (2.0pt);
\draw [fill=black] (3.5,2.5) circle (2.0pt);
\draw [color=black] (10.75,3.5)-- ++(-2.0pt,-2.0pt) -- ++(4.0pt,4.0pt) ++(-4.0pt,0) -- ++(4.0pt,-4.0pt);
\draw [color=black] (10.75,3)-- ++(-2.0pt,-2.0pt) -- ++(4.0pt,4.0pt) ++(-4.0pt,0) -- ++(4.0pt,-4.0pt);
\draw [color=black] (11.25,3.5)-- ++(-2.0pt,-2.0pt) -- ++(4.0pt,4.0pt) ++(-4.0pt,0) -- ++(4.0pt,-4.0pt);
\draw [color=black] (11.25,3)-- ++(-2.0pt,-2.0pt) -- ++(4.0pt,4.0pt) ++(-4.0pt,0) -- ++(4.0pt,-4.0pt);
\draw [color=black] (11.75,2.5)-- ++(-2.0pt,-2.0pt) -- ++(4.0pt,4.0pt) ++(-4.0pt,0) -- ++(4.0pt,-4.0pt);
\draw [color=black] (11.75,2)-- ++(-2.0pt,-2.0pt) -- ++(4.0pt,4.0pt) ++(-4.0pt,0) -- ++(4.0pt,-4.0pt);
\draw [color=black] (11.5,2.25)-- ++(-2.0pt,-2.0pt) -- ++(4.0pt,4.0pt) ++(-4.0pt,0) -- ++(4.0pt,-4.0pt);
\draw [color=black] (12,2.25)-- ++(-2.0pt,-2.0pt) -- ++(4.0pt,4.0pt) ++(-4.0pt,0) -- ++(4.0pt,-4.0pt);
\draw [color=black] (10.5,3.25)-- ++(-2.0pt,-2.0pt) -- ++(4.0pt,4.0pt) ++(-4.0pt,0) -- ++(4.0pt,-4.0pt);
\draw[color=black] (10.25,3.25) node {$C$};
\draw [color=black] (11.5,3.25)-- ++(-2.0pt,-2.0pt) -- ++(4.0pt,4.0pt) ++(-4.0pt,0) -- ++(4.0pt,-4.0pt);
\draw [color=black] (2.75,3.25)-- ++(-2.0pt,-2.0pt) -- ++(4.0pt,4.0pt) ++(-4.0pt,0) -- ++(4.0pt,-4.0pt);
\draw [color=black] (3.25,3.25)-- ++(-2.0pt,-2.0pt) -- ++(4.0pt,4.0pt) ++(-4.0pt,0) -- ++(4.0pt,-4.0pt);
\draw [color=black] (3.75,2.25)-- ++(-2.0pt,-2.0pt) -- ++(4.0pt,4.0pt) ++(-4.0pt,0) -- ++(4.0pt,-4.0pt);
\draw[color=black] (11.75,3.25) node {$D$};
`%\draw [fill=black] (2.75,3.25) circle (2.0pt);
%\draw [fill=black] (3.25,3.25) circle (2.0pt);
%\draw [fill=black] (3.75,2.25) circle (2.0pt);
\draw [color=black] (11,3.25)-- ++(-2.0pt,-2.0pt) -- ++(4.0pt,4.0pt) ++(-4.0pt,0) -- ++(4.0pt,-4.0pt);
\draw [color=black] (10.75,3.25)-- ++(-2.0pt,-2.0pt) -- ++(4.0pt,4.0pt) ++(-4.0pt,0) -- ++(4.0pt,-4.0pt);
\draw [color=black] (11.75,2.25)-- ++(-2.0pt,-2.0pt) -- ++(4.0pt,4.0pt) ++(-4.0pt,0) -- ++(4.0pt,-4.0pt);
\draw [color=black] (11.25,3.25)-- ++(-2.0pt,-2.0pt) -- ++(4.0pt,4.0pt) ++(-4.0pt,0) -- ++(4.0pt,-4.0pt);
\end{scriptsize}
\end{tikzpicture}
    \caption{ \label{fig2}A hierarchical T-mesh and T-connected component in each level.}
\end{figure}

\subsection{Spline spaces over T-meshes and homogeneous boundary conditions}

\begin{definition}(\cite{zeng2015, huang2024stability})
	Given a T-mesh $\mathscr{T}$, let $\mathscr{F}$ be the set of all the cells in $\mathscr{T}$ and $\Omega$ be the region occupied by the cells in $\mathscr{T}$, and $\textbf{d}=(d_1,d_2)$, $\textbf{r}=(r_1,r_2)$. Then the spline space of degree $\textbf{d}$ with smoothness order $\textbf{r}$ over $\mathscr{T}$ is defined as
$${S}_{\textbf{d}}^{\textbf{r}}(\mathscr{T})=\{f(x,y)\in C^{\textbf{r}}(\Omega) \big|\quad f(x,y)|_{\phi}\in { P}_{\textbf{d}},\,\, \forall\phi\in\mathscr{F}\},$$
	where ${ P}_{\textbf{d}}$ is the function space of all the polynomials with bi-degree $(d_1,d_2)$, $C^{\textbf{r}}(\Omega)$ is the space consisting of all the bivariate functions in $\Omega$ with $C^{r_1}$ continuity along the $x$-direction and with  $C^{r_2}$ continuity along the $y$-direction.
\end{definition}
If $(r_1,r_2)=(d_1-1,d_2-1)$, then the spline space is called the \textbf{highest order of smoothness spline space}. Especially, if $d_1=d_2=d$ and $r_1=r_2=d-1$, the highest order of smoothness spline space is denoted by ${S}_{d}(\mathscr{T})$. In this paper, we mainly consider the highest order of smoothness spline spaces with bi-degree $(d,d)$.

In the follow, we define two basic concepts: extended T-mesh and spline space with homogeneous boundary condition~\cite{jin2013}. These two concepts play an important role in calculating the dimension of spline space over T-mesh with hierarchical structure.

\begin{definition}(\cite{jin2013,zeng2015})
    Given a T-mesh $\mathscr{T}$, consider the spline space $S_d(\mathscr{T})$. The \textbf{extended T-mesh} is an enlarged T-mesh by copying each horizontal boundary $l$-edge and each vertical boundary $l$-edge of $\mathscr{T}$ $d$ times, and by extending all of the $l$-edges with an endpoint on the boundary of $\mathscr{T}$,
    and is denoted as $\bar{\mathscr{T}}$.
\end{definition}

Figure~\ref{fig3} illustrates an example of a T-mesh and its extended T-mesh for $S_3(\mathscr{T})$.

\begin{figure}
    \centering
    \begin{tikzpicture}[line cap=round,line join=round,>=triangle 45,x=1.0cm,y=1.0cm]
\draw [line width=1.pt] (1.,1.)-- (4.,1.);
\draw [line width=1.pt] (4.,1.)-- (4.,4.);
\draw [line width=1.pt] (4.,4.)-- (1.,4.);
\draw [line width=1.pt] (1.,4.)-- (1.,1.);
\draw [line width=1.pt] (1.,3.)-- (4.,3.);
\draw [line width=1.pt] (3.,4.)-- (3.,1.);
\draw [line width=1.pt] (2.,1.)-- (2.,3.);
\draw [line width=1.pt] (2.,2.)-- (3.,2.);
\draw [line width=1.pt] (5.4,0.4)-- (9.6,0.4);
\draw [line width=1.pt] (9.6,0.4)-- (9.6,4.6);
\draw [line width=1.pt] (9.6,4.6)-- (5.4,4.6);
\draw [line width=1.pt] (5.4,4.6)-- (5.4,0.4);
\draw [line width=1.pt] (5.4,4)-- (9.6,4);
\draw [line width=1.pt] (5.4,1)-- (9.6,1);
\draw [line width=1.pt] (6,0.4)-- (6,4.6);
\draw [line width=1.pt] (9,4.6)-- (9,0.4);
\draw [line width=1.pt] (5.4,4.4)-- (9.6,4.4);
\draw [line width=1.pt] (5.4,4.2)-- (9.6,4.2);
\draw [line width=1.pt] (5.4,0.8)-- (9.6,0.8);
\draw [line width=1.pt] (5.4,0.6)-- (9.6,0.6);
\draw [line width=1.pt] (5.6,4.6)-- (5.6,0.4);
\draw [line width=1.pt] (5.8,4.6)-- (5.8,0.4);
\draw [line width=1.pt] (9.2,4.6)-- (9.2,0.4);
\draw [line width=1.pt] (9.4,4.6)-- (9.4,0.4);
\draw [line width=1.pt] (8,4.6)-- (8,0.4);
\draw [line width=1.pt] (5.4,3)-- (9.6,3);
\draw [line width=1.pt] (7,3)-- (7,0.4);
\draw [line width=1.pt] (7,2)-- (8,2);
\draw (2.3,-0.05) node[anchor=north west] {$\mathscr{T}$};
\draw (7.3,-0.05) node[anchor=north west] {$\bar{\mathscr{T}}$};
\end{tikzpicture}
    \caption{\label{fig3}A T-mesh and its extended T-mesh for $S_3(\mathscr{T})$.}
\end{figure}
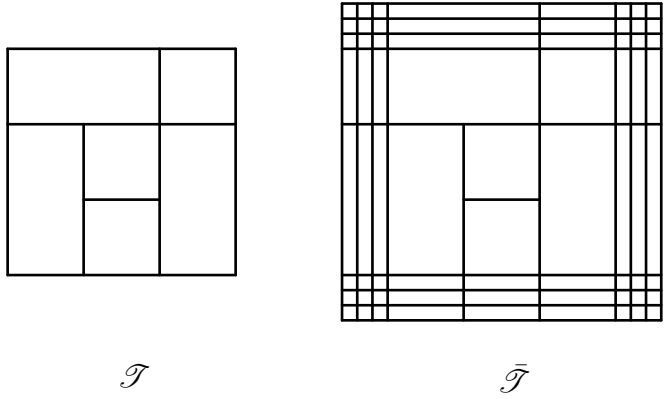

\begin{definition}
     Given a T-mesh $\mathscr{T}$, a spline space related with $S_d(\mathscr{T})$:
    $$\bar{S}_{d}(\mathscr{T})=\{f(x,y)\in C^{d-1,d-1}(\mathbb{R}^2) \big|\quad f(x,y)|_{\phi}\in { P}_{\textbf{d}},\,\, \forall\phi\in\mathscr{F},\,\,f\big|_{\mathbb{R}^2\backslash\Omega}\equiv 0\},$$
 is called \textbf{the spline space with homogeneous boundary condition}. 
\end{definition}

An important result in~\cite{jin2013} is that the spline space $S_d(\mathscr{T})$ and the spline space $\bar{S}_d(\bar{\mathscr{T}})$ have a closed connection. Specifically, we have

\begin{theorem}(\cite{jin2013})\label{thm2.1}
    Given a T-mesh $\mathscr{T}$, let $\bar{\mathscr{T}}$ be the extended T-mesh associated with $S_d(\mathscr{T})$. Then 
    $$S_d(\mathscr{T})=\bar{S}_d(\bar{\mathscr{T}})\big|_{\Omega},\quad \dim S_d(\mathscr{T})=\dim \bar{S}_d(\bar{\mathscr{T}}).$$
\end{theorem}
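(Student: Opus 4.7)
My plan is to prove both assertions simultaneously by showing that the restriction map $\pi: \bar{S}_d(\bar{\mathscr{T}}) \to S_d(\mathscr{T})$, $\bar{f} \mapsto \bar{f}|_\Omega$, is a well-defined linear bijection. Well-definedness is essentially automatic: the extension from $\mathscr{T}$ to $\bar{\mathscr{T}}$ only introduces new edges in the buffer region $\bar{\Omega}\setminus\Omega$ (the $d$ outward copies of each boundary $l$-edge, together with the outward prolongations of interior $l$-edges), so every cell of $\mathscr{T}$ remains a cell of $\bar{\mathscr{T}}$ and both the piecewise bi-degree $(d,d)$ structure and the $C^{d-1,d-1}$ smoothness pass to $\Omega$ under restriction. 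This yields the inclusion $\bar{S}_d(\bar{\mathscr{T}})|_\Omega \subseteq S_d(\mathscr{T})$ and reduces the theorem to showing that $\pi$ is both injective and surjective.

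For injectivity, I would show that any $\bar{f}$ vanishing on $\Omega$ must vanish throughout the buffer. Focus on a $d$-layer strip flanking one boundary $l$-edge of $\mathscr{T}$, say a horizontal top edge at height $y_0$, with successive copies at $y_0 < y_1 < \cdots < y_d$ where $y_d \subset \partial\bar{\Omega}$. In each of the $d$ horizontal sub-strips, $\bar{f}$ restricts to polynomials of bi-degree $(d,d)$ on the cells cut out by the upward prolongations of interior $l$-edges, and the $C^{d-1}$-matching conditions across the $d+1$ horizontal edges (vanishing traces at $y_0$ and $y_d$, polynomial-to-polynomial matches in between) form a square linear system. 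I plan to bootstrap upward: the vanishing at $y_0$ forces the bottom-strip polynomials to contain the factor $(y-y_0)^d$, so each is of the form $(y-y_0)^d Q_1(x)$ with $\deg Q_1 \le d$; the $C^{d-1}$-matches at $y_1,y_2,\ldots$ then successively determine the $Q_j$ in a triangular fashion, while the terminal vanishing conditions at $y_d$ force the whole chain to zero. An analogous argument in the orthogonal direction, together with a tensor-product treatment of the $d\times d$ blocks of tiny cells at the four corners of the buffer, disposes of the remainder.

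For surjectivity, given $f \in S_d(\mathscr{T})$ I would set $\bar{f} = f$ on $\Omega$ and $\bar{f}\equiv 0$ outside $\bar{\Omega}$, then extend $\bar{f}$ into each buffer strip as the unique solution of the same linear system as above, with the homogeneous matching data at $y_0$ replaced by the Taylor data of $f$ along the boundary edge. Since injectivity has shown that the square system has trivial kernel, the inhomogeneous version is uniquely solvable, producing an $\bar{f}$ that is piecewise polynomial of bi-degree $(d,d)$ on every cell of $\bar{\mathscr{T}}$ and $C^{d-1,d-1}$ on $\mathbb{R}^2$. The dimension equality then follows at once from bijectivity. The main technical obstacle I anticipate is the corner buffer, where each of the $d\times d$ tiny cells is simultaneously constrained by horizontal and vertical matching data on two adjacent sides; the consistency of these two sets of constraints relies crucially on the $d$-fold copy multiplicities matching the trace dimension of bi-degree $(d,d)$ polynomials, and I would handle it by exploiting the tensor-product factorisation of the corresponding small linear system.
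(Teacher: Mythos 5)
The paper offers no proof of this statement: it is imported verbatim from \cite{jin2013}, so there is nothing internal to compare against, and your proposal must be judged on its own. Your strategy --- showing that the restriction map $\pi:\bar S_d(\bar{\mathscr T})\to S_d(\mathscr T)$ is a linear bijection --- is the standard route to this result, and the outline is sound: the inclusion $\bar S_d(\bar{\mathscr T})|_\Omega\subseteq S_d(\mathscr T)$ is immediate because every cell of $\mathscr T$ survives as a cell of $\bar{\mathscr T}$, and injectivity plus surjectivity then give both displayed identities at once.

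Two steps need repair before this is a proof. First, your description of the injectivity mechanism is not quite right: the interior interfaces $y_1,\dots,y_{d-1}$ do not \emph{determine} the $Q_j$ ``in a triangular fashion'' --- each interface merely introduces a fresh cofactor $Q_{j+1}$ with no constraint at all --- and the entire chain is annihilated only by the terminal matching with $0$ above $y_d$, which reads $\sum_{j=1}^{d+1}(y-y_{j-1})^d Q_j(x)\equiv 0$ and forces every $Q_j\equiv 0$ because the $(d+1)\times(d+1)$ system is exactly the $r=d+1$ instance of (\ref{tl}), whose matrix is a nonsingular Vandermonde matrix; equivalently, a $C^{d-1}$ spline of degree $d$ supported on only $d$ knot intervals is zero. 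Second, and more seriously, your surjectivity construction solves a square linear system separately in each column of the buffer strip (between consecutive prolonged vertical $l$-edges), but you never verify that the solutions in adjacent columns meet with $C^{d-1}$ continuity in the $x$-direction; without this the extended function need not lie in $\bar S_d(\bar{\mathscr T})$. The gluing does hold: for $0\le i\le d-1$ the traces $h_i(y)=\partial_x^i(g_L-g_R)(x_*,y)$ are degree-$d$, $C^{d-1}$ splines in $y$ on $[y_0,y_d]$ that vanish to order $d$ at $y_0$ (because $f$ itself is $C^{d-1}$ in $x$ across $x=x_*$) and at $y_d$, hence vanish identically by the same support argument --- but this is a genuine step that must be written down. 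Once the four edge strips are settled, the corner blocks are easier than you anticipate for injectivity (each vertical line through a corner block is flanked by zero data at both $y_0$ and $y_d$, so the one-dimensional argument applies directly and no two-dimensional tensor-product analysis is needed there); for surjectivity your tensor-product treatment of the corner data is the right idea and closes the argument.
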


%%%%%%%%%%%%%%%%%%%%%%%%%%%%%%%%%%%%%%%%%%%%%%%%%%%%%%%%%%%%%%%%%%%%%%%%%
\subsection{Smoothing cofactor method}
The smoothing cofactor method is a general method to deal with the dimension of spline space, and it was firstly introduced in\cite{sm1,sm2}. Recently, the method was applied to calculate the dimension of spline space over general T-mesh~\cite{wu2012,zeng2015,huang2024stability}. 

Referring to Figure~\ref{fig4}, let $s(x,y)\in S_{d}(\mathscr{T})$ be a spline function over a T-mesh $\mathscr{T}$, and $v$ be an interior vertex of
$\mathscr{T}$ intersected by two $l$-edges along the $x$-direction and $y$-direction respectively. Let $C_i$ be the four cells surrounding the vertex $v$ and $s_i(x,y)=s(x,y)|_{C_i}$, $i=1,2,3,4$. 
Assume that the equations of these two T $l$-edges are $y=y_0$ and $x=x_0$ respectively. Then we have 
\begin{align}\label{eq1}
    s_2(x,y)&=s_1(x,y)+\gamma_{1}(x)(y-y_0)^d \nonumber \\
   s_4(x,y)&=s_1(x,y)+\gamma_{2}(y)(x-x_0)^d\\ 
   s_3(x,y)&=s_1(x,y)+\gamma_{1}(x)(y-y_0)^d+\gamma_{2}(y)(x-x_0)^d+\delta(x-x_0)^d(y-y_0)^d,\nonumber   
\end{align} 
where $\delta\in\mathbb{R}$ is called the \textbf{vertex cofactor} of $s(x,y)$ corresponding to the vertex $v$. 
The above equations are called the $\textbf{local conformality condition}$. The details can be found in~\cite{wu2012}.

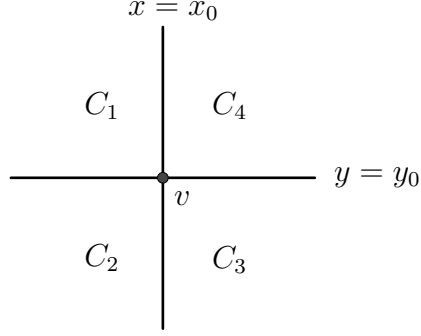
\begin{figure}
	\centering
	\definecolor{uuuuuu}{rgb}{0.26666666666666666,0.26666666666666666,0.26666666666666666}
	\begin{tikzpicture}[line cap=round,line join=round,>=triangle 45,x=1cm,y=1cm]
		\draw [line width=1pt] (4,2.5)-- (8,2.5);
		\draw [line width=1pt] (6,4.5)-- (6,0.5);
		\draw (4.824242424242416,3.7761616161616014) node[anchor=north west] {$C_1$};
		\draw (4.824242424242416,1.764040404040395) node[anchor=north west] {$C_2$};
		\draw (6.503030303030293,1.7397979797979708) node[anchor=north west] {$C_3$};
		\draw (6.503030303030293,3.7761616161616014) node[anchor=north west] {$C_4$};
		\draw (8.110101010100999,2.7822222222222104) node[anchor=north west] {$y=y_0$};
		\draw (5.4,4.980202020202002) node[anchor=north west] {$x=x_0$};
		\draw (6.004040404040396,2.4610101010100897) node[anchor=north west] {$v$};
		\begin{scriptsize}
			\draw [fill=uuuuuu] (6,2.5) circle (2pt);
		\end{scriptsize}
	\end{tikzpicture}
	\caption{\label{fig4}Local conformality condition}
\end{figure}

The \textbf{global conformality condition} gives relations among all the vertex cofactors in a T $l$-edge.
Referring to Figure~\ref{fig5}, suppose a T $l$-edge has $r$ vertices whose $x$-coordinates are $x_1,\ldots,x_r$ and the corresponding vertex cofactors are $\delta_1,\ldots,\delta_r$, respectively. Then we have(\cite{dim20061,wu2012,dim2016}):
$$
\sum\limits_{i=1}^{r}\delta_{i}(x-x_i)^d=0.
$$
This equation is equivalent to a linear system (denoted by $\mathscr{P}=0$):
\begin{equation}\label{tl}
\begin{pmatrix}
	1 & 1 & \cdots & \cdots & 1 \\
	x_1 & x_2 & \cdots & \cdots & x_r\\
	x_1^2 & x_2^2 & \cdots & \cdots & x_r^2\\
	\cdots & \cdots & \cdots & \cdots & \cdots\\
	x_1^{d-1} & x_2^{d-1} & \cdots & \cdots & x_r^{d-1}\\
	x_1^d & x_2^d & \cdots & \cdots & x_r^d
\end{pmatrix}
\begin{pmatrix}
	\delta_1 \\
	\delta_2 \\
	\delta_3 \\
	\vdots\\
	\delta_{r-1} \\
	\delta_{r} 
\end{pmatrix}
=\begin{pmatrix}
	0 \\
	0 \\
	0 \\
	\vdots\\
	0 \\
	0
\end{pmatrix}.
\end{equation}

Note that when $r<d+1$, the null space of the linear system
is zero. Thus, in such case, the T $l$-edge will not contribute to the dimension of the spline space. We call such a T $l$-edge a \textbf{vanished edge.} We assume that vanished edges do not exist throughout the paper.

The linear systems determined by all the T $l$-edges of $\mathscr{T}$ are called the global conformality condition of $S_d(\mathscr{T})$. %Then one can define the \textbf{conformality vector space} as follow.

\begin{figure}
    \centering
\definecolor{uuuuuu}{rgb}{0.26666666666666666,0.26666666666666666,0.26666666666666666}
\begin{tikzpicture}[line cap=round,line join=round,>=triangle 45,x=1cm,y=1cm,scale=2]
\draw [line width=1pt] (1,0.5)-- (1,2.5);
\draw [line width=1pt] (1,1.5)-- (5,1.5);
\draw [line width=1pt] (5,2.5)-- (5,0.5);
\draw [line width=1pt] (1.5,1.5)-- (1.5,2);
\draw [line width=1pt] (2,1.5)-- (2,1);
%\draw [line width=1pt] (2.5,2)-- (2.5,1);
%\draw [line width=1pt] (3,1.5)-- (3,1);
\draw [line width=1pt] (4.5,1.5)-- (4.5,1);
\draw [line width=1pt] (4,1.5)-- (4,2);
\draw (0.74,1.5) node[anchor=north west] {$\delta_1$};
%\draw (1.35,1.5) node[anchor=north west] {$\delta_2$};
%\draw (1.85,1.85) node[anchor=north west] {$\delta_3$};
%\draw (2.48,1.5) node[anchor=north west] {$\delta_4$};
%\draw (2.94,1.5) node[anchor=north west] {$\delta_5$};
%\draw (3.82,1.5) node[anchor=north west] {$\delta_{r-2}$};
%\draw (4.37,1.85) node[anchor=north west] {$\delta_{r-1}$};
\draw (4.95,1.5) node[anchor=north west] {$\delta_r$};
\draw (2.5,1.5) node[anchor=north west] {$\ldots\ldots\ldots$};
\begin{scriptsize}
\draw [fill=uuuuuu] (1,1.5) circle (1pt);
\draw [fill=uuuuuu] (1.5,1.5) circle (1pt);
\draw [fill=uuuuuu] (2,1.5) circle (1pt);
%\draw [fill=uuuuuu] (2.5,1.5) circle (1pt);
%\draw [fill=black] (3,1.5) circle (1pt);
\draw [fill=black] (4,1.5) circle (1pt);
\draw [fill=uuuuuu] (4.5,1.5) circle (1pt);
\draw [fill=black] (5,1.5) circle (1pt);
\end{scriptsize}
\end{tikzpicture}
    \caption{\label{fig5}global conformality condition along a horizontal T $l$-edge}
\end{figure}
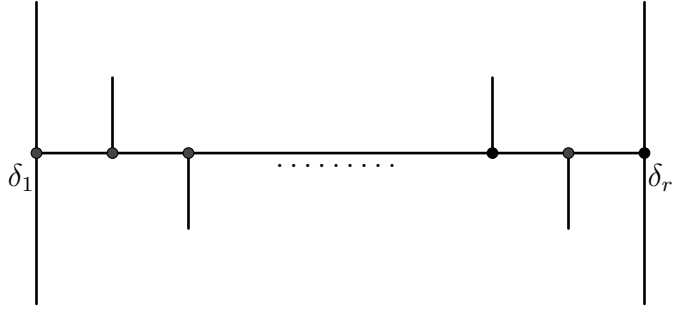

\begin{definition} (\cite{zeng2015,huang2024stability}) \label{def2.3}
Let $L(\mathscr{T})$ be the T-connected component of $\mathscr{T}$, $l_1,l_2,\cdots,l_t$ be all the T $l$-edges of $L(\mathscr{T})$, and $\delta_1, \delta_2, \cdots,\delta_{v}$ be all the vertex cofactors of $L(\mathscr{T})$. The conformality vector space (\textsf{CVS} in short) of $L(\mathscr{T})$ is defined by
\begin{equation*}	
 \textsf{CVS}[L(\mathscr{T})]:=\{\boldsymbol{\delta}=\left(\delta_1, \delta_2,\cdots,\delta_{v}\right)\big|\,\mathscr{P}_{l_i}=0,\quad 1\leq i\leq t\}.
 \end{equation*}
The coefficient matrix for the homogeneous linear equations of $\textsf{CVS}[L(\mathscr{T})]$ is called the \textbf{conformality matrix} associated with $L(\mathscr{T})$, and is denoted by $M(L(\mathscr{T}))$.
\end{definition}

Notice that the concept of conformality vector space and conformality matrix can be generalize to any sub-component of $L(\mathscr{T})$. On the other hand, we have the follow proposition which is a main result of the smoothing cofactor method applied on T-meshes.

\begin{proposition} (\cite{zeng2015})\label{prop2.1}
Given a T-mesh $\mathscr{T}$, and suppose that $M(L(\mathscr{T}))$ is the conformality matrix associated with the T-connected component $L(\mathscr{T})$. Then 
\begin{equation*}	
 \dim S_{d}(\mathscr{T})=(d+1)^2+c(d+1)+n_v+\dim\textsf{CVS}[L(\mathscr{T})],
 \end{equation*}
\end{proposition}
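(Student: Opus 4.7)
The plan is to apply the smoothing cofactor method constructively: realize every $s(x,y)\in S_d(\mathscr T)$ as a sum built out of one reference polynomial together with independent edge cofactors and vertex cofactors, and then show that the total parameter count matches the claimed formula. I would fix one reference cell $C_0\in\mathscr F$ and let $p_0=s|_{C_0}\in P_{\textbf d}$; this piece contributes the first summand $(d+1)^2$. To extend $p_0$ to the remaining cells I would traverse $\mathscr T$ cell by cell (using the connectedness of $\Omega$) and apply the local conformality relation \eqref{eq1} across each interior $l$-edge crossed, which introduces an edge cofactor polynomial $\gamma(x)$ or $\gamma(y)$ and, at each interior vertex, a scalar vertex cofactor $\delta$.

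The parameter count then splits into pieces that are independent by construction. Each of the $c$ cross-cuts carries a free degree-$d$ cofactor polynomial in the appropriate variable, contributing $c(d+1)$ parameters. Each mono-vertex contributes a free real vertex cofactor: such a vertex lies on at least one $l$-edge that is not a T $l$-edge, so its $\delta$ participates in no non-trivial global conformality system beyond what is already absorbed into the cross-cut polynomials, and it therefore adds $1$ to the count; summed over mono-vertices this is the $n_v$ term. Finally, the vertex cofactors at multi-vertices and all T $l$-edges are governed simultaneously by the linear systems $\mathscr P_{l_i}=0$ of \eqref{tl}, so by Definition~\ref{def2.3} the admissible configurations form exactly $\textsf{CVS}[L(\mathscr T)]$, contributing $\dim\textsf{CVS}[L(\mathscr T)]$ further parameters. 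Adding the four pieces produces the right-hand side.

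To close the proof I would show that this association between parameter tuples and spline functions is a linear bijection onto $S_d(\mathscr T)$. Surjectivity follows because iterating \eqref{eq1} from $C_0$ reconstructs $s$ on every cell, and the reconstruction is independent of the traversal order precisely because the vertex cofactors satisfy the global relations defining $\textsf{CVS}[L(\mathscr T)]$; injectivity follows because each parameter can be read back from $s$ (the polynomial $p_0$ as $s|_{C_0}$, each edge cofactor as a polynomial jump of $s$ across the corresponding $l$-edge divided by the appropriate power, and each $\delta$ from the four-cell identity in \eqref{eq1}). The main obstacle will be the bookkeeping at vertices that sit on both a cross-cut or ray and another $l$-edge: I must ensure that no cofactor is counted twice and that no global conformality constraint is silently imposed on what should be a free mono-vertex parameter. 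I would resolve this by adopting the mono-vertex/multi-vertex partition from Section~\ref{sec.prelim} verbatim and by exploiting the Vandermonde structure of the coefficient matrix in \eqref{tl} to verify that cross-cut and ray cofactors realize their full $d+1$ degrees of freedom without interfering with the conformality system defining $\textsf{CVS}[L(\mathscr T)]$.
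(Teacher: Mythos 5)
The paper does not prove this proposition itself (it is quoted from~\cite{zeng2015}), so I am judging your attempt on its own terms. Your overall strategy --- representing $s$ by a reference polynomial, edge cofactors and vertex cofactors, counting free parameters, and checking the correspondence is a bijection --- is the standard smoothing-cofactor argument and is the right skeleton. The problem is a genuine error in the vertex bookkeeping, precisely at the spot you flag as "the main obstacle," and your proposed resolution (adopting the mono-/multi-vertex partition verbatim) makes the error rather than fixing it. You identify the $n_v$ term with the set of mono-vertices and assert that a mono-vertex "participates in no non-trivial global conformality system." Both claims are false. In the paper's notation $n_v$ counts interior vertices that lie on \emph{no} T $l$-edge, whereas a mono-vertex is merely a vertex that is not the intersection of \emph{two} T $l$-edges: a mono-vertex can lie on exactly one T $l$-edge, e.g.\ the crossing of a T $l$-edge with a cross-cut or a ray, or a T-node endpoint of a T $l$-edge (see the vertices $v_2,v_6,v_8,v_9$ in Figure~\ref{fig1}, and the remark after Figure~\ref{fig7} that $L(\mathscr{T})$ may carry mono-vertices). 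The cofactor of such a vertex appears in the linear system $\mathscr{P}_l=0$ of the T $l$-edge through it, and by Definition~\ref{def2.3} it is one of the variables $\delta_1,\dots,\delta_v$ defining $\textsf{CVS}[L(\mathscr{T})]$. Under your accounting these cofactors are counted twice (once as free parameters in an "$n_v$" block equal to the number of mono-vertices, once inside the CVS); if instead you restrict the CVS to multi-vertices only, you alter the linear system --- fixing the mono-vertex cofactors at arbitrary values turns $\mathscr{P}_l=0$ into an inhomogeneous system that need not be solvable --- and the resulting count is not $\dim\textsf{CVS}[L(\mathscr{T})]$. The correct dichotomy is "on $L(\mathscr{T})$" versus "off $L(\mathscr{T})$," not mono versus multi.

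A secondary gap: rays never appear in your argument. A ray carries a degree-$d$ edge cofactor at its boundary end, but the requirement that the jump vanish beyond its interior T-node endpoint imposes $d+1$ independent linear conditions that exactly consume those degrees of freedom; this is why only cross-cuts contribute $c(d+1)$ and why vertices lying on rays but off $L(\mathscr{T})$ still contribute freely to $n_v$. Without this computation, your claim that the four blocks of parameters are independent and exhaust $S_d(\mathscr{T})$ is unsupported. Once you (i) replace "mono-vertex" by "interior vertex not on $L(\mathscr{T})$" in the $n_v$ block, (ii) let $\textsf{CVS}[L(\mathscr{T})]$ govern \emph{all} vertices of $L(\mathscr{T})$ including its mono-vertices and T-node endpoints, and (iii) verify the ray cancellation, the parameter count closes and the rest of your bijection argument goes through.
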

where $c$ is the number of cross-cuts of $\mathscr{T}$, and $n_v$ is the number of all the interior vertices of $\mathscr{T}$ with all vertices on T $l$-edges being removed.

The above Proposition~\ref{prop2.1} shows that the dimension of spline space over a T-mesh $\mathscr{T}$ is indeed dependent on the dimension of conformality vector space of $L(\mathscr{T})$.

%%%%%%%%%%%%%%%%%%%%%%%%%%%%%%%%%%%%%%%%%%%%%%%%%%%%%%%%%%%%%%%%%%%%%%%%%%%
\subsection{Bipartite partition and diagonalizable component}
To calculate the dimension of conformality vector space of the T-connected component for a given T-mesh, one can split the T-connected component into two parts. 
%Thus, we introduce the concept of bipartite partition and diagonalizable component in the follow~\cite{dim2016}.

\begin{definition}\cite{huang2024stability}
Let $L(\mathscr{T})$ be the T-connected component of a T-mesh $\mathscr{T}$ and $L\subset L(\mathscr{T})$ be a sub-component of $L(\mathscr{T})$.  Then we call $\{L, L(\mathscr{T})\backslash L\}$ a bipartite partition of $L(\mathscr{T})$. 
\end{definition}

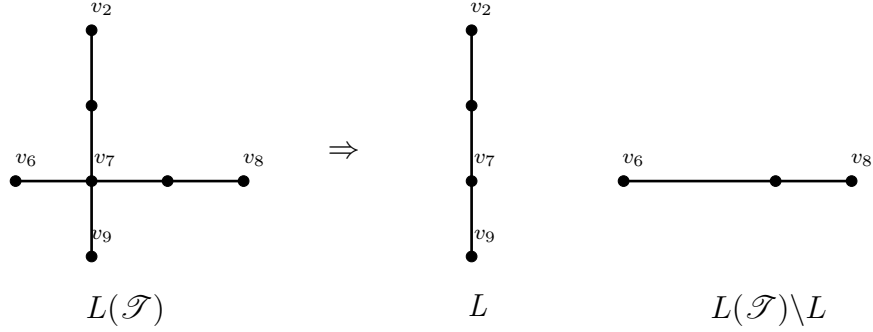
\begin{figure}
    \centering
    \begin{tikzpicture}[line cap=round,line join=round,>=triangle 45,x=1.0cm,y=1.0cm]
\draw [line width=1.pt] (1.,2.)-- (4.,2.);
\draw [line width=1.pt] (2.,1.)-- (2.,4.);
\draw [line width=1.pt] (7.,1.)-- (7.,4.);
\draw [line width=1.pt] (9.,2.)-- (12.,2.);
\draw (4.963510987832924,2.6) node[anchor=north west] {$\Rightarrow$};
\draw (1.7803470140463864,0.65) node[anchor=north west] {$L(\mathscr{T})$};
\draw (6.8,0.65) node[anchor=north west] {$\it{L}$};
\draw (10.,0.65) node[anchor=north west] {$L(\mathscr{T})\backslash L$};
\begin{scriptsize}
\draw [fill=black] (1.,2.) circle (2.0pt);
\draw[color=black] (1.1451743862495314,2.284302347583261) node {$v_6$};
\draw [fill=black] (4.,2.) circle (2.0pt);
\draw[color=black] (4.138516655177239,2.284302347583261) node {$v_8$};
\draw [fill=black] (2.,1.) circle (2.0pt);
\draw[color=black] (2.1380879193572584,1.2767871448710042) node {$v_9$};
\draw [fill=black] (2.,4.) circle (2.0pt);
\draw[color=black] (2.1380879193572584,4.2847310834032495) node {$v_2$};
\draw [fill=black] (2.,2.) circle (2.0pt);
\draw[color=black] (2.18,2.284302347583261) node {$v_7$};
\draw [fill=black] (2.,3.) circle (2.0pt);
\draw [fill=black] (3.,2.) circle (2.0pt);
\draw [fill=black] (7.,1.) circle (2.0pt);
\draw[color=black] (7.18,1.2767871448710042) node {$v_9$};
\draw [fill=black] (7.,4.) circle (2.0pt);
\draw[color=black] (7.131858924104947,4.2847310834032495) node {$v_2$};
\draw [fill=black] (9.,2.) circle (2.0pt);
\draw[color=black] (9.132287659924927,2.284302347583261) node {$v_6$};
\draw [fill=black] (12.,2.) circle (2.0pt);
\draw[color=black] (12.14023159845716,2.284302347583261) node {$v_8$};
\draw [fill=black] (7.,3.) circle (2.0pt);
\draw [fill=black] (7.,2.) circle (2.0pt);
\draw[color=black] (7.18,2.284302347583261) node {$v_7$};
\draw [fill=black] (11.,2.) circle (2.0pt);
\end{scriptsize}
\end{tikzpicture}
    \caption{\label{fig6}A bipartite partition of $L(\mathscr{T})$}
\end{figure}

Figure~\ref{fig6} illustrates a bipartite partition of the T-connected component $L(\mathscr{T})$ in Figure~\ref{fig1}.  Notice that, $L(\mathscr{T})\backslash L=\{v_6v_8\}$ may not be a sub-component of $L(\mathscr{T})$ since the vertex $v_7$ that lies on both $L$ and $L(\mathscr{T})\backslash L$ is removed from the edge $v_6v_8$. 

\begin{definition}\cite{dim2016}
    Let $\{l_1,l_2,\ldots,l_t\}$ be all the T $l$-edges of $L(\mathscr{T})$ for a given T-mesh $\mathscr{T}$. For an ordering of all these T $l$-edges, say $l_1\succ l_2\ldots\succ l_t$, if $r(l_i)\geq d+1$, where $r(l_i)$ is the number of vertices on $l_i$ but not on $l_j$ ( $j=1,2,\cdots,i-1$), then we call such ordering \textbf{a reasonable order}, and $L(\mathscr T)$ is called a \textbf{diagonalizable T-connected component}.  
If any order of all the T-edges in $L(\mathscr T)$ isn't a reasonable order, then $L(\mathscr T)$ is called a \textbf{non-diagonalizable T-connected component}.

\end{definition}

For a bipartite partition of $L(\mathscr{T})=\{L_1,L_2\}$ with $L_1$ being the sub-component of $L(\mathscr{T})$, Zeng et al. proved the follow proposition.

\begin{proposition}\cite{huang2024stability}\label{prop2.2}
    Given a T-mesh $\mathscr{T}$, suppose $L(\mathscr{T})=\{L_1,L_2\}$ is a bipartite partition of $L(\mathscr{T})$ with $L_1\subset L(\mathscr{T})$. If all the T $l$-edges in $L_2$ have a reasonable order, then 
\begin{equation}\label{rea}
    \dim\textsf{CVS}[L(\mathscr{T})]=\dim\textsf{CVS}[L_1]+\dim\textsf{CVS}[L_2].
\end{equation}
\end{proposition}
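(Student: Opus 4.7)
The approach is to exploit the block-triangular structure of the conformality matrix induced by the bipartite partition, and then to use the reasonable order of $L_2$ to exhibit an explicit Vandermonde staircase inside the $L_2$-block.

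First, I would partition the cofactor coordinates into three disjoint groups: $V_1$ (the interior vertices of $L(\mathscr{T})$ lying only on T $l$-edges of $L_1$), $V_s$ (the multi-vertices shared by $l$-edges of both $L_1$ and $L_2$), and $V_2$ (the interior vertices lying only on T $l$-edges of $L_2$). Because $L_1$ is a sub-component, it contains every vertex on its $l$-edges, so $V(L_1)=V_1\cup V_s$, whereas in $L_2$ the shared vertices have been removed and $V(L_2)=V_2$. Partition the rows of $M:=M(L(\mathscr{T}))$ into $R_1$ (equations from the $l$-edges of $L_1$) and $R_2$ (equations from the $l$-edges of $L_2$). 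Since no $l$-edge of $L_1$ touches a $V_2$-vertex, the $V_2$-columns of $R_1$ vanish, giving the block form
\[
M \;=\; \begin{pmatrix} M(L_1) & 0 \\ A & M(L_2) \end{pmatrix},
\]
with columns ordered as $V_1\cup V_s$ followed by $V_2$. Here $A$ records the coefficients of the shared-vertex cofactors inside the $L_2$-equations.

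Second, I would use the reasonable order $l_1\succ l_2\succ\cdots\succ l_t$ of $L_2$ to produce a Vandermonde staircase. For each $l_i$ choose $d+1$ vertices on $l_i$ in $V(L_2)=V_2$ that do not appear on any previous $l_j$; the definition of reasonable order guarantees such a choice. The $d+1$ scalar equations contributed by $l_i$, restricted to the $d+1$ chosen columns, form a $(d+1)\times(d+1)$ Vandermonde matrix in the abscissas of the new vertices, which is invertible. Collecting these pivot columns across all $i$ yields, inside $R_2$, an invertible $(d+1)t\times(d+1)t$ submatrix whose columns lie entirely in $V_2$. This shows $\mathrm{rank}\,M(L_2)=(d+1)t$, and, because the pivots live in $V_2$ where $R_1$ is zero, the rows of $R_2$ in $M$ remain linearly independent from the rows of $R_1$. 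Hence
\[
\mathrm{rank}\,M \;=\; \mathrm{rank}\,M(L_1)+(d+1)t \;=\; \mathrm{rank}\,M(L_1)+\mathrm{rank}\,M(L_2).
\]
Applying rank-nullity then yields
\[
\dim\textsf{CVS}[L(\mathscr{T})] \;=\; |V(L_1)|+|V(L_2)|-\mathrm{rank}\,M \;=\; \dim\textsf{CVS}[L_1]+\dim\textsf{CVS}[L_2],
\]
which is \eqref{rea}.

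The main obstacle I expect is the bookkeeping around the shared multi-vertices $V_s$: they appear as cofactor variables of $L_1$ but are also touched by the $L_2$-equations through the off-diagonal block $A$. The delicate point is that in the definition of $\textsf{CVS}[L_2]$ those shared cofactors are removed, so $M(L_2)$ coincides with the right-hand block alone rather than with $(A\mid M(L_2))$. Consequently, the pivots coming from the reasonable order must be choosable inside $V_2$ and never in $V_s$; this is exactly what the convention "vertices on $l_i$" means for $L_2$ read as a sub-component with shared vertices removed. Once this is clarified, the block-triangular and Vandermonde steps above close the argument, and no hypothesis on the order of $L_1$ is needed.
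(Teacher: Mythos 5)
Your proof is correct and follows essentially the route the paper intends: the paper imports this proposition from \cite{huang2023} and rests it on the fact (attributed to \cite{dim2016}) that a reasonably ordered family of T $l$-edges has a conformality matrix of full row rank, which is exactly what your block-triangular decomposition plus the Vandermonde staircase establishes. Your closing remark is also the right point of care --- the reasonable order for $L_2$ must be read relative to $V(L_2)=V_2$ with the shared multi-vertices removed, so the $d+1$ pivot vertices per edge land in the $V_2$-columns where the $L_1$-rows vanish --- and with that convention the rank additivity and rank--nullity steps close the argument as you state.
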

 
 Li et al. proved that if several T $l$-edges have a reasonable order, then the corresponding conformality matrix has a full row rank ~\cite{dim2016}. Thus, by Proposition~\ref{prop2.1} and Proposition~\ref{prop2.2}, one can further transform the calculation of dimension of $S_d(\mathscr{T})$ into the calculation of the conformality vector space of $L_1$ which is a sub-component of $L(\mathscr{T})$. %Finally, we provide the follow definition.

 \begin{definition}\cite{huang2024stability}
     Given a T-mesh $\mathscr{T}$, if there doesn't exist a bipartite partition $\{L_1,L_2\}$ of $L(\mathscr{T})$ such that all the T-edges of $L_2$ have a reasonable order, then $\mathscr{T}$ is called a \textbf{completely non-diagonalizable T-mesh}, and $L(\mathscr{T})$ is called a \textbf{completely non-diagonalizable T-connected component.}
 \end{definition}

The following  lemma provided in~\cite{zeng2015} is useful in later proofs.
\begin{lemma}(\cite{zeng2015})\label{lem2.1}
    For a given T-mesh $\mathscr{T}_1$, suppose $L(\mathscr{T_1})=\{L_1,L_2\}$ is a bipartite partition of $L(\mathscr{T}_1)$ with $L_1\subset L(\mathscr{T}_1)$, and  $L_1=\{l_1,l_2,\ldots,l_{m}\}$. One can extend $l_i$ to $l'_i$ for $i=1,2,\ldots,m$ such that $l'_i$ is still a T $l$-edge of the new T-mesh $\mathscr{T}_2$. Suppose $L'_1=\{l'_1,l'_2,\ldots,l'_{m}\}$ and $\{L'_1,L_2\}$ is a bipartite partition of $L(\mathscr{T}_2)$. If $\dim\textsf{CVS}[L(\mathscr{T}_2)]=\dim\textsf{CVS}[L'_1]+\dim\textsf{CVS}[L_2]$, then $\dim\textsf{CVS}[L(\mathscr{T}_1)]=\dim\textsf{CVS}[L_1]+\dim\textsf{CVS}[L_2]$.
\end{lemma}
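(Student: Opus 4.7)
The plan is to reformulate the $\textsf{CVS}$ of a T-mesh with a bipartite partition as a fibered product, use the hypothesis to extract a strong vanishing statement on shared cofactors in $\mathscr{T}_2$, and then transfer this vanishing to $\mathscr{T}_1$ via an extension-by-zero map.

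First I would prove a general structural identity: for any T-mesh $\mathscr{T}$ with bipartite partition $\{L_1,L_2\}$, let $V_{12}=V(L_1)\cap V(L_2)$ be the set of vertices lying on edges of both sub-components, and let $\pi_i:\textsf{CVS}[L_i]\to \mathbb{R}^{V_{12}}$ be the natural projection. Since each conformality equation comes from a single T $l$-edge lying entirely in $L_1$ or entirely in $L_2$, the equation system splits cleanly and yields the fibered-product description
\[
\textsf{CVS}[L(\mathscr{T})]\;\cong\;\{(a,b)\in\textsf{CVS}[L_1]\times\textsf{CVS}[L_2]:\pi_1(a)=\pi_2(b)\},
\]
and consequently, by the rank--nullity theorem applied to the map $(a,b)\mapsto\pi_1(a)-\pi_2(b)$,
\[
\dim\textsf{CVS}[L(\mathscr{T})] = \dim\textsf{CVS}[L_1]+\dim\textsf{CVS}[L_2]-\dim\bigl(\pi_1(\textsf{CVS}[L_1])+\pi_2(\textsf{CVS}[L_2])\bigr).
\]
Applying this identity to $\mathscr{T}_2$ with partition $\{L'_1,L_2\}$, the hypothesis $\dim\textsf{CVS}[L(\mathscr{T}_2)]=\dim\textsf{CVS}[L'_1]+\dim\textsf{CVS}[L_2]$ forces the correction term to vanish, so $\pi_1(\textsf{CVS}[L'_1])=\pi_2(\textsf{CVS}[L_2])=0$ relative to the shared vertex set $V_{12}^{(2)}$ in $\mathscr{T}_2$. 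In other words, every cofactor vector in $\textsf{CVS}[L'_1]$ or in $\textsf{CVS}[L_2]$ (viewed inside $\mathscr{T}_2$) must vanish identically on $V_{12}^{(2)}$.

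Next I would set up an extension-by-zero injection $\textsf{CVS}[L_1]\hookrightarrow\textsf{CVS}[L'_1]$: given $\delta\in\textsf{CVS}[L_1]$, define $\tilde\delta$ to equal $\delta$ on cofactors already present in $\mathscr{T}_1$ and zero on cofactors at vertices newly introduced by the extensions. Substituting these zeros into the conformality equation for $l'_i$ in $\mathscr{T}_2$ reduces it exactly to the conformality equation for $l_i$ in $\mathscr{T}_1$, so $\tilde\delta\in\textsf{CVS}[L'_1]$. By the preceding paragraph, $\tilde\delta$ vanishes on $V_{12}^{(2)}$; because $V_{12}^{(1)}\subseteq V_{12}^{(2)}$ (a vertex shared in $\mathscr{T}_1$ remains shared in $\mathscr{T}_2$), $\delta$ vanishes on $V_{12}^{(1)}$. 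Hence $\pi_1^{(1)}(\textsf{CVS}[L_1])=0$; an identical extend-by-zero argument for $L_2$ produces $\pi_2^{(1)}(\textsf{CVS}[L_2])=0$. Substituting these two vanishing statements back into the dimension identity for $\mathscr{T}_1$ yields the required equality.

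The main subtlety I anticipate is the bookkeeping of where new vertices actually appear: extensions can generate new vertices along $l'_i\setminus l_i$, along the original portion $l_i$ (if another extension crosses it), and along edges of $L_2$ (thereby enlarging the shared-vertex set). I would need to verify carefully that in every case the extension-by-zero assignment is well defined, that the restriction of the $l'_i$-equation to the old cofactors reproduces the $l_i$-equation on the nose, and that the inclusion $V_{12}^{(1)}\subseteq V_{12}^{(2)}$ genuinely holds. These are combinatorial checks that I would package as a short preliminary observation before invoking the fibered-product dimension identity on $\mathscr{T}_1$.
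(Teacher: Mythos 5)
The paper does not actually prove this lemma --- it is quoted from \cite{zeng2015} --- so your attempt has to be judged on its own terms, and it contains a genuine gap at its very first step. Your fibered-product identity treats $\textsf{CVS}[L_2]$ as a space of cofactor vectors indexed by \emph{all} vertices on the edges of $L_2$, including the vertices shared with $L_1$, and then projects both factors onto the shared set $V_{12}$. But in this paper's definition of a bipartite partition (see the remark after Figure~\ref{fig6}), $L_2=L(\mathscr{T})\backslash L_1$ has the shared vertices \emph{removed}: the conformality equation of an edge of $L_2$ inside $\textsf{CVS}[L_2]$ simply omits the cofactors at vertices lying on $L_1$. So the variable sets of $\textsf{CVS}[L_1]$ and $\textsf{CVS}[L_2]$ are disjoint, there is no projection $\pi_2$ onto $V_{12}$, and the correct structure is a block lower-triangular conformality matrix $M=\bigl(\begin{smallmatrix}M_1&0\\ C&M_2\end{smallmatrix}\bigr)$; additivity of dimensions is equivalent to $\mathrm{rank}(M)=\mathrm{rank}(M_1)+\mathrm{rank}(M_2)$, not to the vanishing of projections onto shared cofactors. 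Consequently your central deduction --- ``the hypothesis forces $\pi_1(\textsf{CVS}[L'_1])=\pi_2(\textsf{CVS}[L_2])=0$'' --- does not follow. A concrete counterexample: let $L_1$ be a single horizontal T $l$-edge with $p\ge d+2$ vertices and $L_2$ a single vertical T $l$-edge crossing it once, with $q\ge d+3$ vertices. Both blocks have full row rank, so $\dim\textsf{CVS}[L(\mathscr{T})]=\dim\textsf{CVS}[L_1]+\dim\textsf{CVS}[L_2]$ holds (take $\mathscr{T}_2=\mathscr{T}_1$, so the hypothesis is satisfied verbatim), yet $\textsf{CVS}[L_1]$ contains vectors whose cofactor at the crossing vertex is nonzero, since deleting one column of a $(d+1)\times p$ Vandermonde system with $p>d+2$ strictly drops the nullity. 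So the ``strong vanishing statement'' you extract is simply false under the lemma's hypothesis, and everything downstream (the extension-by-zero transfer of that vanishing) collapses.

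It is worth noting that your argument is internally coherent if one consistently uses \emph{your} convention for $\textsf{CVS}[L_2]$ (shared vertices retained): then additivity really is equivalent to the vanishing of both projections, and extension by zero transfers it from $\mathscr{T}_2$ to $\mathscr{T}_1$. But that proves a different statement, whose hypothesis is strictly stronger than the lemma's (in the example above your version of the hypothesis fails while the paper's holds). A repair has to work with the rank formulation: the hypothesis gives $\mathrm{rank}(M^{(2)})=\mathrm{rank}(M_1^{(2)})+\mathrm{rank}(M_2)$ for the extended mesh, and one must argue that deleting the columns corresponding to the newly created vertices preserves this equality of ranks, using the one-sided inequality of Lemma~\ref{lem3.2} for the reverse direction. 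Your extension-by-zero injection $\textsf{CVS}[L_1]\hookrightarrow\textsf{CVS}[L'_1]$ is a sound ingredient for such an argument, but on its own it only re-derives the inequality $\dim\textsf{CVS}[L(\mathscr{T}_1)]\le\dim\textsf{CVS}[L_1]+\dim\textsf{CVS}[L_2]$, which is already Lemma~\ref{lem3.2}; the substance of the lemma is the opposite inequality, and that is exactly where your proposal has no valid mechanism.
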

\section{Tensor product T-connected component}

In this section, we first
introduce the concept of tensor product T-connected component. Then we calculate the dimension of the conformality vector space of a tensor product T-connected component. Finally we prove that if one level subdivision is a tensor product subdivision, then the dimension of the conformality vector space over the whole T-connected component can be calculated level by level.

\begin{definition}\label{def tp}
Let $L(\mathscr T)$ be the $T$-connected component of a T-mesh $\mathscr T$. We define $L(\mathscr{T}) $ as a \textbf{tensor product T-connected component} if it comprises all $l$-edges, possibly excluding some boundary edges, of a tensor product mesh, such that these $l$-edges form an aligned grid of horizontal and vertical lines.
\end{definition}

An example is illustrated in Figure~\ref{fig7}, where the T-mesh shown on the left has a tensor product T-connected component shown on the right. 
Note that there may be some mono-vertices on $L(\mathscr T)$ which are the intersections of the cross cuts and rays of $\mathscr{T}$ with the $l$-edges of $L(\mathscr T)$. Thus there are same number of vertices on each horizontal(vertical) T $l$-edge in a tensor product T-connected component, and the set of $x$-coordinates ($y$-coordinates) of the vertices on each horizontal(vertical) T $l$-edge is also the same.

\begin{figure}
    \centering
\begin{tikzpicture}[line cap=round,line join=round,>=triangle 45,x=1cm,y=1cm,scale=1.5]
\draw [line width=1pt] (0.5,0.5)-- (3.5,0.5);
\draw [line width=1pt] (3.5,0.5)-- (3.5,2.5);
\draw [line width=1pt] (3.5,2.5)-- (0.5,2.5);
\draw [line width=1pt] (0.5,2.5)-- (0.5,0.5);
\draw [line width=1pt] (0.5,2)-- (3.5,2);
\draw [line width=1pt] (0.5,1)-- (3.5,1);
\draw [line width=1pt] (1,2.5)-- (1,0.5);
\draw [line width=1pt] (3,2.5)-- (3,0.5);
\draw [line width=1pt] (1.5,2)-- (1.5,1);
\draw [line width=1pt] (1,1.5)-- (3,1.5);
\draw [line width=1pt] (2.5,2)-- (2.5,1);
\draw [line width=1pt] (4.5,1.5)-- (6.5,1.5);
\draw [line width=1pt] (5,2)-- (5,1);
\draw [line width=1pt] (6,2)-- (6,1);
\draw [line width=1pt,dotted] (4.5,2)-- (4.5,1);
\draw [line width=1pt,dotted] (4.5,1)-- (6.5,1);
\draw [line width=1pt,dotted] (6.5,1)-- (6.5,2);
\draw [line width=1pt,dotted] (6.5,2)-- (4.5,2);
\begin{scriptsize}
\draw [fill=black] (1.5,2) circle (1pt);
\draw[color=black] (1.6,2.15) node {$v_{1}$};
\draw [fill=black] (1.5,1) circle (1pt);
\draw[color=black] (1.64,1.15) node {$v_{5}$};
\draw [fill=black] (1,1.5) circle (1pt);
\draw[color=black] (1.15,1.64) node {$v_{3}$};
\draw [fill=black] (3,1.5) circle (1pt);
\draw[color=black] (3.15,1.64) node {$v_{4}$};
\draw [fill=black] (2.5,2) circle (1pt);
\draw[color=black] (2.6,2.15) node {$v_{2}$};
\draw [fill=black] (2.5,1) circle (1pt);
\draw[color=black] (2.64,1.15) node {$v_{6}$};
\draw [fill=black] (4.5,1.5) circle (1pt);
\draw[color=black] (4.6,1.6) node {$v_3$};
\draw [fill=black] (6.5,1.5) circle (1pt);
\draw[color=black] (6.6,1.6) node {$v_4$};
\draw [fill=black] (5,2) circle (1pt);
\draw[color=black] (5.13,2.15) node {$v_1$};
\draw [fill=black] (5,1) circle (1pt);
\draw[color=black] (5.13,1.15) node {$v_5$};
\draw [fill=black] (6,2) circle (1pt);
\draw[color=black] (6.13,2.15) node {$v_2$};
\draw [fill=black] (6,1) circle (1pt);
\draw[color=black] (6.13,1.15) node {$v_6$};
\end{scriptsize}
\end{tikzpicture}
    \caption{\label{fig7} A T-mesh with a tensor product T-connected component}
\end{figure}
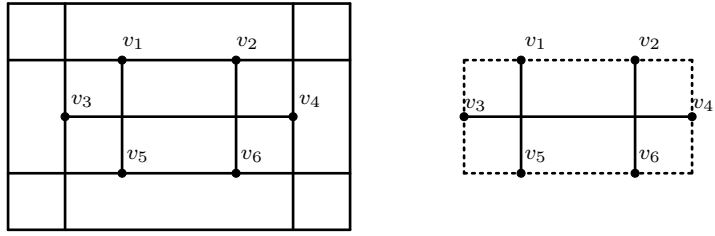

The tensor product T-connected component, as defined in Definition~\ref{def tp}, is a simple class of T-connected components that excludes T-cycle structures, which can cause dimensional instability~\cite{tc2015,tc2017}. Despite its simplicity, this component may include non-diagonalizable cases, complicating the computation of the conformality vector space dimension. The dimension formula for the conformality vector space of a tensor product T-connected component is presented below.

\medskip

Next we introduce a basic result about the coefficient matrix of the linear system~(\ref{tl}). 

\begin{lemma}
\label{lemma3.1}\cite{huang2024stability}
Let $$V_k^n=\begin{pmatrix}1 & 1 & \cdots & 1\\s_1 & s_2 & \cdots & s_k\\ \vdots & \vdots & \vdots & \vdots\\s_1^n & s_2^n & \cdots & s_k^n \end{pmatrix}$$ 
be a matrix with distinct $s_1,s_2,\cdots,s_k\in \mathbb{R}$ ($k>n+1$). Then the reduced row echelon form of $V_k^n$ is $(I_{n+1},S)$, where 
$$S=\begin{pmatrix}f_1(s_{n+2}) & f_1(s_{n+3}) & \cdots &f_1(s_{k})\\f_2(s_{n+2}) & f_2(s_{n+3}) & \cdots &f_2(s_{k})\\ \vdots & \vdots & \vdots &\vdots\\f_{n+1}(s_{n+2}) & f_{n+1}(s_{n+3}) & \cdots &f_{n+1}(s_{k})\end{pmatrix},$$ 
and $f_i(x)=\prod\limits_{\mbox{\tiny$\begin{array}{c}j=1\\j\neq i\end{array}$}}^{n+1}\frac{(x-s_j)}{(s_i-s_j)}$, $i=1,2,\cdots,n+1$.
\end{lemma}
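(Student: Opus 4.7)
The plan is to recognize this as a linear-algebraic restatement of Lagrange interpolation and proceed in two clean steps: first establish that the first $n+1$ columns reduce to the identity, and then identify the remaining columns' coordinates in that basis as Lagrange evaluations.

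First I would observe that the leftmost $n+1$ columns of $V_k^n$ form the square matrix $V_{n+1}^n$, which is a standard Vandermonde matrix in $s_1,\dots,s_{n+1}$. Since the $s_i$ are pairwise distinct, its determinant $\prod_{1\le i<j\le n+1}(s_j-s_i)$ is nonzero, hence it is invertible. Consequently, after performing row reduction on $V_k^n$, the first $n+1$ columns must form $I_{n+1}$. It remains only to determine what the last $k-n-1$ columns become under these same row operations, i.e., to express each column
\[
\bfv_j=(1,s_j,s_j^2,\dots,s_j^n)^T,\quad j>n+1,
\]
as a unique linear combination of $\bfv_1,\dots,\bfv_{n+1}$.

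Next I would invoke the Lagrange interpolation identity: because $f_1,\dots,f_{n+1}$ are the Lagrange basis polynomials for the nodes $s_1,\dots,s_{n+1}$, every polynomial $p$ of degree at most $n$ satisfies $p(x)=\sum_{i=1}^{n+1}p(s_i)f_i(x)$. Applying this to the monomials $p(x)=x^m$ for $m=0,1,\dots,n$ yields
\[
s_j^m=\sum_{i=1}^{n+1}s_i^m\, f_i(s_j),\qquad m=0,1,\dots,n,
\]
which, stacked into a column vector, reads $\bfv_j=\sum_{i=1}^{n+1}f_i(s_j)\,\bfv_i$. Thus the coordinates of $\bfv_j$ in the basis $\{\bfv_1,\dots,\bfv_{n+1}\}$ are precisely $(f_1(s_j),\dots,f_{n+1}(s_j))^T$. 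When the row operations transform $\bfv_1,\dots,\bfv_{n+1}$ into the standard basis vectors, they simultaneously transform $\bfv_j$ into its coordinate vector, producing exactly the $(j-n-1)$-th column of $S$ as claimed.

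There is no real obstacle here; the only point that deserves care is the uniqueness of the coefficients, which follows from the fact that $\bfv_1,\dots,\bfv_{n+1}$ are linearly independent (by invertibility of $V_{n+1}^n$), so that the expression of $\bfv_j$ is unique and the row-reduction output is forced. Combining the two steps gives the asserted RREF $(I_{n+1},S)$.
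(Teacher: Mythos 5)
Your proof is correct and follows essentially the same route as the paper, which simply notes that the leading $(n+1)\times(n+1)$ Vandermonde block $P$ is invertible and that $P^{-1}V_k^n=(I_{n+1},S)$; your explicit use of the Lagrange interpolation identity to verify that the trailing columns become $f_i(s_j)$ just fills in the detail the paper leaves as "obvious."
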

\begin{proof}
Let $P$ be the matrix consisting of the first $n+1$ columns of $V_k^n$. Obviously, $P$ is invertible and $P^{-1}V_k^n=(I_{n+1},S)$. 
\end{proof}

\begin{corollary}\cite{huang2024stability}\label{cor3.1}
Suppose $k> n+1$,  and let $1\le i_1<i_2<\ldots<i_{n+1}\le k$ and $i_1,i_2,\ldots, i_k$ be a permutation of $1,2,\ldots, k$. Then one can perform row elementary transformations such that $V_k^n$ reduces to $W_k^n$, where the submatrix 
$W_k^n\begin{pmatrix}1 & 2 & \cdots & n+1\\i_1 & i_2 & \cdots & i_{n+1}\end{pmatrix}=I_{n+1}$, and
	$$W_k^n\begin{pmatrix}1 & 2 & \cdots & n+1\\i_{n+2} & i_{n+3} & \cdots & i_{k}\end{pmatrix}=\begin{pmatrix}f_1(s_{i_{n+2}}) & f_1(s_{i_{n+3}}) &\cdots & f_1(s_{i_k})\\
    f_2(s_{i_{n+2}}) & f_2(s_{i_{n+3}}) &\cdots & f_2(s_{i_k})\\
  \vdots & \vdots & \vdots &\vdots\\
  f_{n+1}(s_{i_{n+2}}) & f_{n+1}(s_{i_{n+3}}) &\cdots & f_{n+1}(s_{i_k})\end{pmatrix}$$
\end{corollary}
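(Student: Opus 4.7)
The plan is to reduce Corollary~\ref{cor3.1} to Lemma~\ref{lemma3.1} by a column-permutation argument, exploiting that row elementary transformations are insensitive to column ordering. First I would introduce the permutation $\sigma$ of $\{1,2,\ldots,k\}$ with $\sigma(j)=i_j$ and consider the matrix $\tilde V_k^n$ obtained by permuting the columns of $V_k^n$ so that the columns originally indexed by $i_1,\ldots,i_{n+1}$ now occupy positions $1,\ldots,n+1$, and the remaining columns occupy positions $n+2,\ldots,k$ in the order $i_{n+2},\ldots,i_k$. Since $V_k^n$ is Vandermonde in the $s_j$, the permuted matrix $\tilde V_k^n$ has exactly the form treated by Lemma~\ref{lemma3.1}, but with the parameters relabeled as $s_{i_1},s_{i_2},\ldots,s_{i_k}$ along the columns.

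Lemma~\ref{lemma3.1} then supplies a sequence of row elementary transformations that reduces $\tilde V_k^n$ to $(I_{n+1},\tilde S)$, where $\tilde S$ has entries $f_j(s_{i_{n+1+\ell}})$ with the $f_j$ the Lagrange basis polynomials at the nodes $s_{i_1},\ldots,s_{i_{n+1}}$, matching the $f_j$ in the statement of the corollary. Applying the very same row operations to the original $V_k^n$ (not the column-permuted one) yields a matrix $W_k^n$ whose submatrix on columns $i_1,\ldots,i_{n+1}$ equals $I_{n+1}$ and whose submatrix on columns $i_{n+2},\ldots,i_k$ coincides entrywise with the display in the corollary. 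Thus the conclusion follows once we note that row operations commute with column permutations.

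I do not anticipate any serious obstacle: conceptually the result is just Lemma~\ref{lemma3.1} with relabeled interpolation nodes, and the only point requiring a moment's care is to justify that the row-operation transcript obtained for $\tilde V_k^n$ transports back verbatim to $V_k^n$. If desired, one can bypass the permutation entirely by arguing directly: let $P$ denote the $(n+1)\times(n+1)$ submatrix of $V_k^n$ on columns $i_1,\ldots,i_{n+1}$; distinctness of $s_{i_1},\ldots,s_{i_{n+1}}$ makes $P$ an invertible Vandermonde matrix, so $P^{-1}V_k^n$ has $I_{n+1}$ on those columns, while for any other column $\mathbf v(s)=(1,s,\ldots,s^n)^T$ with $s=s_{i_{n+1+\ell}}$ the uniqueness of polynomial interpolation at the nodes $s_{i_1},\ldots,s_{i_{n+1}}$ gives $P^{-1}\mathbf v(s)=(f_1(s),\ldots,f_{n+1}(s))^T$, which is precisely the claimed entry pattern of $W_k^n$.
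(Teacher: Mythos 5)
Your proposal is correct, and your closing direct argument (take $P$ to be the invertible Vandermonde submatrix on columns $i_1,\ldots,i_{n+1}$ and observe that $P^{-1}\mathbf v(s)=(f_1(s),\ldots,f_{n+1}(s))^{\top}$ by uniqueness of Lagrange interpolation at the nodes $s_{i_1},\ldots,s_{i_{n+1}}$) is exactly the natural generalization of the paper's one-line proof of Lemma~\ref{lemma3.1}; the paper itself omits a proof of the corollary, citing it from the literature. Your column-permutation route is also valid but unnecessary given the direct argument, which you correctly identify as the cleaner path.
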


Now we introduce the main result about the dimension of the conformality vector space of a tensor product T-connected component. 

\begin{theorem}\label{thm3.1}
Suppose that $L(\mathscr{T})$ is a tensor product T-connected component of a T-mesh $\mathscr T$, then the dimension of the conformality vector space of $L(\mathscr{T})$ is 
\begin{equation*}
    \dim\textsf{CVS}[L(\mathscr T)]=\begin{cases}
        m(p-d-1)+n(q-d-1) & \text{$L(\mathscr T)$ is  diagonalizable }\\
        (p-d-1)(q-d-1) & \text{$L(\mathscr T)$ is  non-diagonalizable }
    \end{cases}
\end{equation*}
where $m$ and $n$ are the numbers of horizontal and vertical T $l$-edges of $L(\mathscr T)$ respectively, $p$ and $q$ are the numbers of vertices on each horizontal and vertical T $l$-edge respectively, and $d$ is the degree of spline functions. 
\end{theorem}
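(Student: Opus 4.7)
My plan is to handle the diagonalizable and non-diagonalizable cases separately, since the tensor-product structure leads to genuinely different rank behavior in each.

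For the diagonalizable case, I would apply Proposition~\ref{prop2.2} to the canonical bipartite partition $\{L_H, L_V\}$ of $L(\mathscr T)$, where $L_H$ (resp.\ $L_V$) consists of the horizontal (resp.\ vertical) T $l$-edges. Within each of $L_H$ and $L_V$, the T $l$-edges are pairwise parallel and share no vertex with one another, so each sub-component trivially admits a reasonable order in isolation. The key observation is that, when $L(\mathscr T)$ is diagonalizable, the order that processes all of $L_H$ first and then all of $L_V$ is already reasonable for $L(\mathscr T)$, so Proposition~\ref{prop2.2} applies and gives
$$\dim \textsf{CVS}[L(\mathscr T)] = \dim \textsf{CVS}[L_H] + \dim \textsf{CVS}[L_V].$$
Each summand is then read off from Lemma~\ref{lemma3.1}: each horizontal T $l$-edge contributes a Vandermonde system of $d+1$ equations in its $p$ cofactors whose null space has dimension $p-d-1$, so $\dim \textsf{CVS}[L_H]=m(p-d-1)$, and symmetrically $\dim \textsf{CVS}[L_V]=n(q-d-1)$, yielding the claimed sum.

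For the non-diagonalizable case, I would work directly with the full conformality matrix $M(L(\mathscr T))$ and exploit the tensor-product structure. Arrange the cofactors as entries $\delta_{ij}$ of a matrix $D$ indexed by the lattice points $(x_i,y_j)$ of the underlying tensor product. The global conformality conditions then translate into precisely two requirements: the columns of $D$ indexed by horizontal T $l$-edges must lie in the $(p-d-1)$-dimensional kernel $K_x$ of the Vandermonde matrix of the $x$-coordinates, and the rows indexed by vertical T $l$-edges must lie in the analogous $(q-d-1)$-dimensional kernel $K_y$. Using Corollary~\ref{cor3.1} to bring the horizontal and vertical blocks of $M$ into a simultaneous reduced row-echelon form, the surviving constraints couple all of $D$ so tightly that the solution space is identified with the tensor product $K_x\otimes K_y$, from which $\dim \textsf{CVS}[L(\mathscr T)]=(p-d-1)(q-d-1)$ is immediate.

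The main obstacle I anticipate is the non-diagonalizable case. The mono-vertices coming from cross-cuts and rays intersecting $L(\mathscr T)$ introduce additional cofactor variables and produce asymmetric ``holes'' in the matrix $D$, which block any naive tensor-product identification. The heart of the argument will be to show that, after the reduction supplied by Corollary~\ref{cor3.1}, each mono-vertex cofactor is uniquely expressible in terms of the cofactors at the genuine grid intersections, so that these extra variables contribute nothing to the final dimension count. Combined with the fact that non-diagonalizability produces exactly the $(d+1)^2$ polynomial redundancies of bidegree $(d,d)$ among the rows of $M$, this should close the argument and yield the $(p-d-1)(q-d-1)$ formula.
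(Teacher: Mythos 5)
Your diagonalizable case does not actually deliver the formula you claim. Write $\tilde p$ and $\tilde q$ for the numbers of mono-vertices on each horizontal and each vertical T $l$-edge, so $p=\tilde p+n$ and $q=\tilde q+m$. If you apply Proposition~\ref{prop2.2} to the partition $\{L_H,L_V\}$ with $L_H$ the sub-component, then the complement $L_V=L(\mathscr T)\setminus L_H$ retains only the $\tilde q=q-m$ mono-vertices on each vertical edge (the $mn$ multi-vertex cofactors stay with $L_H$), so the proposition yields $m(p-d-1)+n(\tilde q-d-1)$; your ``symmetric'' evaluation $\dim\textsf{CVS}[L_V]=n(q-d-1)$ counts the shared cofactors twice and is not what Proposition~\ref{prop2.2} produces. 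Moreover, the hypothesis of that proposition requires the \emph{reduced} component $L_V$ to have a reasonable order, i.e.\ $\tilde q\ge d+1$; diagonalizability of a tensor product component only guarantees that $\tilde p\ge d+1$ \emph{or} $\tilde q\ge d+1$, so the roles of $L_H$ and $L_V$ may have to be swapped. The paper instead argues this case directly from the full-row-rank criterion ($\tilde p\ge d+1$ or $\tilde q\ge d+1$) without invoking the bipartite partition.

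The decisive gap is in the non-diagonalizable case, which is where essentially all of the paper's proof lives. You correctly identify the obstacle (the mono-vertex cofactors break the clean identification of the solution space with $K_x\otimes K_y$), but you then defer its resolution, and the two claims you offer for closing it do not hold as stated. First, eliminating the mono-vertex cofactors of a horizontal edge $l_j$ is not only a matter of expressing them uniquely in terms of the multi-vertex cofactors: since $\tilde p<d+1$, the $(d+1)\times\tilde p$ Vandermonde block is injective but not surjective, so the elimination leaves behind $d+1-\tilde p$ residual linear conditions on the $n$ multi-vertex cofactors of $l_j$ (the paper's block $\tilde A'$), and likewise $d+1-\tilde q$ residual conditions per vertical edge. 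The entire content of the theorem is the rank of the resulting coupled system $\tilde M$ on the $mn$ multi-vertex variables; the paper computes $\mathrm{rank}(\tilde M)=mn-(d+1-p)(d+1-q)$ by a further explicit reduction (using the interpolation identities $N_l=\sum_k g_{k+\tilde q}(t_l)N_k$ and $N_{ij}^{(1)}=-\sum_k f_{\tilde p+j}(s_{k+\tilde p+1})N_{ik}^{(3)}$ to expose the dependencies), and nothing in your outline substitutes for this computation. Second, your assertion that non-diagonalizability produces ``exactly the $(d+1)^2$ polynomial redundancies'' among the rows of $M$ is incorrect when mono-vertices are present: reading off the paper's rank count, the left kernel of $M$ has dimension $(d+1-\tilde p)(d+1-\tilde q)$, which equals $(d+1)^2$ only in the pure case $\tilde p=\tilde q=0$. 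So the proposal is a reasonable plan whose central step --- the rank computation for the mono-vertex-perturbed tensor system --- is missing, and the shortcuts offered in its place would lead to the wrong count.
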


\begin{proof}
    Let $l_1,l_2,\ldots,l_m$ be all the horizontal T $l$-edges  and $l_{m+1},l_{m+2},\ldots,l_{m+n}$ be all the vertical T $l$-edges of $\mathscr{T}$, respectively.
    Suppose that there are $p$ vertices in each $l_i$, $i=1,2,\ldots,m$ denoted by $v_1^{(i)},v_2^{(i)},\ldots, v_{p}^{(i)}$,  among them $v_{1}^{(i)},v_{2}^{(i)},\ldots,v_{\tilde p}^{(i)}$ are the mono-vertices of $l_i$, where $1\le \tilde p\le p$,  and there are $q$ vertices in each $l_j$, $j=m+1,m+2,\ldots,m+n$ denoted by $v_1^{(j)},v_2^{(j)},\ldots, v_{q}^{(j)}$, among them $v_{\tilde 1}^{(j)},v_{\tilde 2}^{(j)},\ldots,v_{\tilde q}^{(j)}$ are the mono-vertices of $l_j$, 
    where $1\le \tilde q \le q$.
    Let the $x$-coordinates of $v_1^{(i)},v_2^{(i)},\ldots, v_{p}^{(i)}$ be $s_1,s_2,\ldots,s_p$ respectively, and the $y$-coordinates of $v_1^{(j)},v_2^{(j)},\ldots, v_{q}^{(j)}$ be $t_1,t_2,\ldots,t_q$ respectively. Since each horizontal(vertical) T $l$-edge intersects with all the vertical(horizontal) T $l$-edge,  then $m=q-\tilde q$ is the number of multi-vertices on $l_i$, $i=1,2,\ldots, m$, and $n=p-\tilde p$ is the number of multi-vertices on $l_j$, $j=m+1,m+2,\ldots,m+n$. 

    Notice that if $\tilde p\ge d+1$ or $\tilde q\ge d+1$, then the number of mono-vertices of $l_i$($i=1,2,\ldots,m$) or $l_j$($j=m+1,m+2,\ldots, m+n$) is greater or equal to $d+1$. Thus  $L(\mathscr{T})$ has a reasonable order, and in this case the dimension of conformality vector space of $L(\mathscr{T})$ is
    $$\dim\textsf{CVS}[L(\mathscr T)]=mp+nq-(d+1)(m+n)=m(p-d-1)+n(q-d-1).$$
    In the following, we only consider the case of $\tilde q<d+1$ and $\tilde p<d+1$. i.e. $L(\mathscr T)$ is a non-diagonalizable T-connected component.

    Set the ordering of all T $l$-edges as $l_1\succ l_2\succ\ldots\succ l_m\succ l_{m+1}\succ\ldots\succ l_n$, and the ordering of vertices as $v_1^{(i)}\succ v_2^{(i)}\succ\ldots\succ v_{p}^{(i)}$ in each T $l$-edge $l_i$ for $i=1,2,\ldots,m$, and $v_1^{(j)}\succ v_2^{(j)}\succ\ldots\succ v_{\tilde q}^{(j)}\succ\ldots\succ v_{q}^{(j)}$ in each T $l$-edge $l_j$ for $j=m+1,\ldots,m+n$. Then the conformality matrix of $L(\mathscr{T})$ can be written as 
    $$M=\begin{pmatrix}A & O \\ C & B\end{pmatrix},$$
    where $A=\mathrm{diag}\underbrace{(A',\ldots,A')}_{m}$,   $B=\mathrm{diag}\underbrace{(B',\ldots,B')}_{n}$ with
    $$A'=\begin{pmatrix}
        1 & 1 & \ldots & 1\\
        s_1 & s_2 & \ldots & s_p\\
        \vdots & \vdots & \vdots & \vdots\\
        s_1^{d} & s_2^{d} & \ldots & s_p^{d}
    \end{pmatrix},\qquad 
    B'=\begin{pmatrix}
        1 & 1 & \ldots & 1\\
        t_{1} & t_{2} & \ldots & t_{\tilde q}\\
        \vdots & \vdots & \vdots & \vdots\\
        t_{1}^{d} & t_{2}^{d} & \ldots & t_{\tilde q}^{d}
    \end{pmatrix},$$
and
   $$C=\begin{pmatrix}
C_{11} & C_{12} & \ldots & C_{1m}\\
\vdots & \vdots & \vdots & \vdots\\
C_{n1} & C_{n2} & \ldots & C_{nm}\\
    \end{pmatrix}$$
with $C_{ij}\in\mathbb{R}^{(d+1)\times p}$ in the form:
$$\begin{matrix}
   \begin{pmatrix}
     0 & 0 & \ldots & 1 & \ldots & 0\\
     0 & 0 & \ldots & t_{j} & \ldots & 0\\
     \vdots & \vdots & \vdots & \vdots & \vdots & \vdots\\
     0 & 0 & \ldots & t_{j}^d & \ldots & 0
     \end{pmatrix}\\
     \begin{matrix}
        \hphantom{\vdots}&
        \hphantom{\vdots} &
        \hphantom{\vdots} &
    \uparrow &
    \hphantom{\vdots} & \\
    \hphantom{\vdots}&
        \hphantom{\vdots} &
        \hphantom{\vdots} &
    \text{($\tilde p+i$)-th column} &
    \hphantom{\vdots} &
        \end{matrix}
\end{matrix}.$$

 Next we perform elementary row transformations on $M$ in block form, and  by Lemma~\ref{lemma3.1} and Corollary~\ref{cor3.1}, one can obtain the matrix
 $$\bar{M}=\begin{pmatrix}
     \bar{A} & O\\
     \bar{C} & \bar{B}
     \end{pmatrix},$$
  where $\bar{A}=\mathrm{diag}\underbrace{(\bar{A}',\ldots,\bar{A}')}_{m}$ and $\bar{B}=\mathrm{diag}\underbrace{(\bar{B}',\ldots,\bar{B}')}_{n}$. Here   $\bar{A}'=(
      I_{d+1},S_{A})$
with $$S_{\bar{A}}=\begin{pmatrix}
    f_1(s_{d+2}) & f_1(s_{d+3}) & \ldots & f_1(s_{p})\\
    f_2(s_{d+2}) & f_2(s_{d+3}) & \ldots & f_2(s_{p})\\
    \vdots & \vdots & \vdots & \vdots\\
    f_{d+1}(s_{d+2}) & f_{d+1}(s_{d+3}) & \ldots & f_{d+1}(s_{p})\\
\end{pmatrix}$$ 
and $f_i(x)=\prod\limits_{\mbox{\tiny$\begin{array}{c}j=1\\j\neq i\end{array}$}}^{d+1}\frac{(x-s_j)}{(s_i-s_j)},\quad i=1,2,\cdots,d+1$, and $\bar{B}'=(
    I_{\tilde q}, O)^\top$.

The block element $\bar{C}_{ij}$ of the matrix  $$\bar{C}=\begin{pmatrix}
\bar{C}_{11} & \bar{C}_{12} & \ldots & \bar{C}_{1m}\\
\vdots & \vdots & \vdots & \vdots\\
\bar{C}_{n1} & \bar{C}_{n2} & \ldots & \bar{C}_{nm}\\
    \end{pmatrix}$$ 
takes the form
\begin{equation*}
\bar{C}_{ij}=
    \begin{cases}
   \bar{C}_{ij}^{(1)}  & j\le d+1-\tilde q\\
   \bar{C}_{ij}^{(2)}
    & j> d+1-\tilde q
    \end{cases}
\end{equation*}
where $\bar{C}_{i,j}^{(1)}=E_{\tilde q+j,i}$ is the fundamental matrix with the element at $(\tilde q+j,i)$ being $1$ and the others being zero, and $C_{ij}^{(2)}$ is in the form of
$$\begin{matrix}
   \begin{pmatrix}
     0 & 0 & \ldots & g_1(t_j) & \ldots & 0\\
     0 & 0 & \ldots & g_2(t_j) & \ldots & 0\\
     \vdots & \vdots & \vdots & \vdots & \vdots & \vdots\\
     0 & 0 & \ldots & g_{d+1}(t_j) & \ldots & 0\\
 \end{pmatrix}\\
     \begin{matrix}
        \hphantom{\vdots}&
        \hphantom{\vdots} &
        \hphantom{\vdots} &
    \uparrow &
    \hphantom{\vdots} & \\
    \hphantom{\vdots}&
        \hphantom{\vdots} &
        \hphantom{\vdots} &
    \text{($\tilde p+i$)-th column} &
    \hphantom{\vdots} &
        \end{matrix}
\end{matrix}$$
where $g_i(x)=\prod\limits_{\mbox{\tiny$\begin{array}{c}j=1\\j\neq i\end{array}$}}^{d+1}\frac{(x-t_j)}{(t_i-t_j)},\quad i=1,2,\cdots,d+1$. 

Since the first $d+1$ columns in each $\bar{A}'$ form an identity matrix and the first $\tilde q$ rows also form an identity matrix, thus, one can use these identity matrices to eliminate other non-zero elements in the same columns or rows. Finally, after exchanging some rows and columns, one can get the follow matrix
$$\bar{M}'=\begin{pmatrix}
    I_s & O\\
    O & \tilde M
\end{pmatrix}$$
where $s=\tilde p m+\tilde q n$.
The matrix $\tilde M$ can be written as
$$\tilde{M}=\begin{pmatrix}
    \Tilde{A} \\ \Tilde{C}
\end{pmatrix},$$
where $\Tilde{A}=\mathrm{diag}\underbrace{(\tilde A',\ldots,\tilde A')}_{m}\in\mathbb{R}^{m(d+1-\tilde p)\times mn}$ is a quasi-diagonal matrix with $\tilde A'=\left(
    I_{d+1-\tilde p},  S_{\tilde A}
\right)$
and $$S_{\tilde A}=\begin{pmatrix}
    f_{\tilde p+1}(s_{d+2}) & f_{\tilde p+1}(s_{d+3}) & \ldots & f_{\tilde p+1}(s_{p})\\
    \vdots & \vdots & \vdots & \vdots\\
    f_{d+1}(s_{d+2}) & f_{d+1}(s_{d+3}) & \ldots & f_{d+1}(s_{p})
\end{pmatrix}.$$ 
%Here $f_i(x)=\prod\limits_{\mbox{\tiny$\begin{array}{c}j=1\\j\neq i\end{array}$}}^{d+1}\frac{(x-s_j)}{(s_i-s_j)}$ for $i=1,2,\cdots,d+1$. 

 The matrix $\tilde{C}$ is a $n\times m $ block matrix in the form 
$$\tilde{C}=\begin{pmatrix}
\tilde{C}_{11} & \tilde{C}_{12} & \ldots & \tilde{C}_{1m}\\
\vdots & \vdots & \vdots & \vdots\\
\tilde{C}_{n1} & \tilde{C}_{n2} & \ldots & \tilde{C}_{nm}\\
    \end{pmatrix},$$ 
where $\tilde{C}_{ij}\in\mathbb{R}^{(d+1-\tilde q)}\times n$ and
\begin{equation*}
\tilde{C}_{ij}=
    \begin{cases}
  \tilde{C}_{ij}^{(1)},  & j\le d+1-\tilde q\\
  \tilde{C}_{ij}^{(2)}, & j> d+1-\tilde q
    \end{cases}
\end{equation*}
Here $\tilde{C}_{ij}^{(1)}=E_{ji}$ is the fundamental matrix with the $(j,i)$ element being $1$ and the rest elements being $0$, and $\tilde{C}_{ij}^{(2)}$ is in the form of 
$$\begin{matrix}
    \begin{pmatrix}
     0 & 0 & \ldots & g_{\tilde q+1}(t_j) & \ldots & 0 & 0\\
     0 & 0 & \ldots & g_{\tilde q+2}(t_j) & \ldots & 0 & 0\\
     \vdots & \vdots & \vdots & \vdots & \vdots & \vdots & \vdots\\
     0 & 0 & \ldots & g_{d+1}(t_j) & \ldots & 0 & 0\\
 \end{pmatrix}\\
\uparrow && \\
\text{$i$-th column} &&
 \end{matrix}.$$
% where
%$g_i(x)=\prod\limits_{\mbox{\tiny$\begin{array}{c}j=1\\j\neq i\end{array}$}}^{d+1}\frac{(x-t_j)}{(t_i-t_j)},\quad i=1,2,\cdots,d+1$. 

In order to determine the rank of $\tilde M$, we perform elementary transformations on $\tilde M$. Write $\tilde M=\begin{pmatrix}
    \tilde M_1 & \tilde M_2 & \ldots & \tilde M_m
\end{pmatrix}$,
here $\tilde M_i=(O,(\tilde{A}^\prime)^{\top}, *)^{\top}\in\mathbb{R}^{(m(d+1-\tilde p)+n(d+1-\tilde q))\times n}$ for $i=1,2,\ldots,m$. For each ${\tilde M}_i$, we use the first $d+1-\tilde p$ columns of $\tilde A'$ (i.e., the identity matrix) to perform row operations to reduce the other non-zero elements in the same columns and obtain the reduced matrix (we still use $\tilde M_i$ to denote it) 
$$\tilde M_i=\begin{pmatrix}
    O_{((i-1)(d+1-\tilde p))\times((d+1-\tilde p))} & O_{((i-1)(d+1-\tilde p))\times(p-d-1)}\\
    I_{(d+1-\tilde p)\times(d+1-\tilde p)}& S_{\tilde A}\\
    O_{((m-i)(d+1-\tilde p))\times(d+1-\tilde p)} & O_{((m-i)(d+1-\tilde p))\times(p-d-1)}\\
    O_{n(d+1-\tilde q)\times(d+1-\tilde q)} & N_i
\end{pmatrix},$$
with $N_i\in\mathbb{R}^{(n(d+1-\tilde q))\times(p-d-1)}$.  
The structure of the matrix $N_i$ is as follows. Write $N_i=\begin{pmatrix}
         N_{i1}^{\top} & N_{i2}^{\top} & \ldots & N_{in}^{\top} 
     \end{pmatrix}^{\top}$, here $N_{ij}\in\mathbb{R}^{(d+1-\tilde q)\times(p-d-1)}$($j=1,2,\ldots,n$) takes the form
     \begin{equation*}
         N_{ij}=
         \begin{cases}
         N_{ij}^{(1)}, & 1\le i\le d+1-\tilde q, \quad j<d+1-\tilde p\\
         N_{ij}^{(2)},
      & i> d+1-\tilde q, \quad j<d+1-\tilde p\\
       N_{ij}^{(3)}, & 1\le i\le d+1-\tilde q,\quad j\ge d+1-\tilde p\\
       N_{ij}^{(4)}, & i>d+1-\tilde q,\quad j\ge d+1-\tilde p
     
         \end{cases}
    \end{equation*}
    where $$N_{ij}^{(1)}=\begin{matrix}
    \begin{pmatrix}
         0 & 0 & \ldots & 0\\
         0 & 0 & \ldots & 0\\
         \vdots & \vdots & \vdots & \vdots\\
         -f_{{\tilde p}+j}(s_{d+2}) & -f_{{\tilde p}+j}(s_{d+3}) & \ldots & -f_{{\tilde p}+j}(s_{p})\\
         \vdots & \vdots & \vdots & \vdots\\
         0 & 0 & \ldots & 0\\
         0 & 0 & \ldots & 0
     \end{pmatrix} & \leftarrow & \text{the $i$-th row} 
     \end{matrix}$$
     $$N_{ij}^{(2)}=\begin{pmatrix}
         -g_{\tilde q+1}(t_i)f_{\tilde p+j}(s_{d+2}) & \ldots & -g_{\tilde q+1}(t_i)f_{\tilde p+j}(s_{p})\\
         \vdots &  \vdots & \vdots\\
         -g_{d+1}(t_i)f_{\tilde p+j}(s_{d+2}) &  \ldots & -g_{d+1}(t_i)f_{\tilde p+j}(s_{p})
     \end{pmatrix}$$
     $N_{ij}^{(3)}=E_{j+\tilde p-d-1,i}$ is the fundamental matrix, and $N_{ij}^{(4)}$ is in the form of
     $$\begin{matrix}
    \begin{pmatrix}
     0 & 0 & \ldots & g_{\tilde q+1}(t_i) & \ldots & 0\\
     0 & 0 & \ldots & g_{\tilde q+2}(t_i) & \ldots & 0\\
     \vdots & \vdots & \vdots & \vdots & \vdots & \vdots\\
     0 & 0 & \ldots & g_{d+1}(t_i) & \ldots & 0\\
 \end{pmatrix}\\
\uparrow && \\
\text{the $(j+\tilde p-d-1)$-th column} &&
 \end{matrix}.$$

Now we continue to perform  elementary   transformations to  simplify $\tilde M$. First, for each $\tilde M_i$, we apply the order $d+1-\tilde p$ identity matrix in $\tilde M_i$ to eliminate $S_{\tilde A}$ by column elementary transformations, which doesn't influence the matrix $N_i$. 
Obviously, we have $$rank(\tilde M_i)=d+1-\tilde p+rank(N_i).$$

Next we continue to simplify $N_i$. 
Notice that 
$N_l$ ($l>d+1-\tilde q$) can be expressed as a linear combination of $N_1,N_2,\ldots,N_{d+1-\tilde q}$ 
$$N_l=\sum\limits_{k=1}^{d+1-\tilde q}g_{k+\tilde q}(t_l)N_k,$$
 then one can apply $N_1,N_2,\ldots,N_{d+1-\tilde q}$ (which involve $N_{ij}^{(1)}$ and $N_{ij}^{(3)}$ only) to eliminate $N_l$ for $l>d+1-\tilde q$ (which involve
 $N_{ij}^{(2)}$ and $N_{ij}^{(4)}$ only) by performing column elementary transformations.  Let $\tilde N=(N_1,N_2,\ldots, N_{d+1-\tilde q})$, then
$$
rank(\tilde M)=m(d+1-\tilde p)+rank(\tilde N).$$

Finally we need to simplify
$\tilde N=\begin{pmatrix} \tilde N_1 \\ \tilde N_3\end{pmatrix}$, where 
$\tilde N_1=\left(N_{ij}^{(1)}\right)$
($1\le i\le d+1-\tilde q$, $1\le j\le d-\tilde p$) and $\tilde N_3=\left(N_{ij}^{(3)}\right)$ ($ 1\le i\le d+1-\tilde q$, $d+1-\tilde p\le j\le n$).

From $$N_{ij}^{(1)}=\sum\limits_{k=d+1-\tilde p}^{n}-f_{\tilde p+j}(s_{k+\tilde p+1})N_{ik}^{(3)},$$ 
$\tilde N_1$ can be eliminated with $\tilde N_3$ through row elementary transformations. At last, the matrix $\tilde M$ is reduced to $\mathrm{diag}(I_r,O)$, and 
$$rank(\tilde M)=m(d+1-\tilde p)+rank(\tilde N_3)=r=m(d+1-\tilde p)+(d+1-\tilde q)(p-d-1).$$
 Notice that $m=q-\tilde q$ and $n=p-\tilde p$, then $rank(\tilde M)$ can be simplified as 
$$rank(\tilde M)=mn-(d+1-p)(d+1-q).$$
Since there are $mn$ multi-vertices in the tensor product T-connected component $L(\mathscr{T})$,
thus
\begin{align*}
\dim\textsf{CVS}[L(\mathscr{T})] & =mn-rank(\tilde M) \\
& =(p-d-1)(q-d-1).
\end{align*}  $\Box$
\end{proof}

It should be emphasized that Theorem~\ref{thm3.1} is a completely new result. Prior work such as reference~\cite{dim2014}, only provided upper and lower bounds on the dimension of the spline space. While these bounds enabled dimension calculations for specific cases, such as Example 5.2 in~\cite{dim2014}, they did not yield a general dimension formula. Theorem~\ref{thm3.1} offers the first systematic derivation of such a formula. The proof demonstrates that, even for the simplest tensor product T-connected components, the smooth cofactor method encounters significant challenges in addressing non-diagonalizable cases. Consequently, Theorem~\ref{thm3.1} indicates that the complexity of determining the dimension of spline space primarily arises from the non-diagonalizable part of the T-connected component.

In the following, we will concentrate on splines over T-meshes that have a hierarchical structure and the tensor product T-connected component will be considered as a type of subdivision mode, referred to as the \textbf{tensor product subdivision}. 
A tensor product subdivision is called an \textbf{$(m,n)$-tensor product subdivision} if each cell of a $m\times n$ tensor product submesh is subdivided. The tensor product subdivision can be considered as a generalization of several other proposed subdivision modes, including cross subdivision~\cite{jin2013}, $(m,n)$-subdivision~\cite{wu2012}, $m\times n$ division~\cite{zeng2015}, etc. We will demonstrate that under the non-degenerate tensor product subdivision mode, the dimension of the conformality vector space associated with the T-connected component of the hierarchical T-meshes can be computed by summing up the dimension of the conformality vector space associated with T-connected component in each level. The term "non-degenerate" means that none of the $l$-edges in the tensor product subdivision are vanished edges.
We will assume the subdivision is non-degenerate throughout the paper. 

\begin{corollary}\label{cor3.3}
For a hierarchical T-mesh $\mathscr{T}$ under  $(d-1)\times (d-1)$ tensor product subdivisions that are mutually disjoint in each level $i$, the dimension of the conformality vector space can be calculated in a recursive manner, i.e.
$$\dim\textsf{CVS}[N_{i-1}]=\dim\textsf{CVS}[T_i]+\dim\textsf{CVS}[N_i],$$
where $N_i$ and $T_i$ are defined in subsection 2.1
\end{corollary}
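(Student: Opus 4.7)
The plan is to combine Theorem~\ref{thm3.1}, Proposition~\ref{prop2.2}, and Lemma~\ref{lem2.1} via a block-matrix analysis together with a coordinated extension of the level $i$ T $l$-edges. First I would exploit the disjointness hypothesis: since each $(d-1)\times(d-1)$ tensor product subdivision is confined within a single parent cell, every T $l$-edge of level $>i$ lies inside one subcell created at level~$i$ and can therefore meet a level-$i$ T $l$-edge only at one of its T-node endpoints, never at an interior crossing. Ordering rows and columns of the conformality matrix of $N_{i-1}$ so that level-$i$ edges/vertices precede edges/vertices of level $>i$, this structural observation forces the matrix into the block upper-triangular form
$$
M\;=\;\begin{pmatrix} A & B \\ 0 & D \end{pmatrix},
$$
where $A=M(T_i)$ is the conformality matrix of $T_i$, $D=M(N_i)$ that of $N_i$, and the coupling block $B$ records how the T-node landings of level $>i$ edges on level-$i$ edges enter the level-$i$ conformality equations. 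The desired identity is then equivalent to $\mathrm{rank}(M)=\mathrm{rank}(A)+\mathrm{rank}(D)$.

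To establish this rank identity I would apply Lemma~\ref{lem2.1} to the bipartite partition $\{L_1,L_2\}=\{T_i,N_i\}$. Explicitly, I would construct an enlarged T-mesh $\mathscr{T}_2$ by extending each level-$i$ T $l$-edge $l\in T_i$ into the adjacent subcells that contain $N_i$ structure. The extensions would be designed in coordinated pairs so that (a) each extended edge $l'$ remains a T $l$-edge in $\mathscr{T}_2$, its new endpoints arising as T-nodes produced by the mutual extensions themselves rather than by introducing auxiliary edges, so that $\{L_1',L_2\}$ is still a bipartite partition of $L(\mathscr{T}_2)$; and (b) every edge of $N_i$ picks up at least one additional vertex in $\mathscr{T}_2$ from a transverse crossing with some extended $l'$. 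Under the $(d-1)\times(d-1)$ hypothesis each $N_i$ edge has exactly $d$ vertices in $\mathscr{T}_1$ that are not on any other $N_i$ edge (namely the two T-node endpoints plus the $d-2$ crossings with edges of level $\leq i-1$ interior to the subcell), so adding one extra mono-vertex per edge brings this count up to $d+1$. Consequently $N_i$ admits a reasonable order in $\mathscr{T}_2$, Proposition~\ref{prop2.2} yields
$$
\dim\textsf{CVS}[L(\mathscr{T}_2)]\;=\;\dim\textsf{CVS}[L_1']+\dim\textsf{CVS}[N_i],
$$
and Lemma~\ref{lem2.1} converts this into $\dim\textsf{CVS}[N_{i-1}]=\dim\textsf{CVS}[T_i]+\dim\textsf{CVS}[N_i]$, which is the claim.

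The main obstacle will be the explicit construction of this coordinated extension. One must simultaneously arrange that (i) no T $l$-edge appears in $L(\mathscr{T}_2)$ outside $L_1'\cup L_2$, else the bipartite-partition hypothesis of Lemma~\ref{lem2.1} fails; (ii) the paired extensions genuinely yield T-nodes at their new endpoints; and (iii) every edge of $N_i$ receives its auxiliary crossing. Degenerate configurations — adjacent level-$i$ subdivisions separated by only a single subcell, cascades of subdivisions across several levels, or subdivisions near the boundary of $\mathscr{T}$ — will require careful case analysis, and verifying that a suitable extension always exists under the mutually disjoint $(d-1)\times(d-1)$ hypothesis is the combinatorial crux of the argument.
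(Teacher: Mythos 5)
Your overall strategy --- enlarge the mesh so that one side of the bipartite partition acquires a reasonable order, invoke Proposition~\ref{prop2.2}, and transfer back with Lemma~\ref{lem2.1} --- is exactly the skeleton of the paper's argument, and your count that each edge of $N_i$ carries only $d$ mono-vertices (one short of the threshold $d+1$) correctly identifies the obstruction. But the mechanism you propose for supplying the missing vertex cannot work. A level-$i$ T $l$-edge is a bisector that already spans the entire $(d-1)\times(d-1)$ subdivided block; ``extending'' it means prolonging it along its own line, which carries it out of the block into unsubdivided level-$(i-1)$ cells where no $N_i$ structure exists. It cannot be pushed ``into the adjacent subcells'': it already bounds them, and a prolongation never enters their interiors. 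Hence no new transverse crossings with $N_i$ edges are created, and requirement (b) of your construction fails for every edge of $N_i$. Your preliminary structural claim is also false for $d\ge 3$: a level-$(i+1)$ T $l$-edge spans $d-1$ consecutive level-$i$ subcells and therefore crosses level-$i$ bisectors at points interior to both edges, so the conformality matrix is genuinely coupled through multi-vertices and is not block upper-triangular.

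The paper resolves the difficulty by extending the \emph{other} side of the partition. It completes the level-$(i+1)$ subdivision to one, $N_1'$, that subdivides all $4(d-1)^2$ level-$i$ cells; each edge of $N_1$ then extends to an edge of $N_1'$ with $2d-1\ge d+1$ mono-vertices, so $N_1'$ has a reasonable order, while $T_1$ is left untouched. Additivity for the completed mesh $\mathscr{T}'$ is verified by direct computation with Theorem~\ref{thm3.1}: $\dim\textsf{CVS}[L(\mathscr{T}')]=(3d-4)^2$, $\dim\textsf{CVS}[T_1]=(d-2)^2$, $\dim\textsf{CVS}[N_1']=4(d-1)(2d-3)$, and $(d-2)^2+4(d-1)(2d-3)=(3d-4)^2$. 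Lemma~\ref{lem2.1}, applied with $L_1=N_1$ extended to $N_1'$ and $L_2=T_1$ fixed, then yields the identity for the original mesh; disjointness lets one treat the blocks independently and recurse on the level. If you want to keep your outline, replace the extension of $T_i$ by this completion of $N_i$.
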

\begin{proof}
Since the $(d-1)\times (d-1)$ tensor product subdivisions are mutually disjoint in each level, they can be handled independently. To simplify the proof, let's assume that there is only one such tensor product subdivision in each level.
We will prove the case for $lev(\mathscr{T})=2$ 
and $i=1$, as the general case for higher levels can be established through a recursive argument.

Let $N'_1$ be the subdivision in level 2 that subdivides all $4(d-1)^2$ cells in level 1. Hence, $N_1\subset N'_1$. Let $\mathscr{T}'$ be the T-mesh that includes $T_1$ and $N'_1$ as its T-connected component.
If we can prove that $\dim\textsf{CVS}[L(\mathscr{T}')]=\dim\textsf{CVS}[T_1]+\dim\textsf{CVS}[N'_1]$, the conclusion will follow from Lemma~\ref{lem2.1}.

Since $L(\mathscr{T}')$ forms a tensor product T-connected component with $4d-3$ vertices on each T $l$-edge, it is  completely non-diagonalizable. Therefore, by Theorem~\ref{thm3.1},
$$\dim\textsf{CVS}[L(\mathscr{T}')]=[4d-3-(d+1)][4d-3-(d+1)]=(3d-4)^2.$$

On the other hand, all the T $l$-edges in $T_1$ form a tensor product T-connected component with $2d-1$ vertices on each T $l$-edge, and it is also  completely non-diagonalizable. By Theorem~\ref{thm3.1},
$$\dim\textsf{CVS}[T_1]=[2d-1-(d+1)][2d-1-(d+1)]=(d-2)^2.$$

All the T $l$-edges in $N'_1$ form a tensor product T-connected component with $4d-3$ vertices on each T $l$-edge. Since the number of mono-vertices on each T $l$-edge is greater than or equal to $d+1$, all the T $l$-edges in $N'_1$ have a reasonable order. Hence,
$$\dim\textsf{CVS}[N'_1]=v_{N'_1}-(d+1)t_{N'_1},$$
where $v_{N'_1}$ is the number of vertices in $N'_1$ and $t_{N'_1}$ is the number of all the T $l$-edges in $N'_1$.
Since $v_{N'_1}=4(d-1)(3d-2)$ and $t_{N'_1}=4(d-1)$,
$$\dim\textsf{CVS}[N'_1]=4(d-1)(3d-2)-4(d-1)(d+1)=4(d-1)(2d-3).$$

Therefore, \begin{align*}
    \dim\textsf{CVS}[L(\mathscr{T}')]&=(3d-4)^2\\
    &=(d-2)^2+4(d-1)(2d-3)\\
    &=\dim\textsf{CVS}[T_1]+\dim\textsf{CVS}[N'_1].
\end{align*} 
By Lemma~\ref{lem2.1}, the conclusion holds in the general case.      $\Box$
\end{proof}

%In Section 4, we will concentrate on the hierarchical T-mesh referred to in Corollary~\ref{cor3.3}. 

Next, we will extend the result in Corollary~\ref{cor3.3} to general tensor product subdivisions. Before presenting the central theorem, we will make necessary preparations.

\begin{lemma}\label{lem3.2}(\cite{zeng2015})\label{lem5.2}
   Given a T-mesh $\mathscr{T}$, suppose $L(\mathscr{T})$ is the T-connected component of $\mathscr{T}$ and ${L_1, L_2}$ is a bipartite partition of $L(\mathscr{T})$ with $L_1 \subset L(\mathscr{T})$. Then
$$\dim\textsf{CVS}[L(\mathscr{T})]\le\dim\textsf{CVS}[L_1]+\dim\textsf{CVS}[L_2]$$
\end{lemma}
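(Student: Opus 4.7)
The plan is to establish the inequality by rank-nullity applied to a natural restriction map between the two conformality vector spaces. Concretely, I would define
$$\Phi : \textsf{CVS}[L(\mathscr{T})] \to \textsf{CVS}[L_1], \qquad \boldsymbol{\delta} \mapsto \boldsymbol{\delta}\big|_{L_1},$$
where $\boldsymbol{\delta}|_{L_1}$ denotes the sub-vector of cofactors indexed by vertices lying on T $l$-edges of $L_1$. The desired bound will follow once I show that $\Phi$ is well-defined, linear, and has kernel injecting into $\textsf{CVS}[L_2]$.

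First I would verify that $\Phi$ is well-defined. Since $L_1$ is a sub-component, every T $l$-edge $l \in L_1$ retains all of its vertices in $L_1$, so the global conformality equation $\mathscr{P}_l = 0$ over $\boldsymbol{\delta}|_{L_1}$ is literally the same linear combination that $\boldsymbol{\delta}$ already satisfies as part of the full system on $L(\mathscr{T})$. Hence $\boldsymbol{\delta}|_{L_1} \in \textsf{CVS}[L_1]$ automatically. Next I would identify $\ker(\Phi)$: its elements have every cofactor on a vertex of $L_1$ equal to zero. For any $l \in L_2 = L(\mathscr{T}) \setminus L_1$, the equation $\mathscr{P}_l = 0$ reduces to $\sum_{v_i \in l \setminus L_1} \delta_i (x - x_i)^d = 0$, which is precisely the defining equation of $\textsf{CVS}[L_2]$, since the cofactor coordinates of $L_2$ are indexed by exactly those vertices remaining after the shared vertices with $L_1$ have been removed. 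Projecting $\ker(\Phi)$ onto these remaining coordinates is therefore an injection into $\textsf{CVS}[L_2]$, and rank-nullity gives
$$\dim\textsf{CVS}[L(\mathscr{T})] \;=\; \dim\,\mathrm{Im}(\Phi) + \dim\ker(\Phi) \;\le\; \dim\textsf{CVS}[L_1] + \dim\textsf{CVS}[L_2].$$

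The main subtlety is the bookkeeping around vertices shared between $L_1$ and $L_2$. As the example accompanying Figure~\ref{fig6} illustrates, when a multi-vertex lies on edges of both $L_1$ and $L_2$, it is assigned to $L_1$ and dropped from $L_2$, so $L_2$ is generally not a sub-component and the conformality equations defining $\textsf{CVS}[L_2]$ omit the terms from such shared vertices. I would have to confirm that this omission matches exactly what happens when the $L_1$-cofactors at those shared vertices are forced to zero in the full system on $L(\mathscr{T})$; once this matching is pinned down, the rank-nullity step above closes the proof without any further computation, and one sees in particular that equality holds precisely when the projection $\ker(\Phi) \to \textsf{CVS}[L_2]$ is surjective, which motivates the stronger equality criteria recorded in Proposition~\ref{prop2.2}.
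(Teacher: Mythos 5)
The paper itself does not prove this lemma --- it is imported from \cite{zeng2015} as a known result --- so there is no in-paper argument to compare against; judged on its own, your proof is correct and self-contained. Because $L_1$ is a sub-component, every vertex on an $L_1$-edge carries an $L_1$-cofactor, so the conformality matrix of $L(\mathscr{T})$ is block lower triangular with diagonal blocks $M(L_1)$ and $M(L_2)$ (columns ordered as $L_1$-vertices then $L_2$-vertices, rows as $L_1$-equations then $L_2$-equations); your restriction map $\Phi$ and its kernel are exactly the linear-algebra reformulation of that block structure, and rank--nullity gives the inequality. The bookkeeping you flag about shared multi-vertices is resolved the right way: such a vertex is assigned to $L_1$ and deleted from $L_2$, so forcing its cofactor to zero in an element of $\ker(\Phi)$ turns each equation $\mathscr{P}_l=0$, $l\in L_2$, into precisely the defining equation of $\textsf{CVS}[L_2]$.

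One inaccuracy in your closing aside: the projection $\ker(\Phi)\to\textsf{CVS}[L_2]$ is not merely injective but always bijective. Given any $\boldsymbol{\delta}_2\in\textsf{CVS}[L_2]$, padding it with zeros on the $L_1$-coordinates satisfies the $L_1$-equations trivially and the $L_2$-equations by hypothesis, hence lies in $\ker(\Phi)$. Consequently equality in the lemma holds precisely when $\Phi$ maps onto all of $\textsf{CVS}[L_1]$, not when $\ker(\Phi)\to\textsf{CVS}[L_2]$ is surjective (which is automatic). This does not affect the validity of your proof of the inequality, which only uses $\dim\ker(\Phi)\le\dim\textsf{CVS}[L_2]$, but the equality criterion you record is the wrong one and should not be carried forward as the motivation for Proposition~\ref{prop2.2}.
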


The following lemma illustrates a property of the spline space with homogeneous boundary conditions.

\begin{lemma}(\cite{zeng2015})\label{lem3.3}
    Given a T-mesh $\mathscr{T}$, let $\mathscr{T}'$ be the tensor-product mesh  formed by all  the boundary l-edges and cross-cuts of $\mathscr T$, and $L$ be the set of all the rays and T l-edges of $\mathscr{T}$. If $\mathscr{T}'$ has at least $d+1$ horizontal $l$-edges and $d+1$ vertical $l$-edges, then 
    $$\dim\bar{S}_d(\mathscr{T})=\dim\bar{S}_d(\mathscr{T}')+\dim \textsf{CVS}[L].$$
\end{lemma}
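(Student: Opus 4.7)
The plan is to establish the dimension identity by constructing a short exact sequence
$$0 \longrightarrow \bar{S}_d(\mathscr{T}') \longrightarrow \bar{S}_d(\mathscr{T}) \stackrel{\Phi}{\longrightarrow} \textsf{CVS}[L] \longrightarrow 0$$
and then applying rank-nullity. The left-hand inclusion is free: since $\mathscr{T}'$ is obtained from $\mathscr{T}$ by keeping only the boundary and cross-cuts, every cell of $\mathscr{T}'$ is a union of cells of $\mathscr{T}$, so any $\bar{S}_d(\mathscr{T}')$-function is automatically piecewise polynomial on $\mathscr{T}$ with the required $C^{d-1,d-1}$ smoothness and with trivial extension outside $\Omega$.

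The middle map $\Phi$ is defined through the smoothing cofactor method. For any $s\in\bar{S}_d(\mathscr{T})$ and any interior vertex $v$ lying on an $l$-edge of $L$, apply the local conformality condition (\ref{eq1}) around $v$ to extract the vertex cofactor $\delta_v(s)\in\mathbb{R}$; set $\Phi(s):=(\delta_v(s))_v$. Because $s$ is a globally $C^{d-1,d-1}$ spline, the global conformality linear system $\mathscr{P}_{l_i}=0$ holds along every T $l$-edge of $L$, so $\Phi(s)$ lies in $\textsf{CVS}[L]$ by Definition~\ref{def2.3}. Linearity of $\Phi$ is immediate from the local conformality equations.

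Identification of $\ker\Phi$ with $\bar{S}_d(\mathscr{T}')$ is the second ingredient. If $\Phi(s)=\mathbf{0}$, every vertex cofactor on $L$ vanishes. I would argue that in combination with the homogeneous boundary condition — which kills any polynomial extension across a ray into $\mathbb{R}^2\setminus\Omega$ — the edge cofactors $\gamma_l$ on rays and T $l$-edges of $L$ are forced to vanish identically: on rays this uses that the cofactor polynomial must vanish on one boundary endpoint of the ray, and on T $l$-edges the chain of trivial vertex cofactors propagates from the endpoints inwards. Hence $s$ is $C^{d-1,d-1}$ across every edge of $L$, i.e.\ $s$ is piecewise polynomial on the coarsening $\mathscr{T}'$, establishing $\ker\Phi\subseteq\bar{S}_d(\mathscr{T}')$; the reverse inclusion is trivial since functions in $\bar{S}_d(\mathscr{T}')$ are smooth across every edge of $L$.

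The main obstacle is surjectivity of $\Phi$. Given an arbitrary $\boldsymbol{\delta}\in\textsf{CVS}[L]$, I must construct $s\in\bar{S}_d(\mathscr{T})$ realising it. The natural ansatz is a truncated-power expansion
$$s(x,y)=\sum_{v\in L}\delta_v\,(x-x_v)_+^d(y-y_v)_+^d \;+\; \text{correction terms along rays and cross-cuts},$$
where the conformality conditions on T $l$-edges (built into $\boldsymbol{\delta}\in\textsf{CVS}[L]$) guarantee that the jump discontinuities at vertices telescope correctly, yielding the required $C^{d-1,d-1}$ smoothness across the $l$-edges of $L$. The delicate point is enforcing the homogeneous boundary condition $s\equiv 0$ on $\mathbb{R}^2\setminus\Omega$: the raw truncated-power sum leaves polynomial remnants in the boundary strips of $\Omega$, and these must be cancelled by adjusting the polynomial cofactors attached to the cross-cuts of $\mathscr{T}'$. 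This is where the hypothesis that $\mathscr{T}'$ carries at least $d+1$ horizontal and $d+1$ vertical $l$-edges enters: it provides enough cross-cut degrees of freedom (and the Vandermonde-type invertibility of Lemma~\ref{lemma3.1}) to solve the resulting linear system of boundary-cancellation equations. Verifying that this cancellation is always feasible, and that it does not interfere with the prescribed vertex cofactors, is the technical heart of the argument; once it is in place, rank-nullity yields $\dim\bar{S}_d(\mathscr{T})=\dim\bar{S}_d(\mathscr{T}')+\dim\textsf{CVS}[L]$.
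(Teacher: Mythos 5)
You should first note that the paper does not prove this lemma at all: it is imported verbatim from \cite{zeng2015}, so there is no in-paper argument to match. Judged on its own merits, your architecture --- the sequence $0\to\bar{S}_d(\mathscr{T}')\to\bar{S}_d(\mathscr{T})\xrightarrow{\Phi}\textsf{CVS}[L]\to 0$ with $\Phi$ the vertex-cofactor map, followed by rank--nullity --- is the standard smoothing-cofactor route and is the right skeleton; the inclusion and the kernel identification are essentially sound (with the minor remark that for a ray one propagates the vanishing of the edge cofactor from the interior T-node endpoint outward, not from the boundary inward).

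The genuine gap is exactness at the right, and it has two parts. First, $\operatorname{im}\Phi\subseteq\textsf{CVS}[L]$ is not actually established: you verify the conditions $\mathscr{P}_l=0$ only along T $l$-edges, but $L$ also contains rays, and for the homogeneous space the condition a spline imposes along a ray with boundary endpoint $x_b$ is the weaker one $\sum_i\delta_i(x-x_i)^d\in\mathrm{span}\{(x-x_b)^d\}$ rather than $\sum_i\delta_i(x-x_i)^d=0$; without fixing which conditions define $\textsf{CVS}[L]$ for rays, the target of $\Phi$ is not pinned down and the count can be off by one per ray. Second, and more seriously, surjectivity --- the only place the hypothesis of at least $d+1$ horizontal and $d+1$ vertical $l$-edges in $\mathscr{T}'$ enters --- is not proved but deferred as ``the technical heart.'' Your truncated-power ansatz leaves, in the region beyond the support of $L$, a residual bidegree-$(d,d)$ polynomial $\sum_v\delta_v(x-x_v)^d(y-y_v)^d$ that must be cancelled by corrections of the form $p(y)(x-a_j)_+^d$ and $q(x)(y-b_k)_+^d$ attached to the cross-cuts and boundary lines, and one must show (via the Vandermonde structure of Lemma~\ref{lemma3.1}) that this linear system is solvable and that the corrections leave the prescribed vertex cofactors on $L$ untouched. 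Without that step your argument only yields the inequality $\dim\bar{S}_d(\mathscr{T})\le\dim\bar{S}_d(\mathscr{T}')+\dim\textsf{CVS}[L]$ (the analogue of Lemma~\ref{lem3.2}), not the claimed equality, so the proposal as written does not prove the lemma.
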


Now the main theorem and its proof are outlined below, and the methodology used is comparable to the one presented in~\cite{zeng2015}.

\begin{theorem}\label{thm3.2}
    Given a hierarchical T-mesh $\mathscr{T}$ under the tensor product subdivision mode, and consider the spline space $S_d(\mathscr{T})$. If there is only one tensor product T-connected component in level $1$ and there are at least $d+1$ horizontal and $d+1$ vertical $l$-edges in level 0, then we have the following equality concerning the dimension of the conformality vector space:
$$\dim\textsf{CVS}[N_{0}]=\dim\textsf{CVS}[T_{1}]+\dim\textsf{CVS}[N_{1}].$$
\end{theorem}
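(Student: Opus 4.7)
My plan is to prove the two inequalities $\dim\textsf{CVS}[N_0]\le\dim\textsf{CVS}[T_1]+\dim\textsf{CVS}[N_1]$ and $\dim\textsf{CVS}[N_0]\ge\dim\textsf{CVS}[T_1]+\dim\textsf{CVS}[N_1]$ separately, following the methodology of Zeng et al.

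\textbf{The upper bound} is immediate. Since $\{T_1,N_1\}$ is a bipartite partition of $L(\mathscr T)=N_0$ with $T_1\subset L(\mathscr T)$, Lemma \ref{lem3.2} gives
$$\dim\textsf{CVS}[N_0]\le\dim\textsf{CVS}[T_1]+\dim\textsf{CVS}[N_1].$$

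\textbf{The lower bound} requires more work. My first step is a reduction using Proposition \ref{prop2.1}. Since every tensor product subdivision only inserts T $l$-edges and places every newly introduced vertex on some T $l$-edge, the meshes $\mathscr T$, $\mathscr T_1$ (the sub-hierarchy obtained by stopping at level $1$), and $\mathscr T_0$ (the level-0 tensor product mesh) share the same cross-cut count $c$ and the same reduced vertex count $n_v$. Applying Proposition \ref{prop2.1} to both $\mathscr T$ and $\mathscr T_1$ and subtracting, I obtain
$$\dim S_d(\mathscr T)-\dim S_d(\mathscr T_1)=\dim\textsf{CVS}[N_0]-\dim\textsf{CVS}[T_1],$$
so the lower bound reduces to showing $\dim S_d(\mathscr T)-\dim S_d(\mathscr T_1)\ge\dim\textsf{CVS}[N_1]$.

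\textbf{The main step} is to establish this inequality via an auxiliary mesh and Lemma \ref{lem3.3}. I would construct $\tilde{\mathscr T}$ from $\mathscr T$ by extending each T $l$-edge of $T_1$ all the way to the boundary of $\mathscr T_0$, turning it into a cross-cut; call the enlarged level-0 tensor product mesh $\tilde{\mathscr T}_0$ and note that $L(\tilde{\mathscr T})=N_1$ because the extended edges are no longer T $l$-edges. Because $\tilde{\mathscr T}_0$ still has at least $d+1$ horizontal and vertical $l$-edges (it contains $\mathscr T_0$), Lemma \ref{lem3.3} applied with Theorem \ref{thm2.1} gives
$$\dim S_d(\tilde{\mathscr T})=\dim\bar S_d(\bar{\tilde{\mathscr T}}_0)+\dim\textsf{CVS}[N_1].$$
Similarly, viewing $\mathscr T_1$ as a T-mesh whose only T $l$-edges are $T_1$, Lemma \ref{lem3.3} applied to $\mathscr T_1$ expresses $\dim S_d(\mathscr T_1)$ in terms of $\dim\bar S_d(\bar{\mathscr T}_0)$ and $\dim\textsf{CVS}[T_1]$. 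The crucial observation I would then verify is that the extensions of $T_1$ live entirely in portions of $\mathscr T$ that carry no further subdivisions (since $N_1$ is confined to the subdivided level-1 cells). Thus the passage $\mathscr T\to\tilde{\mathscr T}$ corresponds, on $\mathscr T_0$, to the same tensor-product refinement as the passage $\mathscr T_1\to$(image of $T_1$ as cross-cuts), producing the identity
$$\dim S_d(\tilde{\mathscr T})-\dim S_d(\mathscr T)=\dim\bar S_d(\bar{\tilde{\mathscr T}}_0)-\dim\bar S_d(\bar{\mathscr T}_0)-\dim\textsf{CVS}[T_1],$$
which, combined with the previous two displayed equalities, yields the desired lower bound.

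\textbf{The main obstacle} is precisely this last identity: it requires a delicate dimension count matching the newly refined tensor-product cells in $\tilde{\mathscr T}_0\setminus\mathscr T_0$ against the $\dim\textsf{CVS}[T_1]$ degrees of freedom lost when $T_1$ is converted from T $l$-edges to cross-cuts. I expect this to be handled by a direct application of Theorem \ref{thm3.1} (which gives closed-form dimensions of tensor-product CVS) together with a careful inventory of how extending each $l_i\in T_1$ to the boundary increases the cross-cut count by one and the vertex count by a fixed amount determined by $\mathscr T_0$'s structure. Once this combinatorial identity is established, Lemma \ref{lem2.1} or direct substitution closes the argument.
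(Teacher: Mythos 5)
Your overall architecture (upper bound from the subadditivity of Lemma~\ref{lem3.2}, lower bound as the hard direction) matches the paper, and the upper bound half is correct as written. The lower bound, however, contains a genuine gap: the ``identity'' you defer to the end,
$$\dim S_d(\tilde{\mathscr T})-\dim S_d(\mathscr T)=\dim\bar S_d(\bar{\tilde{\mathscr T}}_0)-\dim\bar S_d(\bar{\mathscr T}_0)-\dim\textsf{CVS}[T_1],$$
is not a combinatorial bookkeeping step --- it is equivalent to the theorem itself. Indeed, by your own reduction via Proposition~\ref{prop2.1} one has $\dim S_d(\mathscr T)=\dim\bar S_d(\bar{\mathscr T}_0)+\dim\textsf{CVS}[N_0]$, and your equation (a) gives $\dim S_d(\tilde{\mathscr T})=\dim\bar S_d(\bar{\tilde{\mathscr T}}_0)+\dim\textsf{CVS}[N_1]$; substituting both into the displayed identity, the tensor-product terms cancel and what remains is exactly $\dim\textsf{CVS}[N_0]=\dim\textsf{CVS}[T_1]+\dim\textsf{CVS}[N_1]$. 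So no ``careful inventory'' of vertices and cross-cuts via Theorem~\ref{thm3.1} can establish it independently; any proof of that identity is a proof of the theorem, and the argument is circular at precisely the point where the real content lives (why the conformality conditions of $T_1$ and $N_1$ do not interact destructively). Note also that Lemma~\ref{lem2.1} cannot be invoked here, since it requires the extended edges to remain T $l$-edges, whereas you extend $T_1$ all the way into cross-cuts.

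The paper escapes this by localizing rather than globalizing: it forms the submesh $\mathscr T'$ consisting of $N_0$ together with the edges of the level-$0$ cells that are subdivided. Inside $\mathscr T'$ the edges of $T_1$ \emph{are} the cross-cuts, so Lemma~\ref{lem3.3} (in its homogeneous form) gives $\dim\bar S_d(\mathscr T')=\dim\bar S_d(\mathscr T'_{TP})+\dim\textsf{CVS}[N_1]$, and a direct computation with Theorem~\ref{thm3.1} shows $\dim\bar S_d(\mathscr T'_{TP})=(d+1-p)(d+1-q)=\dim\textsf{CVS}[T_1]$. The lower bound then follows from the injection $\bar S_d(\mathscr T')\hookrightarrow\textsf{CVS}[N_0]$ (every homogeneous spline on $\mathscr T'$ is determined by cofactors satisfying the conformality conditions of $N_0$), giving $\dim\textsf{CVS}[T_1]+\dim\textsf{CVS}[N_1]=\dim\bar S_d(\mathscr T')\le\dim\textsf{CVS}[N_0]$. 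If you want to repair your write-up, replacing your outward extension of $T_1$ by this restriction to the subdivided region is the missing idea.
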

\begin{proof}
If all the T-$l$ edges in level $1$ of $L(\mathscr{T})$ are in a reasonable order, by Proposition~\ref{prop2.2}, the conclusion holds.
Otherwise, consider the mesh $\mathscr{T}'$ generated by $N_0$ and the edges of the cells in level 0 subdivided by $T_1$ (An illustration is shown in Figure~\ref{fig9}).

\begin{figure}
    \centering
    \definecolor{ududff}{rgb}{0.30196078431372547,0.30196078431372547,1.}
\definecolor{ffqqqq}{rgb}{1.,0.,0.}
\begin{tikzpicture}[line cap=round,line join=round,>=triangle 45,x=1.0cm,y=1.0cm,scale=1.2]
\draw [line width=1.pt] (1.,1.)-- (5.,1.);
\draw [line width=1.pt] (1.,1.)-- (1.,5.);
\draw [line width=1.pt] (1.,5.)-- (5.,5.);
\draw [line width=1.pt] (5.,5.)-- (5.,1.);
\draw [line width=1.pt] (1.,4.)-- (5.,4.);
\draw [line width=1.pt] (1.,3.)-- (5.,3.);
\draw [line width=1.pt] (1.,2.)-- (5.,2.);
\draw [line width=1.pt] (2.,1.)-- (2.,5.);
\draw [line width=1.pt] (3.,5.)-- (3.,1.);
\draw [line width=1.pt] (4.,1.)-- (4.,5.);
\draw [line width=1.pt,color=ffqqqq] (2.,3.5)-- (4.,3.5);
\draw [line width=1.pt,color=ffqqqq] (2.,2.5)-- (4.,2.5);
\draw [line width=1.pt,color=ffqqqq] (2.5,2.)-- (2.5,4.);
\draw [line width=1.pt,color=ffqqqq] (3.5,4.)-- (3.5,2.);
\draw [line width=1.pt,color=ududff] (2.5,3.25)-- (3.5,3.25);
\draw [line width=1.pt,color=ududff] (3.5,2.75)-- (2.5,2.75);
\draw [line width=1.pt,color=ududff] (3.25,3.5)-- (3.25,2.5);
\draw [line width=1.pt,color=ududff] (2.75,3.5)-- (2.75,2.5);
\draw [line width=1.pt] (7.,4.)-- (9.,4.);
\draw [line width=1.pt] (7.,4.)-- (7.,2.);
\draw [line width=1.pt] (7.,2.)-- (9.,2.);
\draw [line width=1.pt] (9.,2.)-- (9.,4.);
\draw [line width=1.pt,color=ffqqqq] (7.,3.5)-- (9.,3.5);
\draw [line width=1.pt,color=ffqqqq] (7.,2.5)-- (9.,2.5);
\draw [line width=1.pt,color=ffqqqq] (7.5,4.)-- (7.5,2.);
\draw [line width=1.pt,color=ffqqqq] (8.5,2.)-- (8.5,4.);
\draw [line width=1.pt,color=ududff] (7.5,3.25)-- (8.5,3.25);
\draw [line width=1.pt,color=ududff] (7.5,2.75)-- (8.5,2.75);
\draw [line width=1.pt,color=ududff] (7.75,2.5)-- (7.75,3.5);
\draw [line width=1.pt,color=ududff] (8.25,3.5)-- (8.25,2.5);
\draw [line width=1.pt] (7.,3.)-- (9.,3.);
\draw [line width=1.pt] (8.,2.)-- (8.,4.);
\draw (2.7,0.65) node[anchor=north west] {$\mathscr{T}$};
\draw (7.7,0.65) node[anchor=north west] {$\mathscr{T}'$};
\end{tikzpicture}
    \caption{\label{fig9}A T-mesh $\mathscr{T}$ and corresponding $\mathscr{T}'$ in Theorem~\ref{thm3.2}.}
\end{figure}

Since $T_1$ is a tensor product T-connected component, by Lemma~\ref{lem3.3}, we have

\begin{equation}\label{eq3.5}
\dim\bar{S}(\mathscr{T}')=\dim\bar{S}(\mathscr{T}'_{TP})+\dim\textsf{CVS}[N_1],
\end{equation}
where $\mathscr{T}'_{TP}$ is the tensor-product mesh generated by $T_1$ and the boundary edges of $\mathscr{T}$.

Suppose that each horizontal $l$-edge has $p$ vertices and each vertical $l$-edge has $q$ vertices in $T_1$. Since there are $p$ horizontal $l$-edges and $q$ vertical $l$-edges in $\mathscr{T}'_{TP}$, we can easily calculate that
$$\dim\bar{S}(\mathscr{T}'_{TP})=[p-(d+1)]\times[q-(d+1)]=(d+1-p)(d+1-q).$$

Moreover, as $T_1$ is a non-diagonalizable tensor product T-connected component, by Theorem~\ref{thm3.1},
$$\dim\textsf{CVS}[T_1]=(d+1-p)(d+1-q).$$
Combining this with (\ref{eq3.5}), we have
\begin{equation}\label{eq 3.6}
\dim\bar{S}_{TP}(\mathscr{T}')=\dim\textsf{CVS}[T_1].
\end{equation}

Since
$$\dim\bar{S}(\mathscr{T}')\le\dim\textsf{CVS}[N_0],$$
combining this with (\ref{eq3.5}) and (\ref{eq 3.6}), we get
$$\dim\textsf{CVS}[T_1]+\dim\textsf{CVS}[N_1]\le \dim\textsf{CVS}[N_0].$$

On the other hand, since $\{N_1, T_1\}$ is a bipartite partition of $N_0$, by Lemma~\ref{lem3.2}
$$\dim\textsf{CVS}[N_0]\le\dim\textsf{CVS}[T_1]+\dim\textsf{CVS}[N_1].$$
    Thus 
    $$\dim\textsf{CVS}[N_0]=\dim\textsf{CVS}[T_1]+\dim\textsf{CVS}[N_1]$$
    follows immediately. $\Box$
\end{proof}

\section{Dimension of the spline space $S_d(\mathscr{T})$ over a hierarchical T-mesh}
Theorem~\ref{thm3.2} presents a general conclusion. As an application, we use the result to derive a dimension formula of bi-degree $(d,d)$ spline spaces with the highest order of smoothness over a certain type of hierarchical T-mesh. 
This result can be regarded as an extension of the result in~\cite{zeng2015}.
It is worth noting that a similar outcome can be obtained through the method in~\cite{wu2012}.  However, the proof here is more concise and the result is more comprehensive.

\begin{theorem}~\label{thm4.1}
 Given a tensor product mesh $\mathscr{T}_0$ that contains at least $(d+1)$ horizontal $l$-edges and $(d+1)$ vertical $l$-edges, let $\mathscr{T}$ be the T-mesh created by a collection of $(d-1)\times(d-1)$ tensor product subdivisions, where the subdivision regions may partially overlap. Then, the dimension of the conformality vector space of $L(\mathscr{T})$ can be determined as

$$\dim\textsf{CVS}[L(\mathscr{T})] = v-(d+1)t + \gamma,$$

where $v$ represents the number of vertices on $L(\mathscr{T})$, $t$ is the number of T $l$-edges in $L(\mathscr{T})$, and $\gamma$ is the number of isolated $(d-1)\times(d-1)$ tensor product subdivisions.
\end{theorem}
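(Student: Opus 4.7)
The plan is to combine three building blocks already established in the paper: the closed form of Theorem~\ref{thm3.1} for the conformality vector space of a single tensor product T-connected component, the level-by-level splitting of Theorem~\ref{thm3.2} together with Corollary~\ref{cor3.3}, and the bipartite-partition machinery of Proposition~\ref{prop2.2}, Lemma~\ref{lem2.1} and Lemma~\ref{lem3.2}. The organising idea is to reorganise the given subdivisions into maximal overlap-connected clusters, reduce each cluster to a single tensor product T-connected component, apply Theorem~\ref{thm3.1} cluster by cluster, and then reassemble.

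First I would partition the collection of $(d-1)\times(d-1)$ subdivisions into maximal clusters $C_1,\dots,C_s$ under the ``overlaps-with'' relation. Because subdivisions from distinct clusters have disjoint regions, the corresponding pieces $L_{C_i}$ of $L(\mathscr T)$ are pairwise disjoint and the conformality matrix is block-diagonal across them, so the quantities in the theorem decompose additively as $v=\sum v_{C_i}$, $t=\sum t_{C_i}$, $\gamma=\#\{i:|C_i|=1\}$, and $\dim\textsf{CVS}[L(\mathscr T)]=\sum_i \dim\textsf{CVS}[L_{C_i}]$. It therefore suffices to prove the formula inside each cluster.

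Next I would treat the two kinds of clusters separately. A singleton cluster is exactly an isolated $(d-1)\times(d-1)$ tensor product subdivision with $m=n=d-1$, $p=q=2d-1$ and mono-vertex counts $\tilde p=\tilde q=d<d+1$; Theorem~\ref{thm3.1} places it in the non-diagonalisable branch with $\dim\textsf{CVS}[L_{C_i}]=(d-2)^2$, and a direct vertex--edge count gives $v_{C_i}-(d+1)t_{C_i}=(d-1)(d-3)=(d-2)^2-1$, so the $+1$ contributed by $\gamma$ repairs the gap. For a non-singleton cluster I would argue that, because the constituent subdivisions overlap, the T $l$-edges of $L_{C_i}$ merge along the overlap directions and pick up additional mono-vertices from the cross-cuts and rays of $\mathscr T_0$ that the elongated edges now meet; one must verify that this boost pushes $\tilde p$ or $\tilde q$ in $L_{C_i}$ to at least $d+1$, placing $L_{C_i}$ in the diagonalisable branch of Theorem~\ref{thm3.1}. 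Proposition~\ref{prop2.2} together with the full-rank statement of Li et al.\ cited just after it then yields $\dim\textsf{CVS}[L_{C_i}]=v_{C_i}-(d+1)t_{C_i}$ with no further correction, and summing over $i$ gives the claimed identity.

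The main obstacle I anticipate lies in verifying diagonalisability uniformly across every non-singleton cluster. The overlap geometry of two or more $(d-1)\times(d-1)$ blocks can be subtle (overlap along a row, a column, a corner, or in a staircase pattern spanning several subdivisions), and in each configuration one has to show that the merged T $l$-edges meet at least $d+1$ level-$0$ lines of $\mathscr T_0$. Should a direct case analysis become unwieldy, I would fall back on an induction on the number of subdivisions in the cluster, using Lemma~\ref{lem2.1} to extend merged T $l$-edges harmlessly out to the bounding box of the cluster and then invoking Theorem~\ref{thm3.2} to peel off one subdivision at a time, reducing to a strictly smaller configuration governed by the inductive hypothesis. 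The bookkeeping that $v$, $t$ and $\gamma$ split cluster-wise is straightforward once the cluster decomposition is in place, so I do not expect trouble there.
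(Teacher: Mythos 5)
Your overall architecture --- each isolated block contributes the non-diagonalizable tensor product count $(d-2)^2=v'-(d+1)t'+1$, while everything else should satisfy $\dim\textsf{CVS}=v-(d+1)t$ because it admits a reasonable order --- matches the paper's, and your singleton computation is exactly the paper's Case~1. But there are two genuine gaps. First, clustering by region overlap does not give the block decomposition you claim: two $(d-1)\times(d-1)$ blocks occupying disjoint but adjacent, aligned cell ranges have their new subdivision lines concatenate into single longer T $l$-edges, so the pieces $L_{C_i}$ are not disjoint; worse, each such block is a singleton cluster in your sense yet is not ``isolated'' in the sense of the theorem (the merged edges have $N(l)=2(d-1)>d-1$), so your $\gamma$ would overcount. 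The decomposition must be taken along the connectivity of $L(\mathscr{T})$ itself, which is in effect what the paper's bipartite partition by $N(l)=d-1$ versus $N(l)>d-1$ accomplishes.

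Second, and more seriously, the diagonalizability of a non-singleton component is exactly the step you defer, and the routes you propose do not close it. Theorem~\ref{thm3.1} cannot be applied to a non-singleton cluster, because a union of overlapping blocks is not a tensor product T-connected component: its T $l$-edges have different lengths and vertex counts, so there is no single $\tilde p$ or $\tilde q$ to push past $d+1$. Nor does it suffice to check that the \emph{merged} edges meet at least $d+1$ level-$0$ lines: in a corner overlap of two blocks only one edge per direction is elongated, and the remaining $2(d-2)$ edges of each block still meet only $d$ level-$0$ lines, i.e.\ carry only $d$ mono-vertices. The paper's actual argument is an explicit three-tier reasonable order: first the short edges ($N(l)=d-1$) meeting no long edge, which are shown to be pairwise non-intersecting (two intersecting ones would force an isolated block, contradicting the hypothesis) and hence each contributes all $2d-1\ge d+1$ of its vertices; then the short edges that do meet a long edge, each having $d$ mono-vertices on level-$0$ lines plus at least one intersection with a not-yet-listed long edge; and finally the long edges, each with at least $d+1$ level-$0$ crossings. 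This ordering is the heart of the proof and is absent from your plan; the fallback induction via Theorem~\ref{thm3.2} does not substitute for it, since that theorem separates consecutive levels of a hierarchical mesh rather than peeling one block off a same-level overlapping family.
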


\begin{proof}
Let $\{L_1, L_2\}$ be a bipartite partition of $L(\mathscr{T})$, where $N(l)=d-1$ for any $l\in L_1$ and $N(l)>d-1$ for any $l\in L_2$, where $N(l)$ represents the number of cells crossed by the edge $l$. There are three cases to consider:
    \begin{enumerate}
        \item[1.] $L_2=\varnothing$.

        In this case, all the $(d-1)\times(d-1)$ submeshes in the subdivision are separated from each other, as $N(l)=d-1$ for each $l\in L(\mathscr{T})$. Thus the problem reduces to consider each isolated connected component separately in this case. For each isolated connected component $T'$, it is a non-diagonalizable tensor product T-connected component. Thus, according to Theorem~\ref{thm3.1}, the dimension of the conformality vector space can be calculated as
$$\dim\textsf{CVS}[T']=(d-2)^2.$$
Notice that the number of vertices in $T'$ is $v'= (d-1)(3d-1)$ and the number of T $l$-edges is $t'=2(d-1)$. Using this information, the dimension formula can be expressed as
$$\dim\textsf{CVS}[T']=v'-(d+1)t'+1.$$
Then, for all $\gamma$ isolated connected components, the dimension formula is
$$\dim\textsf{CVS}[L(\mathscr T)]=v-(d+1)t+\gamma,$$
where $v=\gamma v'$ is the number of vertices on $L(\mathscr{T})$ and $t=\gamma t'$ is the number of T $l$-edges of $L(\mathscr{T})$. Thus the dimension formula is calculated as required.

        \item[2.] $L_2\neq\varnothing$ and there are no isolated $(d-1)\times(d-1)$ subdivisions.

We will now prove that all the T $l$-edges in $L(\mathscr{T})$ have a reasonable order. Let $L_1 = {l_1, l_2, \ldots, l_m}$ and $L_2 = {l_{m+1}, \ldots, l_{m+n}}$. Without loss of generality, assume that $l_1, \ldots, l_{\tilde m}$ are the edges in $L_1$ that do not intersect with edges in $L_2$, where $1 \le \tilde m \le m$. We claim that $l_1 \succ l_2 \succ \ldots \succ l_{\tilde m} \succ \ldots \succ l_{m} \succ \ldots \succ l_{m+n}$ is a reasonable order of $L(\mathscr{T})$.

First, we will prove that $l_i$ does not intersect with $l_j$ for $1 \le i,j \le \tilde m$ and $i \neq j$. In fact, if $l_i$ intersects with $l_j$, then one of them must be a horizontal $l$-edge and the other a vertical $l$-edge. Consider the $(d-1) \times (d-1)$ submesh $\mathscr{T}'$ that contains both $l_i$ and $l_j$. Since $N(l_i) = N(l_j) = d-1$ and $l_i$ and $l_j$ have no intersection with the $l$-edges in $L_2$, all of the $(d-1)^2$ T $l$-edges in $\mathscr{T}'$ do not intersect with the $l$-edges in $L_2$. Therefore, $\mathscr{T}'$ is an isolated submesh, which contradicts the hypothesis. Hence, $m(l_k) = n(l_k) > d+1$ for $k=1,2,\ldots,\tilde m$, where $n(l_k)$ is the number of vertices on $l_k$.

Second, consider the edges $\tilde{l}_{\tilde m+1}, \ldots, \tilde{l}_m$. These edges are obtained from $l_r$ by removing the intersections with $l_1, \ldots, l_{\tilde m}$, but retaining the intersections of the $l$-edges in $L_2$ with $\tilde{l}_r$. Since $N(\tilde{l}_r) = d-1$, it follows that $m(\tilde{l}_r) \ge d+1$.

Finally, for $\tilde{l}_p\in L_2$, $p=m+1,\ldots,m+n$, since $N(\tilde{l}_p)>d-1$, then $m(\tilde{l}_p)\ge d+1$.

In short, let $\tilde{l}_k$ be the same as $l_k$ for $k=1,\ldots,\tilde m$. Then we have $m(\tilde{l}_q)\ge d+1$  for $q=1,\ldots,m+n$. Since $r(l_q)\ge m(\tilde{l}_q)\ge d+1$, thus $l_1\succ l_2\succ\ldots\succ l_{\tilde m}\succ\ldots\succ l_{m}\succ\ldots\succ l_{m+n}$ is a reasonable order of $L(\mathscr{T})$. 
Therefore, the dimension formula in this case is
$$\dim\textsf{CVS}[L(\mathscr T)]=v-(d+1)t.$$
Since $\gamma=0$ in this case, the dimension formula is still correct.

        \item[3,] $L_2\neq\varnothing$ and there exist isolated $(d-1)\times(d-1)$ subdivisions.

        The general case is just the combination of the above two cases, and thus the assertion follows easily. 
$\Box$
    \end{enumerate}

\end{proof}

%Notice that Theorem~\ref{thm3.2} and Theorem~\ref{thm4.1} can be generalized for spline spaces with homogeneous boundary conditions. This is because for spline spaces with homogeneous boundary conditions, all the $l$-edges meet the global conformality conditions. Hierarchical T-meshes typically pose difficulties when it comes to handling boundary subdivided cells. \textcolor{blue}{Consequently, the spline space with homogeneous boundary conditions is often favored as a intermediary in determining the dimension of the spline space on the T-meshes. Building upon this idea, we will now use it to derive the dimension formula of spline spaces over hierarchical T-meshes.}

%Next, we will use the the spline space with homogeneous conditions $\bar{S}_d(\mathscr T)$ as a tool to determine $\dim S_d(\mathscr T)$, 

Notice that Theorem~\ref{thm3.2} and Theorem~\ref{thm4.1} can be generalized to the case of the spline space with homogeneous conditions. Since for spline spaces with homogeneous boundary conditions, all l-edges of $\mathscr T$ meet the global conformality conditions. As a result, the biggest difference between $S_d(\mathscr T)$ with $\bar{S}_d(\mathscr T)$ in Theorem~\ref{thm3.2} and Theorem~\ref{thm4.1} is that
$\bar{S}_d(\mathscr T)$ allows some boundary cells to be subdivided but  $S_d(\mathscr T)$ cannot.

%The result in Theorem~\ref{thm4.1} can be generalized to spline spaces over hierarchical T-meshes under tensor product subdivision.

\begin{theorem}\label{thm4.2}
Given a hierarchical T-mesh $\mathscr{T}$  created by subdividing a collection of $(d-1)\times(d-1)$ tensor product submeshes at each level, where the subdivision regions may partially overlap. Then \begin{equation}
\label{dimht}
\dim S_d(\mathscr{T}) = v + db + (d-1)^2 + \gamma_0 - t(d+1),
\end{equation}
where $v$ is the number of vertices in $\mathscr{T}$, $t$ is the number of edges in $\mathscr{T}$, $b$ is the number of boundary edges, and $\gamma_0$ is the number of isolated connected components that subdivide $(d-1)\times(d-1)$ interior cells in all level.
\end{theorem}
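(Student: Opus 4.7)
The plan is to combine three machines built up in the paper: the smoothing-cofactor reduction (Proposition~\ref{prop2.1}), the level-by-level CVS recursion for tensor-product subdivisions (Theorem~\ref{thm3.2}), and the per-level CVS formula for collections of $(d-1)\times(d-1)$ subdivisions (Theorem~\ref{thm4.1}). First I would invoke Proposition~\ref{prop2.1} to write
$$\dim S_d(\mathscr T) = (d+1)^2 + c(d+1) + n_v + \dim\textsf{CVS}[L(\mathscr T)],$$
so the problem reduces to computing $\dim\textsf{CVS}[L(\mathscr T)]$ together with a careful tally of cross-cuts $c$ and off-T mono-vertices $n_v$.

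Next I would prove by induction on $\mathrm{lev}(\mathscr T)$ that
$$\dim\textsf{CVS}[L(\mathscr T)] = \sum_{i=1}^{\mathrm{lev}(\mathscr T)} \dim\textsf{CVS}[T_i].$$
Since each level consists of (possibly overlapping) $(d-1)\times(d-1)$ tensor-product subdivisions, Theorem~\ref{thm3.2} applies in spirit but needs extension: when the subdivisions at level~$i$ split into disjoint tensor-product T-connected components, Theorem~\ref{thm3.2} is applied component by component; when they overlap, I would mimic the construction in the proof of Theorem~\ref{thm3.2}, enlarge $T_i$ to an auxiliary configuration $T_i'$ whose CVS dimension is now supplied by Theorem~\ref{thm4.1} (rather than Theorem~\ref{thm3.1}), and then use Lemma~\ref{lem2.1} to push the equality back to $\mathscr T$ itself. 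Lemma~\ref{lem3.2} provides the ``$\le$'' direction automatically from the bipartite partition $\{T_i,N_i\}$, so the work is in proving the reverse inequality via the homogeneous-boundary route of Lemma~\ref{lem3.3}.

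Plugging Theorem~\ref{thm4.1} into each summand gives
$$\dim\textsf{CVS}[L(\mathscr T)] = \sum_i \bigl(v_{T_i}-(d+1)t_{T_i}+\gamma_i\bigr) = V_T - (d+1)T_T + \gamma_0,$$
where $V_T,T_T$ are totals over all levels and $\gamma_0=\sum_i\gamma_i$ matches the statement. The final step is purely combinatorial: substitute this into the Proposition~\ref{prop2.1} expression and regroup. Using that every interior edge of $\mathscr T$ lies on exactly one $l$-edge (hence contributes either to a cross-cut or to a T~$l$-edge), together with Euler's formula $v-t+f=2$ for the planar subdivision to relate $n_v,c,V_T,T_T$ to the global counts $v,t,b$, one should obtain the claimed $v+db+(d-1)^2+\gamma_0-t(d+1)$; the constant $(d-1)^2$ will surface as the residue of $(d+1)^2$ after the boundary $l$-edges are absorbed into the $db$ term via the $d$-fold boundary copies implicit in Theorem~\ref{thm2.1}.

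The main obstacle I anticipate is the extension of Theorem~\ref{thm3.2} to overlapping $(d-1)\times(d-1)$ subdivisions within a single level: that theorem's proof relies on $T_1$ being a \emph{single} tensor-product T-connected component so that Theorem~\ref{thm3.1} directly yields the key identity $\dim \bar S_d(\mathscr T'_{TP}) = \dim\textsf{CVS}[T_1]$. Replacing Theorem~\ref{thm3.1} with Theorem~\ref{thm4.1} in that argument requires verifying that the auxiliary identity $\dim\bar S_d(\mathscr T')=\dim\bar S_d(\mathscr T'_{TP})+\dim\textsf{CVS}[N_1]$ still holds when $\mathscr T'_{TP}$ is a patchwork rather than a single tensor-product mesh, and it is here that the $\gamma_i$ contributions enter naturally. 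A secondary, but largely mechanical, obstacle is the final Euler-style accounting that converts the $V_T, T_T, c, n_v$ data into the compact $v, t, b$ form of the theorem.
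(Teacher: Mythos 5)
Your first half matches the paper's: both arguments reduce to a level-by-level recursion for $\dim\textsf{CVS}$, with Theorem~\ref{thm4.1} (and Corollary~\ref{cor3.3} for the isolated components) supplying each summand $v_i-(d+1)t_i+\gamma_i$. The divergence, and the genuine gap, is in how you convert this into the closed form of the theorem. You propose to apply Proposition~\ref{prop2.1} directly to $\mathscr T$ and then do an Euler-style tally of $c$, $n_v$, $V_T$, $T_T$. The paper instead passes to the extended T-mesh $\bar{\mathscr T}$ via Theorem~\ref{thm2.1} and works entirely with $\bar S_d$, and this is not a cosmetic choice: the whole point of the detour is the boundary. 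First, if an isolated $(d-1)\times(d-1)$ subdivision touches boundary cells of $\mathscr T$, then in $\mathscr T$ itself some of its $l$-edges are rays rather than T $l$-edges, so that component is not a tensor product T-connected component of $L(\mathscr T)$ and Theorem~\ref{thm4.1} does not apply to it as stated; your identification $\gamma_0=\sum_i\gamma_i$ silently counts \emph{all} isolated subdivisions, whereas the theorem's $\gamma_0$ counts only those subdividing \emph{interior} cells. The paper resolves exactly this by splitting $L_1=\{L_{11},L_{12}\}$ and showing that after extension the $l$-edges of $L_{12}$ acquire $d$ new vertices, become reasonably ordered, and drop out of the count, leaving $\bar\gamma=\gamma_0$. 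Nothing in your proposal produces this correction.

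Second, your final accounting is internally inconsistent: you say the $db$ term arises ``via the $d$-fold boundary copies implicit in Theorem~\ref{thm2.1},'' but Theorem~\ref{thm2.1} never enters your argument, since you start from Proposition~\ref{prop2.1} on the unextended mesh. In the paper the $db$ and $(d-1)^2$ terms fall out of the elementary counts $\bar v=v+4d^2+db$ and $\bar t=t+4d$ followed by $(d+1)^2+4d^2-4d(d+1)=(d-1)^2$; no Euler formula is needed or used. Your anticipated ``main obstacle'' (extending Theorem~\ref{thm3.2} to overlapping subdivisions within a level) is in fact not where the difficulty lies: the paper disposes of it cheaply because a non-isolated collection of overlapping $(d-1)\times(d-1)$ subdivisions has a reasonable order by the argument in Theorem~\ref{thm4.1}, so Proposition~\ref{prop2.2} applies directly, while the isolated case is Corollary~\ref{cor3.3}. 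To repair your proof you would need either to adopt the extended-mesh route, or to prove a boundary-aware version of Theorem~\ref{thm4.1} for components containing rays and then carry out the $c$, $n_v$ bookkeeping explicitly; as written, the proposal does not yield the correct $\gamma_0$ or justify the constant terms.
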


\begin{proof}
    Consider the spline space $\bar{S}_d(\mathscr{T})$.  In each level $i$, $i = 1, 2, \ldots, lev(\mathscr{T})$, if a connected component is an isolated $(d-1)\times(d-1)$ tensor product component, the dimension of the conformality vector space can be calculated recursively using Corollary~\ref{cor3.3}. Additionally, if a connected component is not an isolated $(d-1)\times(d-1)$ tensor product subdivision, the dimension of the conformality vector space can still be calculated in a recursive manner, as stated in Theorem~\ref{thm4.1}. This is because if a connected component is not isolated, then it has a reasonable order, as indicated by equation~(\ref{rea}).
    Thus, we have $$\dim\bar{S}_d(\mathscr{T}) = \dim\bar{S}_d(\mathscr{T}_0) + \dim\textsf{CVS}[L],$$
where $\mathscr{T}_0$ is the tensor-product mesh in level $0$, and $L$ is the set of all the $l$-edges in $\mathscr{T}$.

Since $\dim\textsf{CVS}[L]$ can be calculated recursively, we have
$$\dim\textsf{CVS}[L]=\sum\limits_{i=1}^{lev(\mathscr{T})}\dim\textsf{CVS}[L_i]=\sum\limits_{i=1}^{lev(\mathscr{T})}(v_i-(d+1)t_i+\gamma_i),$$
where $L_i$ is the set of all the $l$-edges in level $i$,  $v_i$ is the number of vertices on $l$-edges in level $i$, $t_i$ is the number of $l$-edges in level $i$, and $\gamma_i$ is the number of connected components that are isolated $(d-1)\times(d-1)$ tensor product subdivisions in level $i-1$,  $i = 1, 2, \ldots, lev(\mathscr{T})$.

    It's straightforward to verify that $\dim\bar{S}_d(\mathscr{T}_0) = v_0-(d+1)t_0+(d+1)^2$. Therefore,
$$\dim\bar{S}_d(\mathscr{T})=v_0-(d+1)t_0+(d+1)^2+\sum\limits_{i=1}^{lev(\mathscr{T})}(v_i-(d+1)t_i+\gamma_i).$$
    
    Let $v=\sum\limits_{i=0}^{lev(\mathscr{T})}v_i$ be the number of all the vertices in $\mathscr{T}$, $t=\sum\limits_{i=0}^{lev(\mathscr{T})}t_i$ be the number of all the $l$-edges in $\mathscr{T}$, $\gamma=\sum\limits_{i=0}^{lev(\mathscr{T})}\gamma_i$. Then
    \begin{equation}\label{dimf}
   \dim\bar{S}_d(\mathscr{T})=v-(d+1)t+(d+1)^2+\gamma.
    \end{equation}

  Next we will show that $\dim\bar{S}_d(\bar{\mathscr{T}})$ also satisfies the equation \eqref{dimf}.
  Notice that there a structural difference between the hierarchical T-mesh $\mathscr T$ and its extended T-mesh $\bar{\mathscr T}$. To concisely highlight this difference, we will discuss it at each level of the two hierarchical T-meshes $\mathscr T$ and $\bar{\mathscr T}$ . It should be emphasized that, given our consideration of the spline space with homogeneous boundary conditions, some boundary cells in both $\mathscr T$ and $\bar{\mathscr T}$ are permitted to be subdivided.

    In a particular level $i_0$, consider the bipartite partition $\{L_1,L_2\}$ of the T-connected component of $\mathscr T$ in level $i_0$, where $L_1$ is non-diagonalizable and $L_2$ is diagonalizable. Similarly, $\{\bar{L}_1,\bar{L}_2\}$ is a bipartite partition of the T-connected component of $\bar{\mathscr T}$ in level $i_0$, with $\bar{L}_1$ being non-diagonalizable and $\bar{L}_2$ being diagonalizable. $L_1$ consists of all isolated $(d-1)\times(d-1)$ tensor product subdivisions that result from subdividing some cells of $\mathscr T$ in level $i_0-1$. To further divide $L_1$, let $L_1=\{L_{11},L_{12}\}$, where $L_{11}$ is the set of all isolated $(d-1)\times(d-1)$ tensor product subdivisions that only subdivide interior cells of $\mathscr T$ in level $i_0-1$, and $L_{12}$ is the set of all isolated $(d-1)\times(d-1)$ tensor product subdivisions that subdivide some boundary cells of $\mathscr T$ in level $i_0-1$. We will demonstrate that $\bar{L}_1=L_{11}$.

    Since the isolated $(d-1)\times(d-1)$ tensor product subdivisions that only subdivide interior cells of $\mathscr T$ in level $i_0-1$ do not require any extension in the extended T-mesh $\bar{\mathscr T}$, they remain non-diagonalizable, i.e. $L_{11} \subset \bar{L}_1$.
  
    Next, we will demonstrate that extending the $l$-edges in $L_{12}$ to the boundary of $\bar{\mathscr T}$ will result in the extended T-mesh $\bar{\mathscr T}$ no longer being considered a hierarchical T-mesh under cross subdivision. To illustrate this, consider the T-mesh shown in the left of Figure~\ref{figg9}. Let us denote it as $\mathscr T$. The spline space $S_2(\mathscr T)$ is considered. The red-marked region in $\mathscr T$ represents a boundary cell in level 0 that is subdivided by cross subdivision in level 1. Upon extending the $l$-edges, as shown on the right, the red region cannot be considered as cells that are subdivided through cross subdivision.  
    
    Fortunately, the extension of $l$-edges in $L_{12}$ to $\bar{\mathscr T}$ have a reasonable order. This is because when the $l$-edges of $L_{12}$ are extended to the boundary of $\bar{\mathscr T}$, $d$ new vertices are added to these extended $l$-edges. This ensures that the extended $l$-edges have a reasonable order, and they no longer remain non-diagonalizable. As a result, it follows that $\bar{L}_1=L_{11}$.

    Actually, Equation~\eqref{dimf} holds when the set of all $l$-edges in $\mathscr T$ is divided into a bipartite partition $\{L_1, L_2\}$, where $L_1$ is a set of isolated $(d-1)\times(d-1)$ tensor product subdivisions, and $L_2$ is a set of $l$-edges that have a reasonable order. Since all the $l$-edges in $\bar{\mathscr T}$ can also be divided into such a bipartite partition $\{\bar{L}_1, \bar{L}_2\}$, the dimension formula Equation~\eqref{dimf} also holds for the spline space with homogeneous boundary conditions over $\bar{\mathscr T}$. Let $\bar{\mathscr T}$ have $\bar{v}$ vertices, $\bar{t}$ $l$-edges, and $\bar{\gamma}=\gamma_0$ isolated $(d-1)\times(d-1)$ tensor product subdivisions that subdivide only interior cells. By Equation~\eqref{dimf}, we have
    \begin{equation}
    \dim\bar{S}_d(\bar{\mathscr{T}})=\bar{v}-(d+1)\bar{t}+(d+1)^2+{\gamma}_0.
    \end{equation}

    On the other hand, the number of vertices and edges in $\bar{\mathscr{T}}$ can be expressed as $\bar{v}=v+4d^2+db$ and $\bar{t}=t+4d$, respectively, where $b$ is the number of boundary vertices of $\mathscr{T}$. Thus
    \begin{align*}
         \dim S_d(\mathscr{T}) &= \dim \bar{S}_d(\bar{\mathscr{T}})\\ &=\bar{v}-(d+1)\bar{t}+(d+1)^2+\bar{\gamma}\\
         &=(v+4d^2+db)-(d+1)(t+4d)+(d+1)^2+\bar{\gamma}\\
    &=v+db+(d-1)^2+\bar{\gamma}-t(d+1).
    \end{align*}

This completes the proof of the theorem.
    $\Box$

\end{proof}

\begin{figure}
    \centering
    \definecolor{zzttqq}{rgb}{0.6,0.2,0.}
\begin{tikzpicture}[line cap=round,line join=round,>=triangle 45,x=1.0cm,y=1.0cm,scale=1.3]
\fill[line width=1.pt,color=zzttqq,fill=zzttqq,fill opacity=0.10000000149011612] (1.,2.) -- (2.,2.) -- (2.,1.) -- (1.,1.) -- cycle;
\fill[line width=1.pt,color=zzttqq,fill=zzttqq,fill opacity=0.10000000149011612] (6.603860846631458,2.0101822119607826) -- (8.00241051747193,2.0049441982123164) -- (8.00237819971885,0.5973942736378075) -- (6.603860846631458,0.5973942736378075) -- cycle;
\draw [line width=1.pt] (1.,1.)-- (4.,1.);
\draw [line width=1.pt] (4.,1.)-- (4.,4.);
\draw [line width=1.pt] (4.,4.)-- (1.,4.);
\draw [line width=1.pt] (1.,4.)-- (1.,1.);
\draw [line width=1.pt] (1.,3.)-- (4.,3.);
\draw [line width=1.pt] (1.,2.)-- (4.,2.);
\draw [line width=1.pt] (2.,1.)-- (2.,4.);
\draw [line width=1.pt] (3.,4.)-- (3.,1.);
\draw [line width=1.pt] (1.,1.5)-- (2.,1.5);
\draw [line width=1.pt] (1.5,1.)-- (1.5,2.);
\draw [line width=1.pt] (1.5,1.75)-- (2.,1.75);
\draw [line width=1.pt] (1.75,1.5)-- (1.75,2.);
\draw [line width=1.pt] (6.6,4.4)-- (6.6,0.6);
\draw [line width=1.pt] (6.6,0.6)-- (10.4,0.6);
\draw [line width=1.pt] (10.4,0.6)-- (10.4,4.4);
\draw [line width=1.pt] (10.4,4.4)-- (6.6,4.4);
\draw [line width=1.pt] (6.6,4.2)-- (10.4,4.2);
\draw [line width=1.pt] (6.6,0.8)-- (10.4,0.8);
\draw [line width=1.pt] (10.2,4.4)-- (10.2,0.6);
\draw [line width=1.pt] (6.8,4.4)-- (6.8,0.6);
\draw [line width=1.pt] (7,4.4)-- (7,0.6);
\draw [line width=1.pt] (10,0.6)-- (10,4.4);
\draw [line width=1.pt] (6.6,4)-- (10.4,4);
\draw [line width=1.pt] (6.6,1)-- (10.4,1);
\draw [line width=1.pt] (8,4.4)-- (8,0.6);
\draw [line width=1.pt] (9,0.6)-- (9,4.4);
\draw [line width=1.pt] (6.6,3)-- (10.4,3);
\draw [line width=1.pt] (6.6,2)-- (10.4,2);
\draw [line width=1.pt] (7.5,2)-- (7.5,0.6);
\draw [line width=1.pt] (8,1.5)-- (6.6,1.5);
\draw [line width=1.pt] (7.5,1.75)-- (8,1.75);
\draw [line width=1.pt] (7.75,1.5)-- (7.75,2);
\draw [line width=1.pt,color=zzttqq] (1.,2.)-- (2.,2.);
\draw [line width=1.pt,color=zzttqq] (2.,2.)-- (2.,1.);
\draw [line width=1.pt,color=zzttqq] (2.,1.)-- (1.,1.);
\draw [line width=1.pt,color=zzttqq] (1.,1.)-- (1.,2.);
\draw [line width=2.pt,color=zzttqq] (6.603860846631458,2.0101822119607826)-- (8.00241051747193,2.0049441982123164);
\draw [line width=2.pt,color=zzttqq] (8.00241051747193,2.0049441982123164)-- (8.00237819971885,0.5973942736378075);
\draw [line width=2.pt,color=zzttqq] (8.00237819971885,0.5973942736378075)-- (6.603860846631458,0.5973942736378075);
\draw [line width=2.pt,color=zzttqq] (6.603860846631458,0.5973942736378075)-- (6.603860846631458,2.0101822119607826);
\draw (2.35,0.01230027897879689) node[anchor=north west] {$\mathscr T$};
\draw (8.35,0.01230027897879689) node[anchor=north west] {$\bar{\mathscr T}$};
\end{tikzpicture}
    \caption{\label{figg9} Structural difference between a T-mesh and its corresponding extended T-mesh}
\end{figure}
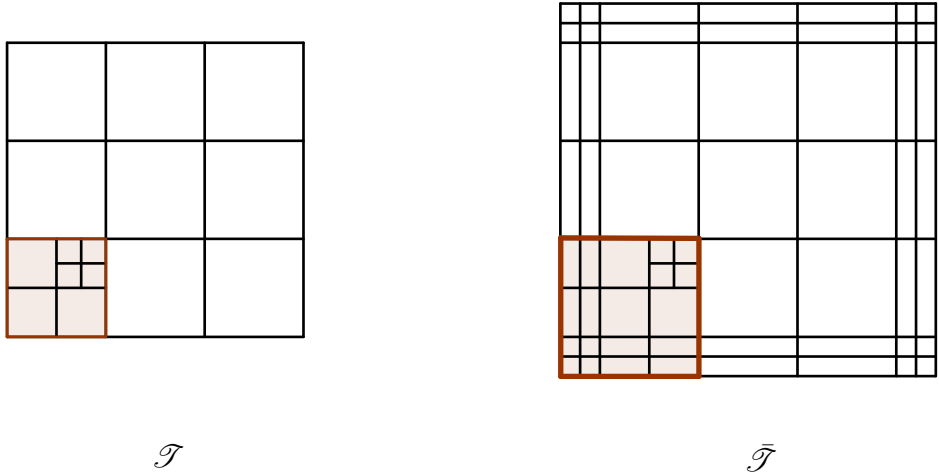

\bigskip

Notice that the dimension formula~(\ref{dimf}) is consistent with the dimension formula for {\it(m,n)-subdivision T-meshes} when $m=n=d$ as described in~\cite{wu2012}. However, the T-mesh in Theorem~\ref{thm4.2} allows for some partial overlapping subdivisions, making it a more general case than the one described in~\cite{wu2012}.

\bigskip

Since the dimension of the spline space defined in Theorem~\ref{thm4.2} is stable, we can build up a strategy to modify an arbitrary hierarchical T-mesh $\mathscr T$ into a new hierarchical T-mesh $\mathscr T'$ 
based on $(d-1)\times (d-1)$ tensor product subdivision 
such that the dimension of the spline space $S_d(\mathscr T')$ is stable. In the following, we will use an example to demonstrate the modification strategy.

\begin{example}
Consider the spline space $S_4(\mathscr{T})$ with $\mathscr{T}$ shown on the left of Figure~\ref{fig11}, where the dashed lines represent the vanished edges.
With some effort, one can show that the dimension of the spline space $S_4(\mathscr T)$ is unstable. In the following, we modify $\mathscr T$ into a new hierarchical T-mesh $\mathscr T'$ such that the spline space $S_4(\mathscr T')$ is stable.
\end{example}
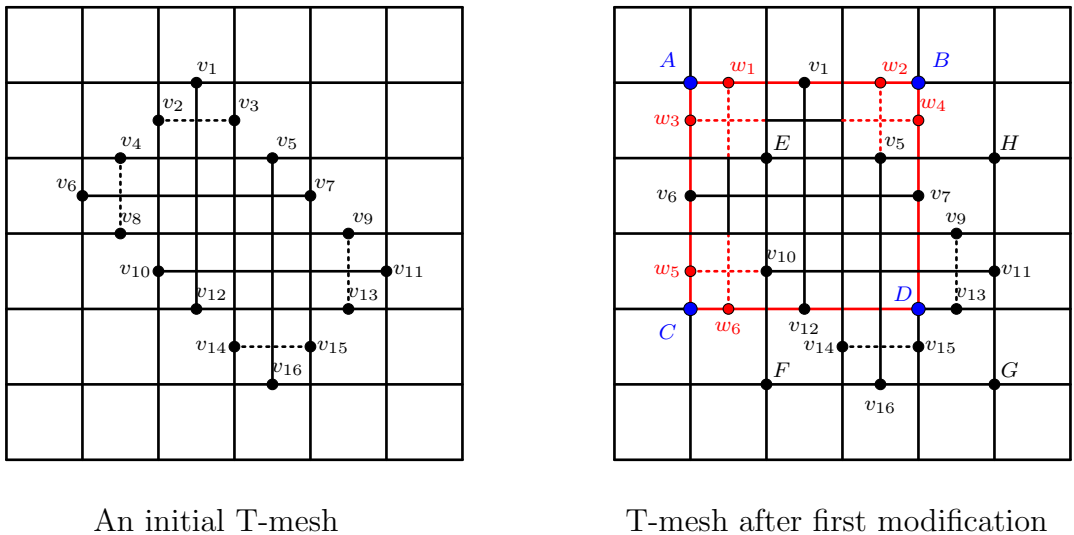
\begin{figure}
    \centering
 \definecolor{qqqqff}{rgb}{0.,0.,1.}
\definecolor{ffqqqq}{rgb}{1.,0.,0.}
\begin{tikzpicture}[line cap=round,line join=round,>=triangle 45,x=1.0cm,y=1.0cm]
\draw [line width=1.pt] (1.,1.)-- (7.,1.);
\draw [line width=1.pt] (1.,1.)-- (1.,7.);
\draw [line width=1.pt] (1.,7.)-- (7.,7.);
\draw [line width=1.pt] (7.,7.)-- (7.,1.);
\draw [line width=1.pt] (1.,6.)-- (7.,6.);
\draw [line width=1.pt] (1.,5.)-- (7.,5.);
\draw [line width=1.pt] (1.,4.)-- (7.,4.);
\draw [line width=1.pt] (1.,3.)-- (7.,3.);
\draw [line width=1.pt] (1.,2.)-- (7.,2.);
\draw [line width=1.pt] (2.,7.)-- (2.,1.);
\draw [line width=1.pt] (6.,7.)-- (6.,1.);
\draw [line width=1.pt] (3.,7.)-- (3.,1.);
\draw [line width=1.pt] (4.,1.)-- (4.,7.);
\draw [line width=1.pt] (5.,7.)-- (5.,1.);
\draw [line width=1.pt] (2.,4.5)-- (5.,4.5);
\draw [line width=1.pt,dotted] (2.5,5.)-- (2.5,4.);
\draw [line width=1.pt] (3.5,6.)-- (3.5,3.);
\draw [line width=1.pt] (4.5,5.)-- (4.5,2.);
\draw [line width=1.pt] (3.,3.5)-- (6.,3.5);
\draw [line width=1.pt,dotted] (4.,2.5)-- (5.,2.5);
\draw [line width=1.pt,dotted] (5.5,4.)-- (5.5,3.);
\draw [line width=1.pt,dotted] (3.,5.5)-- (4.,5.5);
\draw [line width=1.pt] (9.,7.)-- (9.,1.);
\draw [line width=1.pt] (9.,1.)-- (15.,1.);
\draw [line width=1.pt] (15.,1.)-- (15.,7.);
\draw [line width=1.pt] (15.,7.)-- (9.,7.);
\draw [line width=1.pt,color=ffqqqq] (10.,6.)-- (10.,3.);
\draw [line width=1.pt,color=ffqqqq] (10.,3.)-- (13.,3.);
\draw [line width=1.pt,color=ffqqqq] (13.,3.)-- (13.,6.);
\draw [line width=1.pt,color=ffqqqq] (13.,6.)-- (10.,6.);
\draw [line width=1.pt,dotted,color=ffqqqq] (10.,5.5)-- (11,5.5);
\draw [line width=1.pt,dotted,color=ffqqqq] (10.5,6.)-- (10.5,5.);
\draw [line width=1.pt,dotted,color=ffqqqq] (10.,3.5)-- (11.,3.5);
\draw [line width=1.pt,dotted,color=ffqqqq] (10.5,4)-- (10.5,3.);
\draw [line width=1.pt] (10.,4.5)-- (13.,4.5);
\draw [line width=1.pt] (10.5,5.)-- (10.5,4.);
\draw [line width=1.pt] (11.5,6.)-- (11.5,3.);
\draw [line width=1.pt] (11.,5.5)-- (12,5.5);
\draw [line width=1.pt] (11.,3.5)-- (14.,3.5);
\draw [line width=1.pt] (12.5,5.)-- (12.5,2.);
\draw [line width=1.pt] (9.,6.)-- (10.,6.);
\draw [line width=1.pt] (13.,6.)-- (15.,6.);
\draw [line width=1.pt] (9.,5.)-- (15.,5.);
\draw [line width=1.pt] (9.,4.)-- (15.,4.);
\draw [line width=1.pt] (9.,3.)-- (10.,3.);
\draw [line width=1.pt] (13.,3.)-- (15.,3.);
\draw [line width=1.pt] (9.,2.)-- (15.,2.);
\draw [line width=1.pt] (10.,7.)-- (10.,6.);
\draw [line width=1.pt] (10.,3.)-- (10.,1.);
\draw [line width=1.pt] (11.,7.)-- (11.,1.);
\draw [line width=1.pt] (12.,7.)-- (12.,1.);
\draw [line width=1.pt] (13.,7.)-- (13.,6.);
\draw [line width=1.pt] (13.,3.)-- (13.,1.);
\draw [line width=1.pt] (14.,7.)-- (14.,1.);
\draw [line width=1.pt,dotted] (12.,2.5)-- (13.,2.5);
\draw [line width=1.pt,dotted] (13.5,4.)-- (13.5,3.);
\draw [line width=1.pt,dotted,color=ffqqqq] (12.,5.5)-- (13.,5.5);
\draw [line width=1.pt,dotted,color=ffqqqq] (12.5,5)-- (12.5,6.);
\draw (2,0.5) node[anchor=north west] {An initial T-mesh};
\draw (9,0.5) node[anchor=north west] {T-mesh after first modification};
\begin{scriptsize}
\draw [fill=black] (2.,4.5) circle (2.0pt);
\draw[color=black] (1.8,4.65) node {$v_6$};
\draw [fill=black] (5.,4.5) circle (2.0pt);
\draw[color=black] (5.2,4.65) node {$v_7$};
\draw [fill=black] (2.5,5.) circle (2.0pt);
\draw[color=black] (2.648326588350469,5.2) node {$v_4$};
\draw [fill=black] (2.5,4.) circle (2.0pt);
\draw[color=black] (2.65,4.2) node {$v_8$};
\draw [fill=black] (3.5,6.) circle (2.0pt);
\draw[color=black] (3.65,6.2) node {$v_{1}$};
\draw [fill=black] (3.5,3.) circle (2.0pt);
\draw[color=black] (3.7,3.2) node {$v_{12}$};
\draw [fill=black] (4.5,5.) circle (2.0pt);
\draw[color=black] (4.7,5.2) node {$v_5$};
\draw [fill=black] (4.5,2.) circle (2.0pt);
\draw[color=black] (4.7,2.2) node {$v_{16}$};
\draw [fill=black] (3.,3.5) circle (2.0pt);
\draw[color=black] (2.7,3.5) node {$v_{10}$};
\draw [fill=black] (6.,3.5) circle (2.0pt);
\draw[color=black] (6.3,3.5) node {$v_{11}$};
\draw [fill=black] (4.,2.5) circle (2.0pt);
\draw[color=black] (3.7,2.5) node {$v_{14}$};
\draw [fill=black] (5.,2.5) circle (2.0pt);
\draw[color=black] (5.3,2.5) node {$v_{15}$};
\draw [fill=black] (5.5,4.) circle (2.0pt);
\draw[color=black] (5.7,4.2) node {$v_9$};
\draw [fill=black] (5.5,3.) circle (2.0pt);
\draw[color=black] (5.7,3.2) node {$v_{13}$};
\draw [fill=black] (3.,5.5) circle (2.0pt);
\draw[color=black] (3.2,5.7) node {$v_2$};
\draw [fill=black] (4.,5.5) circle (2.0pt);
\draw[color=black] (4.2,5.7) node {$v_3$};
\draw [fill=ffqqqq] (10.,5.5) circle (2.0pt);
\draw[color=ffqqqq] (9.7,5.5) node {$w_3$};
\draw [fill=ffqqqq] (10.5,6.) circle (2.0pt);
\draw[color=ffqqqq] (10.7,6.2) node {$w_1$};
\draw [fill=ffqqqq] (10.,3.5) circle (2.0pt);
\draw[color=ffqqqq] (9.7,3.5) node {$w_5$};
\draw [fill=black] (11.,3.5) circle (2.0pt);
\draw[color=black] (11.2,3.7) node {$v_{10}$};
\draw [fill=ffqqqq] (10.5,3.) circle (2.0pt);
\draw[color=ffqqqq] (10.5,2.75) node {$w_6$};
\draw [fill=black] (10.,4.5) circle (2.0pt);
\draw[color=black] (9.7,4.5) node {$v_6$};
\draw [fill=black] (13.,4.5) circle (2.0pt);
\draw[color=black] (13.3,4.5) node {$v_7$};
\draw [fill=black] (11.5,6.) circle (2.0pt);
\draw[color=black] (11.7,6.2) node {$v_1$};
\draw [fill=black] (11.5,3.) circle (2.0pt);
\draw[color=black] (11.5,2.75) node {$v_{12}$};
\draw [fill=black] (14.,3.5) circle (2.0pt);
\draw[color=black] (14.3,3.5) node {$v_{11}$};
\draw [fill=black] (12.5,5) circle (2.0pt);
\draw[color=black] (12.7,5.2) node {$v_5$};
\draw [fill=black] (12.5,2) circle (2.0pt);
\draw[color=black] (12.5,1.7) node {$v_{16}$};
\draw [fill=qqqqff] (10.,6.) circle (2.5pt);
\draw[color=qqqqff] (9.7,6.3) node {$A$};
\draw [fill=qqqqff] (13.,6.) circle (2.5pt);
\draw[color=qqqqff] (13.3,6.3) node {$B$};
\draw [fill=qqqqff] (10.,3.) circle (2.5pt);
\draw[color=qqqqff] (9.7,2.7) node {$C$};
\draw [fill=qqqqff] (13.,3.) circle (2.5pt);
\draw[color=qqqqff] (12.8,3.2) node {$D$};
\draw [fill=black] (12.,2.5) circle (2.0pt);
\draw[color=black] (11.7,2.5) node {$v_{14}$};
\draw [fill=black] (13.,2.5) circle (2.0pt);
\draw[color=black] (13.3,2.5) node {$v_{15}$};
\draw [fill=black] (13.5,4.) circle (2.0pt);
\draw[color=black] (13.5,4.2) node {$v_9$};
\draw [fill=black] (13.5,3.) circle (2.0pt);
\draw[color=black] (13.7,3.2) node {$v_{13}$};
\draw [fill=ffqqqq] (13.,5.5) circle (2.0pt);
\draw[color=ffqqqq] (13.2,5.7) node {$w_4$};
\draw [fill=ffqqqq] (12.5,6.) circle (2.0pt);
\draw[color=ffqqqq] (12.7,6.2) node {$w_2$};
\draw [fill=black] (11,5.) circle (2.0pt);
\draw[color=black] (11.2,5.2) node {$E$};
\draw [fill=black] (11,2.) circle (2.0pt);
\draw[color=black] (11.2,2.2) node {$F$};
\draw [fill=black] (14,5.) circle (2.0pt);
\draw[color=black] (14.2,5.2) node {$H$};
\draw [fill=black] (14,2.) circle (2.0pt);
\draw[color=black] (14.2,2.2) node {$G$};
\end{scriptsize}
\end{tikzpicture}
\caption{\label{fig11} T-mesh modification process}
\end{figure}

The basic idea is as follows. We modify the hierarchical T-mesh $\mathscr T$ to make it become a hierarchical T-mesh $\mathscr T'$ formed by subdividing a collection of $(d-1)\times(d-1)$ tensor product submeshes at each level, and $\mathscr T\subset \mathscr T'$. To do so, we use a $(d-1)\times(d-1)$ tensor product submesh as a template to slide along the boundary of the subdivided region 
until the boundary is covered by a collection of $(d-1)\times(d-1)$ tensor product submeshes. The cells covered by the submeshes are subdivided accordingly. 

For the T-mesh in the Figure~\ref{fig11},
we first find a 
$3\times 3$ tensor product submesh (whose occupation region is denoted as $ABCD$) which covers part of the boundary of the subdivided region. Then all the cells within $ABCD$ are subdivided if they are not subdivided yet. Next, we find another 
$3\times 3$ tensor product  submesh (whose occupied region is denoted as $EFGH$) which covers the result of the boundary of the subdivided region, and subdivide the cells within $EFGH$ if they are not subdivided. The final outcome T-mesh $\mathscr T'$ is shown in Figure~\ref{fig12}.

\begin{figure}
    \centering
\definecolor{ttffcc}{rgb}{0.2,1.,0.8}
\definecolor{ffqqqq}{rgb}{1.,0.,0.}
\begin{tikzpicture}[line cap=round,line join=round,>=triangle 45,x=1.0cm,y=1.0cm]
\draw [line width=1.pt] (1.,1.)-- (7.,1.);
\draw [line width=1.pt] (1.,1.)-- (1.,7.);
\draw [line width=1.pt] (1.,7.)-- (7.,7.);
\draw [line width=1.pt] (7.,7.)-- (7.,1.);
\draw [line width=1.pt] (1.,6.)-- (7.,6.);
\draw [line width=1.pt] (1.,5.)-- (7.,5.);
\draw [line width=1.pt] (1.,4.)-- (7.,4.);
\draw [line width=1.pt] (1.,3.)-- (7.,3.);
\draw [line width=1.pt] (1.,2.)-- (7.,2.);
\draw [line width=1.pt] (2.,1.)-- (2.,7.);
\draw [line width=1.pt] (3.,7.)-- (3.,1.);
\draw [line width=1.pt] (4.,1.)-- (4.,7.);
\draw [line width=1.pt] (5.,7.)-- (5.,1.);
\draw [line width=1.pt] (6.,1.)-- (6.,7.);
\draw [line width=1.pt] (2.,4.5)-- (5.,4.5);
\draw [line width=1.pt] (3.,3.5)-- (6.,3.5);
\draw [line width=1.pt] (3.5,6.)-- (3.5,3.);
\draw [line width=1.pt] (4.5,5.)-- (4.5,2.);
\draw [line width=1.pt] (3.,5.5)-- (4.,5.5);
\draw [line width=1.pt] (2.5,5.)-- (2.5,4.);
\draw [line width=1.pt] (5.5,4.)-- (5.5,3.);
\draw [line width=1.pt] (4.,2.5)-- (5.,2.5);
\draw [line width=1.pt,dotted,color=ffqqqq] (2.5,6.)-- (2.5,5.);
\draw [line width=1.pt,dotted,color=ffqqqq] (2.,5.5)-- (3.,5.5);
\draw [line width=1.pt,dotted,color=ffqqqq] (2.5,4.)-- (2.5,3.);
\draw [line width=1.pt,dotted,color=ffqqqq] (3.,3.5)-- (2.,3.5);
\draw [line width=1.pt,dotted,color=ffqqqq] (4.,5.5)-- (5.,5.5);
\draw [line width=1.pt,dotted,color=ffqqqq] (4.5,5.)-- (4.5,6.);
\draw [line width=1.pt,dotted,color=ffqqqq] (3.5,3.)-- (3.5,2.);
\draw [line width=1.pt,dotted,color=ffqqqq] (4.,2.5)-- (3.,2.5);
\draw [line width=1.pt,dotted,color=ffqqqq] (5.,2.5)-- (6.,2.5);
\draw [line width=1.pt,dotted,color=ffqqqq] (5.5,4.)-- (5.5,5.);
\draw [line width=1.pt,dotted,color=ffqqqq] (5.,4.5)-- (6.,4.5);
\draw [line width=1.pt,dotted,color=ffqqqq] (5.5,3)-- (5.5,2.);
\draw [line width=2.4pt,color=ttffcc] (2.,6.)-- (2.,3.);
\draw [line width=2.4pt,color=ttffcc] (2.,6.)-- (5.,6.);
\draw [line width=2.4pt,color=ttffcc] (5.,6.)-- (5.,3.);
\draw [line width=2.4pt,color=ttffcc] (5.,3.)-- (2.,3.);
\draw [line width=2.4pt,color=ttffcc] (3.,5.)-- (3.,2.);
\draw [line width=2.4pt,color=ttffcc] (3.,2.)-- (6.,2.);
\draw [line width=2.4pt,color=ttffcc] (6.,2.)-- (6.,5.);
\draw [line width=2.4pt,color=ttffcc] (6.,5.)-- (3.,5.);
\end{tikzpicture}
    \caption{\label{fig12}Modified T-mesh}
\end{figure}
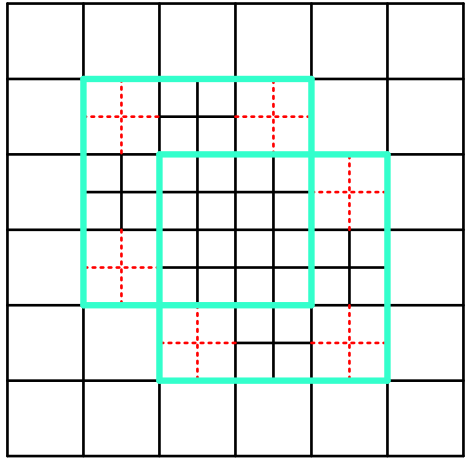

\medskip

\section{Hierarchical T-mesh and its CVR graph}

As application of the results in previous sections, we prove a conjecture concerning the relationship between the dimension of $S_d(\mathscr{T})$ and that of $S_d(\mathscr{C})$, where $\mathscr{C}$ is the CVR (cross-vertex relationship) graph of the hierarchical T-mesh $\mathscr{T}$. The CVR conjecture posits that constructing a set of basis functions for the spline space over the CVR graph, which is a lower-degree spline space with a readily constructible basis, enables the derivation of a basis for the spline space over the original hierarchical T-mesh. Details on constructing basis functions for spline spaces over hierarchical T-meshes are provided in~\cite{deng2017bases,liu2023space,liu2025isogeometric}, though these constructions are limited to specific degrees, such as biquadratic and bicubic. In this paper, we address the general degree $d$.

\begin{definition}(\cite{jin2013})
    Given a T-mesh $\mathscr{T}$, we call a graph related to $\mathscr{T}$ the \textbf{\textsf{CVR} graph of $\mathscr{T}$} if it is constructed by retaining the cross vertices and the edges with two endpoints that are cross-vertices and removing the other vertices and the edges in $\mathscr{T}$.  
\end{definition}

     Figure~\ref{fig13} illustrates a T-mesh (shown on the left) and its corresponding  CVR graph(shown on the right). 

\begin{figure}
    \centering
    \begin{tikzpicture}[line cap=round,line join=round,>=triangle 45,x=1.0cm,y=1.0cm]
\draw [line width=1.pt] (1.,1.)-- (7.,1.);
\draw [line width=1.pt] (1.,1.)-- (1.,7.);
\draw [line width=1.pt] (1.,7.)-- (7.,7.);
\draw [line width=1.pt] (7.,7.)-- (7.,1.);
\draw [line width=1.pt] (1.,6.)-- (7.,6.);
\draw [line width=1.pt] (1.,5.)-- (7.,5.);
\draw [line width=1.pt] (1.,4.)-- (7.,4.);
\draw [line width=1.pt] (1.,3.)-- (7.,3.);
\draw [line width=1.pt] (1.,2.)-- (7.,2.);
\draw [line width=1.pt] (2.,1.)-- (2.,7.);
\draw [line width=1.pt] (3.,7.)-- (3.,1.);
\draw [line width=1.pt] (4.,1.)-- (4.,7.);
\draw [line width=1.pt] (5.,7.)-- (5.,1.);
\draw [line width=1.pt] (6.,1.)-- (6.,7.);
\draw [line width=1.pt] (9.,6.)-- (9.,2.);
\draw [line width=1.pt] (9.,6.)-- (13.,6.);
\draw [line width=1.pt] (13.,6.)-- (13.,2.);
\draw [line width=1.pt] (13.,2.)-- (9.,2.);
\draw [line width=1.pt] (10.,6.)-- (10.,2.);
\draw [line width=1.pt] (11.,2.)-- (11.,6.);
\draw [line width=1.pt] (12.,6.)-- (12.,2.);
\draw [line width=1.pt] (13.,5.)-- (9.,5.);
\draw [line width=1.pt] (9.,4.)-- (13.,4.);
\draw [line width=1.pt] (9.,3.)-- (13.,3.);
\draw [line width=1.pt] (9.5,5.5)-- (11.5,5.5);
\draw [line width=1.pt] (9.5,5.5)-- (9.5,3.5);
\draw [line width=1.pt] (9.5,4.5)-- (12.5,4.5);
\draw [line width=1.pt] (12.5,4.5)-- (12.5,2.5);
\draw [line width=1.pt] (10.5,5.5)-- (10.5,2.5);
\draw [line width=1.pt] (10.5,2.5)-- (12.5,2.5);
\draw [line width=1.pt] (9.5,3.5)-- (12.5,3.5);
\draw [line width=1.pt] (11.5,5.5)-- (11.5,2.5);
\draw [line width=1.pt] (2.5,6.)-- (2.5,3.);
\draw [line width=1.pt] (2.,5.5)-- (5.,5.5);
\draw [line width=1.pt] (4.5,6.)-- (4.5,2.);
\draw [line width=1.pt] (2.,3.5)-- (6.,3.5);
\draw [line width=1.pt] (2.,4.5)-- (6.,4.5);
\draw [line width=1.pt] (3.5,6.)-- (3.5,2.);
\draw [line width=1.pt] (3.,2.5)-- (6.,2.5);
\draw [line width=1.pt] (5.5,5.)-- (5.5,2.);
\draw (3.5,0.6) node[anchor=north west] {$\mathscr{T}$};
\draw (10.5,0.6) node[anchor=north west] {$\mathscr{C}$};
\end{tikzpicture}
    \caption{\label{fig13}A T-mesh $\mathscr{T}$ and its CVR graph $\mathscr{C}$}
\end{figure}
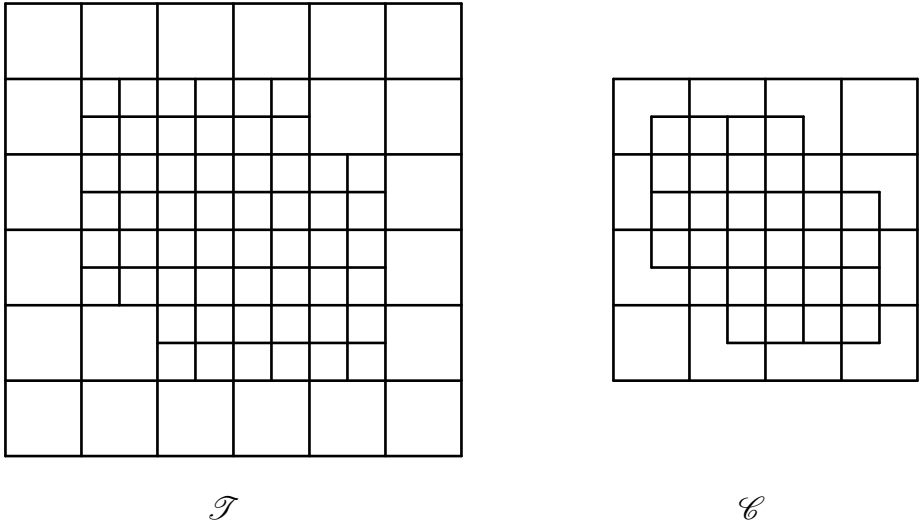

\bigskip

In~\cite{jin2013}, Deng et al. proposed a conjecture regarding the relationship between the dimension of the spline space over a hierarchical T-mesh and the dimension of the spline space over its CVR graph. In the following, we present a version of this conjecture specifically for spline spaces with the highest order of smoothness.

\begin{conjecture}\label{conj}
Let $\mathscr{T}$ be a hierarchical T-mesh with the CVR graph being $\mathscr{C}$. When $d\ge 2$, one has
$$\dim\bar{S}_d(\mathscr{T})=\dim\bar{S}_{d-2}(\mathscr{C}).$$
\end{conjecture}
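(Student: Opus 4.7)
The plan is to prove the conjecture by applying the dimension formula~\eqref{dimf} from Theorem~\ref{thm4.2} to both sides and matching the resulting combinatorial expressions. Throughout I will work under the hypotheses of Theorem~\ref{thm4.2}, in which $\mathscr T$ is a hierarchical T-mesh generated by successive $(d-1)\times(d-1)$ tensor product subdivisions (possibly with partial overlaps), since this is the setting in which both sides admit a closed-form count.

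First I would apply~\eqref{dimf} to the left-hand side to obtain
$$\dim\bar{S}_d(\mathscr T)=v_\mathscr T-(d+1)t_\mathscr T+(d+1)^2+\gamma_\mathscr T,$$
where $v_\mathscr T$, $t_\mathscr T$, $\gamma_\mathscr T$ count vertices on $l$-edges, the $l$-edges themselves, and the isolated $(d-1)\times(d-1)$ subdivisions of interior cells. The base case $\mathscr T=\mathscr T_0$ is immediate: when $\mathscr T_0$ is a $p_0\times q_0$ tensor-product mesh, the cross-vertices are exactly the interior vertices and $\mathscr C$ reduces to a $(p_0-2)\times(q_0-2)$ tensor-product mesh, so both sides equal $(p_0-d-1)(q_0-d-1)$.

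The central structural step is to describe $\mathscr C$ precisely enough to apply an analogous formula. Each isolated $(d-1)\times(d-1)$ subdivision of a block in $\mathscr T$ introduces, by the counts implicit in the proof of Theorem~\ref{thm4.1}, $(d-1)(3d-5)$ new cross-vertices arranged in the interior of the block together with $4(d-1)$ new T-nodes on the block's outer boundary; the latter are discarded in $\mathscr C$, while the former are retained. I would then argue that the resulting $\mathscr C$ is itself a hierarchical T-mesh to which~\eqref{dimf} applies with $d$ replaced by $d-2$, with isolated subdivisions in $\mathscr T$ corresponding bijectively to isolated subdivisions in $\mathscr C$ (so $\gamma_\mathscr C=\gamma_\mathscr T$), yielding
$$\dim\bar{S}_{d-2}(\mathscr C)=v_\mathscr C-(d-1)t_\mathscr C+(d-1)^2+\gamma_\mathscr C.$$
A direct inductive check on the level of $\mathscr T$, computing the incremental changes $\Delta v$, $\Delta t$ in each mesh at each level and verifying the weighted increments coincide, together with the base case, completes the proof in the isolated case; overlapping subdivisions are reduced to the isolated case by invoking the modification strategy of Section~4, which preserves both dimensions by Lemma~\ref{lem2.1}.

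The main obstacle is the structural step: rigorously describing the CVR graph $\mathscr C$ as a hierarchical T-mesh amenable to Theorem~\ref{thm4.2} with shifted parameters, and in particular handling the T-nodes of $\mathscr T$ that sit on boundaries shared between two adjacent $(d-1)\times(d-1)$ subdivisions, which are promoted to cross-vertices and hence retained in $\mathscr C$ (so that the correspondence of isolated components remains bijective). The small-degree cases $d=2$ and $d=3$ need separate treatment since the analogous $(d-3)\times(d-3)$ subdivision structure for $\mathscr C$ becomes degenerate; there I would compute $\dim\bar{S}_0(\mathscr C)$ or $\dim\bar{S}_1(\mathscr C)$ directly via Proposition~\ref{prop2.1} applied to the explicit structure of $\mathscr C$ and check equality with $\dim\bar{S}_d(\mathscr T)$ case by case.
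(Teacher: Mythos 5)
Your overall plan --- evaluate both sides by explicit combinatorial formulas and match the counts --- targets the correct identity, and your vertex counts (each block contributes $(d-1)(3d-5)$ cross-vertices and discards $4(d-1)$ T-nodes, so on each new level $v_{\mathscr C}=v_{\mathscr T}-2t$ and the weights $-(d+1)$ versus $-(d-1)$ compensate exactly) are the right arithmetic. But the step you yourself flag as the main obstacle is a genuine gap, and your proposed resolution does not work: the CVR graph $\mathscr C$ is \emph{not} a hierarchical T-mesh generated by $(d-3)\times(d-3)$ tensor product subdivisions, so Theorem~\ref{thm4.2} with $d$ replaced by $d-2$ cannot be invoked for the right-hand side. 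Concretely, the image in $\mathscr C$ of one $(d-1)\times(d-1)$ cross-subdivided block is a tensor product T-connected component with $d-1$ T $l$-edges in each direction carrying $2d-3$ vertices each, whereas a $(d-3)\times(d-3)$ tensor product subdivision for degree $d-2$ would have $d-3$ $l$-edges with $2d-5$ vertices each; these structures do not coincide, and the new $l$-edges of $\mathscr C$ need not align with cell boundaries of $\mathscr C_0$. What is actually needed --- and what the paper supplies in Lemma~\ref{lem4.1} and Lemma~\ref{lem4.3} --- is a direct verification that (i) a reasonable order of the non-isolated $l$-edges of $\mathscr T$ transfers to $\mathscr C$, because deleting the two endpoints of each $l$-edge lowers its vertex count by exactly $2$, matching the degree drop from $d$ to $d-2$; (ii) an isolated block contributes $(d-2)^2$ on both sides by Theorem~\ref{thm3.1}; and (iii) the level-by-level recursion holds for $\mathscr C$ itself. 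With these in place the small degrees $d=2,3$ require no special treatment; the degeneracy you anticipate there is an artifact of forcing $\mathscr C$ into the $(d-3)\times(d-3)$ mold.

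A second, independent flaw is your treatment of overlapping subdivisions. The modification strategy of Section~4 enlarges the mesh and changes $v$, $t$ and $\gamma$; it is a device for stabilizing the dimension of an arbitrary hierarchical T-mesh, not a dimension-preserving reduction, and Lemma~\ref{lem2.1} concerns the transfer of bipartite-partition additivity under extension of $l$-edges, not invariance of $\dim\bar{S}_d$ under refinement. No reduction to the isolated case is needed anyway: overlapping blocks fall into the reasonable-order case of Theorem~\ref{thm4.1}, and the corresponding $l$-edges of $\mathscr C$ inherit a reasonable order by point (i) above, which is exactly how the paper disposes of that case.
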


When a hierarchical T-mesh is subdivided using the cross subdivision mode and all the subdivided cells are interior cells, the resulting CVR graph can be treated as a regular T-mesh with some $L$-nodes (interior vertices connected by two edges). An example is shown in Figure~\ref{fig13}. It is important to note that all previous results based on the smoothing cofactor method can be extended to T-meshes with $L$-nodes. Therefore, for the remainder of this paper, we will assume that all the subdivided cells are interior cells. Finally, a general method for processing the case of subdividing boundary cells will be presented. Before proving the conjecture, we first introduce two lemmas.

\begin{lemma}\label{lem4.1}
   Given a tensor product mesh $\mathscr{T}_0$, let $\mathscr{T}$ be the T-mesh obtained by subdividing a collection of $(d-1)\times(d-1)$ submeshes of $\mathscr{T}_0$ under cross subdivision, where the submeshes can partially overlap. Let $\mathscr{C}$ be the CVR graph of $\mathscr{T}$. Then
$$\dim\text{CVS}_1[L(\mathscr{T})]=\dim\text{CVS}_2[L(\mathscr{C})],$$
where $\text{CVS}_1$ and $\text{CVS}_2$ are the conformality vector space corresponding to the spline spaces $S_d(\mathscr{T})$ and $S_{d-2}(\mathscr{C})$, respectively, and $L(\mathscr{C})$ is the set of all the T $l$-edges in $\mathscr{C}$.

\end{lemma}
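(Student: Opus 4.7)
The plan is to compute $\dim\textsf{CVS}_1[L(\mathscr T)]$ and $\dim\textsf{CVS}_2[L(\mathscr C)]$ via Theorem~\ref{thm4.1} and an analogous formula at the shifted degree $d-2$, then match them term by term. First I apply Theorem~\ref{thm4.1} to $\mathscr T$ to obtain
\[
\dim\textsf{CVS}_1[L(\mathscr T)] \;=\; v - (d+1)t + \gamma,
\]
where $v$, $t$, and $\gamma$ denote, respectively, the number of vertices of $L(\mathscr T)$, the number of T $l$-edges of $L(\mathscr T)$, and the number of isolated $(d-1)\times(d-1)$ subdivisions of $\mathscr T$.

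Next I would rerun the proof of Theorem~\ref{thm4.1} on $\mathscr C$ at degree $d-2$ to obtain the parallel identity
\[
\dim\textsf{CVS}_2[L(\mathscr C)] \;=\; v_{\mathscr C} - (d-1)t_{\mathscr C} + \gamma_{\mathscr C}.
\]
The bijection between T $l$-edges is immediate: each T $l$-edge of $L(\mathscr T)$ becomes a T $l$-edge of $L(\mathscr C)$ once its two T-node endpoints are deleted, and the two adjacent cross-vertices, each losing one edge in $\mathscr C$, become the new T-node endpoints of the shortened edge in $\mathscr C$; hence $t_{\mathscr C}=t$. For each isolated $(d-1)\times(d-1)$ subdivision the corresponding piece of $\mathscr C$ is a non-diagonalizable tensor-product T-connected component with $p=q=2d-3$ vertices per $l$-edge, so Theorem~\ref{thm3.1} at degree $d-2$ gives $(2d-3-(d-1))^2=(d-2)^2$, matching the direct count $(d-1)(3d-5)-2(d-1)^2+1=(d-2)^2$ and hence the per-component formula $v'-(d-1)t'+1$. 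For the non-isolated part I check that the reasonable order constructed in Case~2 of Theorem~\ref{thm4.1}, which has $r_{\mathscr T}(\ell)\ge d+1$ per edge, still yields $r_{\mathscr C}(\ell)\ge d-1$ once the two T-nodes are deleted, so the reasonable-order condition at degree $d-2$ holds and Proposition~\ref{prop2.2} contributes $v_{\mathscr C}^{\text{non-iso}}-(d-1)t_{\mathscr C}^{\text{non-iso}}$. Combining the two parts yields $\gamma_{\mathscr C}=\gamma$.

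It then remains to show $v-v_{\mathscr C}=2t$. The vertices of $L(\mathscr T)$ that disappear in $\mathscr C$ are exactly the T-nodes of $\mathscr T$ lying on $L(\mathscr T)$. In the single-level cross-subdivision setup, the new T $l$-edges sit on half-integer grid lines of $\mathscr T_0$ while their endpoints sit on the integer grid lines of $\mathscr T_0$, and no T $l$-edge of $\mathscr T$ lives on an integer grid line; hence no T-node can be interior to another T $l$-edge, and every T-node is the endpoint of a unique T $l$-edge. Therefore the number of T-nodes on $L(\mathscr T)$ is exactly $2t$, so $v-v_{\mathscr C}=2t$. Plugging everything in,
\[
\dim\textsf{CVS}_1-\dim\textsf{CVS}_2 \;=\; (v-v_{\mathscr C})-\bigl((d+1)-(d-1)\bigr)t \;=\; 2t-2t \;=\; 0,
\]
which is the lemma.

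The main obstacle will be Step~2: one has to carefully transcribe the casework of Theorem~\ref{thm4.1} at the shifted degree $d-2$, verifying both the per-component arithmetic $(d-2)^2=v'-(d-1)t'+1$ and the preservation of the reasonable-order construction after the deletion of two vertices per $l$-edge. A secondary but delicate point is to treat adjacent subdivisions whose T $l$-edges merge, so that the bijection between isolated components of $\mathscr T$ and $\mathscr C$ stays clean and the equality $\gamma_{\mathscr C}=\gamma$ is not spuriously broken.
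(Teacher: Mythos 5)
Your proposal is correct and follows essentially the same route as the paper's proof: both reduce to the two structural cases of Theorem~\ref{thm4.1} (isolated $(d-1)\times(d-1)$ components handled by Theorem~\ref{thm3.1} giving $(d-2)^2$ on each side, and the reasonable-order part handled by the counts $t_{\mathscr C}=t$ and $v_{\mathscr C}=v-2t$ after deleting the two T-node endpoints of each T $l$-edge). Your write-up is merely a repackaging of that argument as a term-by-term comparison of the formula $v-(d+1)t+\gamma$ with its degree-$(d-2)$ analogue, and is if anything slightly more explicit than the paper about the mixed case and about why no T-node lies in the interior of another T $l$-edge.
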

\begin{proof}
    Let $\{L_1, L_2\}$ be a bipartite partition of $L(\mathscr{T})$, where $N(l)=d-1$ for any $l\in L_1$ and $N(l)>d-1$ for any $l\in L_2$. Here, $N(l)$ represents the number of cells crossed by the edge $l$. Based on Theorem~\ref{thm4.1}, we only prove the conclusion for the following two cases.
    
    \begin{enumerate}
        \item[1.] When $L_2\neq\varnothing$ and there is no isolated $(d-1)\times(d-1)$ subdivision.

        By Theorem~\ref{thm4.1}, in this case, all T $l$-edges in $L(\mathscr{T})$ have a reasonable order denoted as $l_1\succ\ldots\succ l_t$. Then, $\dim\textsf{CVS}_1[L(\mathscr{T})]=v-(d+1)t$, where $v$ is the number of vertices in $L(\mathscr{T})$ and $t$ is the number of T $l$-edges in $L(\mathscr{T})$.

On the other hand, suppose $\bar{l}_i$ is the T $l$-edge in the CVR graph $\mathscr{C}$ corresponding to $l_i$ in $\mathscr{T}$, for $i=1,2,\ldots,t$.
Since $\bar{l}_i$ is obtained by excluding the two endpoints of $l_i$, the order $\bar{l}_1\succ\ldots\succ \bar{l}_t$ is also a reasonable order when considering the spline space $S_{d-2}(\mathscr{C})$. Thus, $\dim\textsf{CVS}_2[L(\mathscr{T})]=v_C-(d-1)t_C$, where $v_C$ is the number of vertices in $L(\mathscr{C})$ and $t_C$ is the number of T $l$-edges in $L(\mathscr{C})$.

As $v_C=v-2t$ and $t_C=t$, it follows that
$$\dim\textsf{CVS}_2[L(\mathscr{T})]=v_C-(d-1)t_C=v-(d+1)t=\dim\textsf{CVS}_1[L(\mathscr{T})].$$
        
        \item[2.] When $L_2=\varnothing$.

        By Theorem~\ref{thm4.1}, in this case, $\mathscr{T}_0$ is subdivided with a collection of isolated $(d-1)\times(d-1)$ submeshes. One can consider the subdivision of only one $(d-1)\times(d-1)$ submesh.  As per Theorem~\ref{thm3.1}, the dimension of the conformality vector space corresponding to $S_d(\mathscr{T})$ is given by $\dim\textsf{CVS}_1[L(\mathscr{T})]=[2d-1-(d+1)][2d-1-(d+1)]=(d-2)^2$.

On the other hand, the set of all the T $l$-edges in $L(\mathscr{C})$ is also a tensor product T-connected component with $2d-3$ vertices in each $l$-edge. Hence, by Theorem~\ref{thm3.1}, the dimension of the conformality vector space corresponding to $S_{d-2}(\mathscr{C})$ is $\dim\textsf{CVS}_2[L(\mathscr{T})]=[2d-3-(d-1)][2d-3-(d-1)]=(d-2)^2$.

As a result, we have $\dim\textsf{CVS}_1[L(\mathscr{T})]=\dim\textsf{CVS}_2[L(\mathscr{T})]$.
    \end{enumerate}
    $\Box$
\end{proof}

For a hierarchical T-mesh, the CVR graph can be derived on a level-by-level basis, resulting in a hierarchical structure. In the following, we will prove that the conformality vector space of the T-connected component of the CVR graph can be calculated in a recursive manner using a similar method as in the proof of Theorem~\ref{thm3.2}.

\begin{lemma}\label{lem4.3}
 Given a hierarchical T-mesh $\mathscr{T}$ under the cross subdivision mode,  let $\mathscr{C}$ be the corresponding CVR graph of $\mathscr T$. In each level, the T-mesh is subdivided by a collection of $(d-1)\times(d-1)$ tensor product subdivisions, and these subdivision regions may partially overlap.
Then the dimension of conformality vector space of the T-connected component of the CVR graph can be calculated in a recursive manner as
\begin{equation}\label{CVR}
\dim\textsf{CVS}_2[L(\mathscr{C})]=\sum\limits_{i=1}^{lev(\mathscr{C})}\dim\textsf{CVS}_2[C_i],
\end{equation}
where $C_i$ is the set of all the T $l$-edges and vertices in level $i$. 
\end{lemma}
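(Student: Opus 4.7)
The plan is to prove~(\ref{CVR}) by induction on $lev(\mathscr C)$, mirroring the two-sided bound used in the proof of Theorem~\ref{thm3.2} but now carried out directly on the CVR graph at degree $d-2$. The base case $lev(\mathscr C)=1$ is trivial since $L(\mathscr C)=C_1$.

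For the inductive step, let $M := L(\mathscr C)\setminus C_1$, so that $\{C_1,M\}$ is a bipartite partition of $L(\mathscr C)$ with $C_1$ a sub-component. Lemma~\ref{lem3.2} (which is stated for an arbitrary T-connected component and therefore applies to a CVR graph as well) supplies the upper inequality
$$\dim\textsf{CVS}_2[L(\mathscr C)]\le\dim\textsf{CVS}_2[C_1]+\dim\textsf{CVS}_2[M].$$
For the reverse inequality I would imitate the construction of $\mathscr T'$ from the proof of Theorem~\ref{thm3.2} inside the CVR graph: form an auxiliary graph $\mathscr C'$ by adjoining to $M$ the CVR images of the level-$1$ cell boundaries enclosing any level-$2$ subdivision. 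The degree-$(d-2)$ analogue of Lemma~\ref{lem3.3} applied to $\mathscr C'$ would give
$$\dim\bar S_{d-2}(\mathscr C')=\dim\bar S_{d-2}(\mathscr C'_{TP})+\dim\textsf{CVS}_2[M],$$
where $\mathscr C'_{TP}$ is the tensor-product portion generated by $C_1$ together with its boundary $l$-edges. Since $C_1$ is a non-diagonalizable tensor product T-connected component (being the CVR image of the level-$1$ $(d-1)\times(d-1)$ subdivisions), Theorem~\ref{thm3.1} invoked at degree $d-2$ then identifies $\dim\bar S_{d-2}(\mathscr C'_{TP})=\dim\textsf{CVS}_2[C_1]$. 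Combining with the trivial bound $\dim\bar S_{d-2}(\mathscr C')\le\dim\textsf{CVS}_2[L(\mathscr C)]$ closes the lower inequality, and applying the induction hypothesis to $M$ (whose CVR level depth is one less) produces the full level-by-level sum.

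The main obstacle will be checking that the degree-$(d-2)$ analogue of Lemma~\ref{lem3.3} carries over cleanly to the CVR graph. Each T $l$-edge in $\mathscr C$ has its two endpoints stripped compared with the corresponding T $l$-edge in $\mathscr T$, so the vertex count on each $l$-edge drops by two while the spline degree also drops by two. The parameter-matching that makes the two sides of the recursive identity numerically agree is exactly the $(p-d-1)(q-d-1)$ invariance already exploited in Lemma~\ref{lem4.1} under the simultaneous shift $d\mapsto d-2,\ p\mapsto p-2,\ q\mapsto q-2$, so both the counting in the decomposition lemma and the application of Theorem~\ref{thm3.1} should go through without further subtlety, provided $C_1$ retains at least $d-1$ horizontal and $d-1$ vertical $l$-edges in each tensor-product component; this is the CVR counterpart of the ``$d+1$'' hypothesis on $\mathscr T_0$ imposed in Theorem~\ref{thm3.2}. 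Boundary-cell subdivisions, noted in the paragraph preceding the lemma to be deferred, can be absorbed afterward by the same extension-to-boundary trick used when passing from $\mathscr T$ to $\bar{\mathscr T}$ in the proof of Theorem~\ref{thm4.2}.
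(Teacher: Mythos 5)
Your proposal takes a genuinely different route from the paper's. The paper proves this lemma the same way it proves Corollary~\ref{cor3.3}: for $lev(\mathscr C)=2$ it completes the level-$2$ subdivision to a $C_2'$ refining every cell of the level-$1$ block, computes all three dimensions explicitly ($L(\mathscr C')$ is a $(4d-5)\times(4d-5)$ tensor-product mesh, $C_1$ is a non-diagonalizable tensor product T-connected component handled by Theorem~\ref{thm3.1}, and $C_2'$ has a reasonable order), verifies the identity $(d-2)^2+4(d-1)(2d-3)=(3d-4)^2$, and then descends from $C_2'$ to the actual $C_2$ via Lemma~\ref{lem2.1}. You instead transplant the sandwich argument of Theorem~\ref{thm3.2} to the CVR graph: Lemma~\ref{lem3.2} for the upper bound, and a Lemma~\ref{lem3.3}-type decomposition of an auxiliary homogeneous-boundary spline space for the lower bound. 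Your route avoids Lemma~\ref{lem2.1} and the completed subdivision altogether; the paper's route avoids transporting $\bar S_{d-2}$ and Lemma~\ref{lem3.3} to the CVR graph (a T-mesh with $L$-nodes) and stays inside the conformality-matrix framework. Your closing observation --- that the numerics are governed by the invariance of $(p-d-1)(q-d-1)$ under $d\mapsto d-2$, $p\mapsto p-2$ --- is exactly the right reason the transplant can work.

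Two points need repair before the argument closes. First, your auxiliary graph is described as ``$M$ plus the CVR images of the level-$1$ cell boundaries enclosing any level-$2$ subdivision''; this is off by one level. To separate $C_1$ from $M$ you must enclose the region of the \emph{level-$1$} subdivision (the $(d-1)\times(d-1)$ block of level-$0$ cells), and $\mathscr C'$ must contain $C_1$ itself --- otherwise $\mathscr C'_{TP}$ cannot be ``generated by $C_1$'' as you later assert and the identification $\dim\bar S_{d-2}(\mathscr C'_{TP})=\dim\textsf{CVS}_2[C_1]$ fails. With the correct region, each direction of $\mathscr C'_{TP}$ carries the $d-1$ lines of $C_1$ plus the $d-2$ interior level-$0$ lines, i.e.\ $2d-3$ lines in all, giving $\bigl(2d-3-(d-1)\bigr)^2=(d-2)^2=\dim\textsf{CVS}_2[C_1]$ as required. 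Second, you assert that $C_1$ is a non-diagonalizable tensor product T-connected component, but the lemma allows the level-$1$ subdivisions to overlap or to occur in several disjoint pieces; in the overlapping case $C_1$ is not a tensor product component at all, and by the analysis of Theorem~\ref{thm4.1} and Lemma~\ref{lem4.1} it then has a reasonable order, so Proposition~\ref{prop2.2} gives additivity directly. You need this case split --- isolated components treated one at a time by your sandwich argument, everything else by reasonable order --- before invoking the induction hypothesis on $M$.
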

\begin{proof}
  We will first prove the case where $lev(\mathscr{C})=2$. Consider $C_1$ as a $(d-1)\times(d-1)$ subdivision. Otherwise, all $l$-edges in $C_1$ are in a reasonable order, and thus the equation~(\ref{CVR}) holds immediately.

However, in this case, it is not clear which cells in level 1 are subdivided by $C_2$. To address this issue, we consider a new subdivision $C'_2$ which subdivides all $(d-1)^2$ cells in the $(d-1)\times(d-1)$ subdivision in level 1. Note that $C_2$ is a subset of $C'_2$, and these two subdivisions satisfy the conditions in Lemma~\ref{lem2.1}. If we can prove that $\dim\textsf{CVS}_2[L(\mathscr{C}')]=\dim\textsf{CVS}_2[C_1]+\dim\textsf{CVS}_2[C'_2]$, where $L(\mathscr{C'})$ includes $C_1$ and $C'_2$, then the equation~(\ref{CVR}) follows by Lemma~\ref{lem2.1}.

Since $L(\mathscr{C}')$ forms a $(4d-5)\times(4d-5)$ tensor-product mesh $\mathscr{T}'$, it follows that $$\dim\textsf{CVS}_2[L(\mathscr{C}')]=\dim\bar{S}_{d-2}(\mathscr{T}')=[4d-5-(d-2+1)][4d-5-(d-2+1)]=(3d-4)^2.$$

Using Theorem~\ref{thm3.1}, $$\dim\textsf{CVS}_2[C_1]=[(2d-3)-(d-1)][(2d-3)-(d-1)]=(d-2)^2.$$

For $C'_2$, since all the $l$-edges have a reasonable order, it has $4(d-1)(3d-4)$ vertices and $4(d-1)$ $l$-edges. Thus,
$$\dim\textsf{CVS}_2[C'_2]=4(d-1)(3d-4)-4(d-1)^2=4(d-1)(2d-3).$$

Therefore, $$\dim\textsf{CVS}_2[C_1]+\dim\textsf{CVS}_2[C'_2]=(d-2)^2+4(d-1)(2d-3)=9d^2-24d+16=(3d-4)^2.$$ 
Hence, $\dim\textsf{CVS}_2[L(\mathscr{C}')]=\dim\textsf{CVS}_2[C_1]+\dim\textsf{CVS}_2[C'_2]$. The case of $lev(\mathscr{C})=2$ is then proved.

For the general case, it follows as a recursive process.
$\Box$
\end{proof}

\bigskip
Similar to the previous section, Lemma~\ref{lem4.1} and Lemma~\ref{lem5.2} can be generalized to the case of the spline space with homogeneous boundary conditions. Now we are able to prove the conjecture ~\ref{conj} over hierarchical T-meshes under  $(d-1)\times (d-1)$ tensor product subdivisions.

\begin{theorem}
   Let $\mathscr{T}$ be a hierarchical T-mesh and  $\mathscr C$ be the CVR of $\mathscr T$. 
   Suppose that the T-mesh $\mathscr T $ is obtained by subdividing a collection of $(d-1)\times (d-1)$ tensor product submeshes in each level, where the submeshes may overlap.  Then
$$\dim\bar{S}_d(\mathscr{T})=\dim\bar{S}_{d-2}(\mathscr{C}).$$
\end{theorem}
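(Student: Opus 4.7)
The plan is to reduce both $\dim\bar{S}_d(\mathscr{T})$ and $\dim\bar{S}_{d-2}(\mathscr{C})$ to a common base-level contribution plus a conformality-vector-space contribution, and then to match these contributions level by level via Lemma~\ref{lem4.1}. Concretely, I would first invoke the (homogeneous) extension of Lemma~\ref{lem3.3} on each side. For $\mathscr{T}$ this separates the tensor product mesh $\mathscr{T}_0$ at level $0$ from the union $L(\mathscr{T})$ of all l-edges introduced by the refinement, yielding $\dim\bar{S}_d(\mathscr{T})=\dim\bar{S}_d(\mathscr{T}_0)+\dim\textsf{CVS}_1[L(\mathscr{T})]$. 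Applying the same lemma (valid for T-meshes with L-nodes, as remarked after Lemma~\ref{lem4.1}) to the CVR graph $\mathscr{C}$ splits it into the tensor product base $\mathscr{C}_0$ and $L(\mathscr{C})$, giving $\dim\bar{S}_{d-2}(\mathscr{C})=\dim\bar{S}_{d-2}(\mathscr{C}_0)+\dim\textsf{CVS}_2[L(\mathscr{C})]$.

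Second, I would compare the two base terms directly. If $\mathscr{T}_0$ has $p$ horizontal and $q$ vertical l-edges (both $\ge d+1$), then $\mathscr{C}_0$ is itself a tensor product mesh with exactly $p-2$ horizontal and $q-2$ vertical l-edges, since the CVR graph keeps only crossing vertices and deletes the boundary. A direct dimension count for tensor product spline spaces then gives
$$\dim\bar{S}_d(\mathscr{T}_0)=(p-d-1)(q-d-1)=\bigl((p-2)-(d-2)-1\bigr)\bigl((q-2)-(d-2)-1\bigr)=\dim\bar{S}_{d-2}(\mathscr{C}_0).$$

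Third, I would match the two conformality vector space terms by applying the level-by-level decompositions already available. By Theorem~\ref{thm3.2} (iterated as in Theorem~\ref{thm4.2}), $\dim\textsf{CVS}_1[L(\mathscr{T})]=\sum_{i=1}^{\mathrm{lev}(\mathscr{T})}\dim\textsf{CVS}_1[T_i]$, while Lemma~\ref{lem4.3} gives the analogous decomposition $\dim\textsf{CVS}_2[L(\mathscr{C})]=\sum_{i=1}^{\mathrm{lev}(\mathscr{C})}\dim\textsf{CVS}_2[C_i]$. Since the CVR operation is local to each level (removing a fixed two endpoints from each l-edge) and preserves the $(d-1)\times(d-1)$ tensor product structure, the set $C_i$ is exactly the CVR of $T_i$. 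Lemma~\ref{lem4.1} then yields $\dim\textsf{CVS}_1[T_i]=\dim\textsf{CVS}_2[C_i]$ for each $i$, and summing over levels finishes the identity.

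The main obstacle I anticipate is justifying cleanly that the level-by-level structure of $\mathscr{C}$ mirrors that of $\mathscr{T}$ so that Lemma~\ref{lem4.1} can be applied independently at each level. This requires confirming two points: (i) that taking the CVR commutes with selecting a single level of subdivision under the $(d-1)\times(d-1)$ tensor product assumption together with the hypothesis that only interior cells are subdivided, so that no boundary l-edge of $\mathscr{T}$ contributes a spurious cross-vertex to $\mathscr{C}$ at a higher level; and (ii) that the ``isolated $(d-1)\times(d-1)$ subdivision'' case (case 2 in Lemma~\ref{lem4.1}) correctly accounts for the $\gamma$ defect on each side, which is already verified in Lemma~\ref{lem4.1} but must be tracked consistently through the recursion of Lemma~\ref{lem4.3}. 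Once these bookkeeping checks are in place, the four pieces combine to give $\dim\bar{S}_d(\mathscr{T})=\dim\bar{S}_{d-2}(\mathscr{C})$.
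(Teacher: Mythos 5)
Your proposal is correct and follows essentially the same route as the paper: both decompose $\dim\bar{S}_d(\mathscr{T})$ and $\dim\bar{S}_{d-2}(\mathscr{C})$ into a tensor-product base term plus a level-by-level sum of conformality-vector-space dimensions (via Lemma~\ref{lem3.3}, Theorem~\ref{thm3.2}/Theorem~\ref{thm4.2} and Lemma~\ref{lem4.3}), match the base terms by the direct count $(m-d-1)(n-d-1)$, and match each level's contribution using Lemma~\ref{lem4.1}. The bookkeeping concerns you flag (locality of the CVR operation per level and the treatment of isolated components) are exactly the points the paper handles by its standing assumption that only interior cells are subdivided and by the case analysis already built into Lemma~\ref{lem4.1}.
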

\begin{proof}
    By Lemma~\ref{lem4.3} in the case of $\bar{S}_d(\mathscr C)$, one has
    $$\dim\bar{S}_{d-2}(\mathscr{C})=\dim\bar{S}_{d-2}(\mathscr{C}_0)+\sum\limits_{i=1}^{lev(\mathscr{C})}\dim\textsf{CVS}_2[\bar{C}_i].$$
    where $\bar{C}_i$ is the set of all T $l$-edges of the subdivisions of $\mathscr C$ in level $i$

    Suppose $\mathscr{T}_0$ is a $m\times n $ tensor-product mesh, then  $\mathscr{C}_0$ is a $(m-2)\times(n-2) $ tensor-product mesh, and $$\dim\bar{S}_{d-2}(\mathscr{C}_0)=[(m-2)-(d-2+1)][(n-2)-(d-2+1)]=(m-d-1)(n-d-1).$$ 
    
    On the other hand, $$\dim\bar{S}_{d}(\mathscr{T}_0)=[m-(d+1)][n-(d+1)]=(m-d-1)(n-d-1),$$ 
    then 
    $$\dim\bar{S}_{d}(\mathscr{T}_0)=\dim\bar{S}_{d-2}(\mathscr{C}_0).$$

    By Lemma~\ref{lem4.1} in the case of $\bar{S}_d(\mathscr C)$ and $\bar{S}_d(\mathscr T)$, $\dim\textsf{CVS}_1[\bar{T}_i]=\dim\textsf{CVS}_2[\bar{C}_i]$. where $\bar{T}_i$ is the set of all T $l$-edges of the subdivisions of $\mathscr T$ in level $i$
    
    Thus, 
    \begin{align*}
    \dim\bar{S}_{d-2}(\mathscr{C})&=\dim\bar{S}_{d-2}(\mathscr{C}_0)+\sum\limits_{i=1}^{lev(\mathscr{C})}\dim\textsf{CVS}_2[\bar{C}_i]\\&=\dim\bar{S}_{d}(\mathscr{T}_0)+\sum\limits_{i=1}^{lev(\mathscr{T})}\dim\textsf{CVS}_2[\bar{T}_i]\\&= \dim\bar{S}_d(\mathscr{T}). 
    \end{align*}
        
    This completes the proof. $\Box$
\end{proof}

\bigskip

As a final remark,  if there are some subdivided cells which are boundary cells, the approach outlined in Lemma~\ref{lem2.1} can be implemented by subdividing all the boundary cells of $\mathscr{T}$. This will then convert the CVR graph into a regular T-mesh. Thus the above same result can obtained in this case. 

 \bigskip

 \section{Acknowledgement}
This work is supported by the Key Project of the National Natural Science Foundation of China (No. 12494550). 

\bigskip

\section{Conclusion and Future Work}
Our study investigated the dimension of the spline space $S_d(\mathscr T)$, which has the highest order of smoothness over a hierarchical T-mesh $\mathscr T$ using the smoothing cofactor conformality method. To begin, we introduced the concept of a tensor product T-connected component and derived a dimension formula for the conformality vector space through the smoothing cofactor method. Our proof showed that the dimension of the conformality vector space over a T-connected component of a hierarchical T-mesh under tensor product subdivision can be computed recursively. Thus the results in this paper  generalize previous results, as previous studies' subdivision modes could be considered as a special case of our tensor product subdivision.
Based on the above results, we obtain a dimensional formula for the spline space $S_d(\mathscr T)$ over a hierarchical T-mesh $\mathscr T$ under $(d-1)\times (d-1)$ tensor product subdivisions. Finally, we confirmed a conjecture regarding the relationship between the dimension of the spline space over a hierarchical T-mesh $\mathscr T$ and that over its CVR graph.

There are several open questions for future research. For instance, if the tensor product subdivision does not meet the condition in Theorem~\ref{thm3.2}, the dimension of the conformality vector space of the T-connected component cannot be calculated level by level, and exploring the calculation is worth considering. Another point of investigation is the dimensions of the highest-order smoothness spline space with bi-degree $(m,n)$, where $m\neq n$.

\bibliographystyle{IEEEtran}
\bibliography{T3}

% Generated by IEEEtran.bst, version: 1.14 (2015/08/26)
\begin{thebibliography}{10}
\providecommand{\url}[1]{#1}
\csname url@samestyle\endcsname
\providecommand{\newblock}{\relax}
\providecommand{\bibinfo}[2]{#2}
\providecommand{\BIBentrySTDinterwordspacing}{\spaceskip=0pt\relax}
\providecommand{\BIBentryALTinterwordstretchfactor}{4}
\providecommand{\BIBentryALTinterwordspacing}{\spaceskip=\fontdimen2\font plus
\BIBentryALTinterwordstretchfactor\fontdimen3\font minus \fontdimen4\font\relax}
\providecommand{\BIBforeignlanguage}[2]{{%
\expandafter\ifx\csname l@#1\endcsname\relax
\typeout{** WARNING: IEEEtran.bst: No hyphenation pattern has been}%
\typeout{** loaded for the language `#1'. Using the pattern for}%
\typeout{** the default language instead.}%
\else
\language=\csname l@#1\endcsname
\fi
#2}}
\providecommand{\BIBdecl}{\relax}
\BIBdecl

\bibitem{HB1}
D.~R. Forsey and R.~H. Bartels, ``Hierarchical {B}-spline refinement,'' in \emph{Proceedings of the 15th annual conference on Computer graphics and interactive techniques}, 1988, pp. 205--212.

\bibitem{HB2}
A.-V. Vuong, C.~Giannelli, B.~J{\"u}ttler, and B.~Simeon, ``A hierarchical approach to adaptive local refinement in isogeometric analysis,'' \emph{Computer Methods in Applied Mechanics and Engineering}, vol. 200, no. 49-52, pp. 3554--3567, 2011.

\bibitem{HB3}
D.~Mokri{\v{s}}, B.~J{\"u}ttler, and C.~Giannelli, ``On the completeness of hierarchical tensor-product b-splines,'' \emph{Journal of Computational and Applied Mathematics}, vol. 271, pp. 53--70, 2014.

\bibitem{HB4}
C.~Giannelli and B.~J{\"u}ttler, ``Bases and dimensions of bivariate hierarchical tensor-product splines,'' \emph{Journal of Computational and Applied Mathematics}, vol. 239, pp. 162--178, 2013.

\bibitem{ts1}
T.~W. Sederberg, J.~Zheng, A.~Bakenov, and A.~Nasri, ``T-splines and t-nurccs,'' \emph{ACM transactions on graphics (TOG)}, vol.~22, no.~3, pp. 477--484, 2003.

\bibitem{ts2}
T.~W. Sederberg, D.~L. Cardon, G.~T. Finnigan, N.~S. North, J.~Zheng, and T.~Lyche, ``T-spline simplification and local refinement,'' \emph{ACM transactions on graphics (TOG)}, vol.~23, no.~3, pp. 276--283, 2004.

\bibitem{ast}
X.~Li, J.~Zheng, T.~W. Sederberg, T.~J. Hughes, and M.~A. Scott, ``On linear independence of {T}-spline blending functions,'' \emph{Computer Aided Geometric Design}, vol.~29, no.~1, pp. 63--76, 2012.

\bibitem{deng2008}
J.~Deng, F.~Chen, X.~Li, C.~Hu, W.~Tong, Z.~Yang, and Y.~Feng, ``Polynomial splines over hierarchical {T-meshes},'' \emph{Graphical Models}, vol.~70, no.~4, pp. 76--86, 2008.

\bibitem{lr}
T.~Dokken, T.~Lyche, and K.~F. Pettersen, ``Polynomial splines over locally refined box-partitions,'' \emph{Computer Aided Geometric Design}, vol.~30, no.~3, pp. 331--356, 2013.

\bibitem{li2016survey}
X.~Li, F.~Chen, H.~Kang, and J.~Deng, ``A survey on the local refinable splines,'' \emph{Science China Mathematics}, vol.~59, pp. 617--644, 2016.

\bibitem{dim2006}
J.~Deng, F.~Chen, and Y.~Feng, ``Dimensions of spline spaces over {T-meshes},'' \emph{Journal of Computational and Applied Mathematics}, vol. 194, no.~2, pp. 267--283, 2006.

\bibitem{dim20061}
C.~Li, R.~Wang, and F.~Zhang, ``{Improvement on the dimensions of spline spaces on T-mesh},'' \emph{Journal of Information \& Computational Science}, vol.~3, no.~2, pp. 235--244, 2006.

\bibitem{MDS}
M.~J. Lai and L.~L. Schumaker, \emph{Spline functions on triangulations}.\hskip 1em plus 0.5em minus 0.4em\relax Cambridge University Press, 2007, vol. 110.

\bibitem{schumaker2012}
L.~L. Schumaker and L.~Wang, ``{Approximation power of polynomial splines on T-meshes},'' \emph{Computer Aided Geometric Design}, vol.~29, no.~8, pp. 599--612, 2012.

\bibitem{dim2014}
B.~Mourrain, ``{On the dimension of spline spaces on planar T-meshes},'' \emph{Mathematics of Computation}, vol.~83, no. 286, pp. 847--871, 2014.

\bibitem{li20063D}
X.~Li, J.~Deng, and F.~Chen, ``{Dimensions of spline spaces over 3D hierarchical T-meshes},'' \emph{Journal of Information and Computational Science}, vol.~3, no.~3, pp. 487--501, 2006.

\bibitem{zeng2018}
C.~Zeng and J.~Deng, ``{On the dimension of trivariate spline spaces with the highest order smoothness on 3D T-meshes},'' \emph{Advances in Computational Mathematics}, vol.~44, no.~2, pp. 423--451, 2018.

\bibitem{Ins2011}
X.~Li and F.~Chen, ``{On the instability in the dimension of splines spaces over T-meshes},'' \emph{Computer Aided Geometric Design}, vol.~28, no.~7, pp. 420--426, 2011.

\bibitem{Ins2012}
D.~Berdinsky, M.~Oh, T.~Kim, and B.~Mourrain, ``{On the problem of instability in the dimension of a spline space over a T-mesh},'' \emph{Computers \& Graphics}, vol.~36, no.~5, pp. 507--513, 2012.

\bibitem{tc2015}
Q.~Guo, R.~Wang, and C.~Li, ``{On the problem of instability in the dimensions of spline spaces over T-meshes with T-cycles},'' \emph{Journal of Computational Mathematics}, pp. 248--262, 2015.

\bibitem{tc2017}
C.~Li and P.~Wang, ``{The instability in the dimensions of spline spaces over T-meshes with nested T-cycles},'' \emph{Numer. Math., Theory Methods Appl}, vol.~12, no.~1, pp. 187--211, 2019.

\bibitem{huang2024stability}
B.~Huang and F.~Chen, ``On the stability of the dimensions of spline spaces with highest order of smoothness over {T}-meshes,'' \emph{Journal of Computational and Applied Mathematics}, vol. 441, p. 115681, 2024.

\bibitem{dim2016}
X.~Li and J.~Deng, ``{On the dimension of spline spaces over T-meshes with smoothing cofactor-conformality method},'' \emph{Computer Aided Geometric Design}, vol.~41, pp. 76--86, 2016.

\bibitem{jin2013}
J.~Deng, F.~Chen, and L.~Jin, ``{Dimensions of biquadratic spline spaces over T-meshes},'' \emph{Journal of Computational and Applied Mathematics}, vol. 238, pp. 68--94, 2013.

\bibitem{wu2012}
M.~Wu, J.~Xu, R.~Wang, and Z.~Yang, ``Hierarchical bases of spline spaces with highest order smoothness over hierarchical {T-subdivisions},'' \emph{Computer Aided Geometric Design}, vol.~29, no.~7, pp. 499--509, 2012.

\bibitem{zeng2015}
C.~Zeng, F.~Deng, X.~Li, and J.~Deng, ``{Dimensions of biquadratic and bicubic spline spaces over hierarchical T-meshes},'' \emph{Journal of Computational and Applied Mathematics}, vol. 287, pp. 162--178, 2015.

\bibitem{surface}
X.~Li, J.~Deng, and F.~Chen, ``Surface modeling with polynomial splines over hierarchical {T-meshes},'' \emph{The Visual Computer}, vol.~23, pp. 1027--1033, 2007.

\bibitem{li2010polynomial}
{Li, Xin}, {Deng, Jiansong}, and {Chen, Falai}, ``Polynomial splines over general {T-meshes},'' \emph{The Visual Computer}, vol.~26, pp. 277--286, 2010.

\bibitem{wang2011parallel}
J.~Wang, Z.~Yang, L.~Jin, J.~Deng, and F.~Chen, ``{Parallel and adaptive surface reconstruction based on implicit PHT-splines},'' \emph{Computer Aided Geometric Design}, vol.~28, no.~8, pp. 463--474, 2011.

\bibitem{tian2011}
L.~Tian, F.~Chen, and Q.~Du, ``Adaptive finite element methods for elliptic equations over hierarchical {T-meshes},'' \emph{Journal of Computational and Applied Mathematics}, vol. 236, no.~5, pp. 878--891, 2011.

\bibitem{nguyen2011}
N.~Nguyen-Thanh, H.~Nguyen-Xuan, S.~P.~A. Bordas, and T.~Rabczuk, ``Isogeometric analysis using polynomial splines over hierarchical {T-meshes} for two-dimensional elastic solids,'' \emph{Computer Methods in Applied Mechanics and Engineering}, vol. 200, no. 21-22, pp. 1892--1908, 2011.

\bibitem{vuong2011hierarchical}
A.-V. Vuong, C.~Giannelli, B.~J{\"u}ttler, and B.~Simeon, ``A hierarchical approach to adaptive local refinement in isogeometric analysis,'' \emph{Computer Methods in Applied Mechanics and Engineering}, vol. 200, no. 49-52, pp. 3554--3567, 2011.

\bibitem{wang2011}
P.~Wang, J.~Xu, J.~Deng, and F.~Chen, ``{Adaptive isogeometric analysis using rational PHT-splines},'' \emph{Computer Aided Design}, vol.~43, no.~11, pp. 1438--1448, 2011.

\bibitem{xu2015two}
J.~Xu, F.~Chen, and J.~Deng, ``Two-dimensional domain decomposition based on skeleton computation for parameterization and isogeometric analysis,'' \emph{Computer Methods in Applied Mechanics and Engineering}, vol. 284, pp. 541--555, 2015.

\bibitem{sm1}
L.~L. Schumaker, ``{On the dimension of spaces of piecewise polynomials in two variables},'' \emph{Multivariate Approximation Theory}, pp. 396--412, 1979.

\bibitem{sm2}
R.~Wang, \emph{Multivariate spline functions and their applications}.\hskip 1em plus 0.5em minus 0.4em\relax Springer Science \& Business Media, 2013, vol. 529.

\bibitem{deng2017bases}
F.~Deng, C.~Zeng, M.~Wu, and J.~Deng, ``Bases of biquadratic polynomial spline spaces over hierarchical t-meshes,'' \emph{Journal of Computational Mathematics}, pp. 91--120, 2017.

\bibitem{liu2023space}
J.~Liu, F.~Deng, and J.~Deng, ``Space mapping of spline spaces over hierarchical t-meshes,'' \emph{Communications in Mathematics and Statistics}, vol.~11, no.~2, pp. 403--438, 2023.

\bibitem{liu2025isogeometric}
J.~Liu, F.~Deng, H.~Ma, and J.~Deng, ``Isogeometric analysis using basis functions of splines spaces over hierarchical t-meshes with high level differences,'' \emph{Communications in Mathematics and Statistics}, vol.~13, no.~2, pp. 403--430, 2025.

\end{thebibliography}

\end{document}